\newtheorem{remark}{remark}
\title{Space-time domain decomposition method for scalar conservation laws 
\thanks{Institute of mathematics University of Neuch\^atel CH-2009 (Switzerland)}} 
\author{Souleymane Doucour\'e \thanks{(souleymane.doucoure@unine. ch)}}
\begin{document}
\maketitle

\begin{abstract}
The Space-Time Integrated Least-Squares (STILS) method is considered to analyze a space-time domain decomposition algorithm for scalar conservation laws. Continuous and discrete convergence estimates are given. 
 Next using a time-marching finite element formulation,  
 the STILS solution and its domain decomposition form are numerically compared. 
\end{abstract}

\begin{keywords}Space-Time Integrated Least-Squares (STILS), Domain Decomposition Method (DDM), Time-Marching, Scalar Conservation Laws, Steklov-Poincar\'e Equation.\end{keywords}

\begin{AMS}\end{AMS}

\pagestyle{myheadings}
\thispagestyle{plain}
\markboth{SOULEYMANE DOUCOURE}{SPACE-TIME DDM}

\section{Introduction}
The need of using domain decomposition methods (DDM) when solving partial differential equations is nowadays more and more obvious. Essentially, it consists in splitting the domain into several parts and in solving a family of problems within the subdomains: this family is equivalent to the original problem if and only if suitable conditions holds along the interface separating the subdomains. This technique is motivated essentially by numerical approximation (for instance, computational complexity can be reduced and geometrical features can be exploited thoroughly). However, the study of convergence of any discrete scheme usually needs a careful investigation of the properties of the continuous solution. The classical way of applying domain decomposition methods to evolution problems is to discretize the time dimension first uniformly over the whole domain by an implicit scheme, and to apply domain decomposition at each time step separately to solve the sequence of steady problems obtained from the implicit time discretization. Theoretical and numerical analysis of this approach with non-overlapping subdomains for hyperbolic problems as transport equation, scalar conservation laws can be found in \cite{quv,gast,gast1}. Another way to consider the domain decomposition approach is given by the overlapping Schwarz method which is applied notably to the heat equation see \cite{Meu91}, the convection-diffusion equation see \cite{Cai91} and \cite{Cai94}. The possibility to use different grids in space for each subdomain is analyzed in \cite{BGT97}. But due to the uniform time discretization, one cannot have an optimal space-time discretization. Thus this approach has a significant disadvantage to use a uniform time discretization over the entire domain and thus loses one of the main features of domain decomposition  methods, namely to treat the problem on each subdomain numerically differently, with an appropriate discretization both in time and space adapted to the subdomain problems. For these reasons space-time domain decomposition methods are mandatory, like 
Schwarz Waveform Relaxation (SWR) method, which is widely used. The scheme also uses an overlapping domain decomposition in space, like the classical Schwarz algorithm for steady problems see \cite{sch70}, but then the algorithm solves evolution problems on the subdomains and uses an iteration to converge to the solution of the original problem.  Convergence properties of the SWR method 
applied to convection dominated viscous conservation laws with nonlinear flux are given in \cite{Gand}.\\
This work is a contribution to the non-overlapping case in dealing with a new space-time DDM for scalar conservation laws without viscous terms and using the STILS method. 
The STILS approach for PDEs allows to stabilize the numerical finite element solutions.  
 \\
More generally, the method of least-squares is a standard approach to the approximate solutions of PDEs see for example in \cite{glow,boch}. This method allows to convexify noncoercive problems. The basic idea is the same in finite dimensional case: a nondegenerate system is transformed in a positive definite system. 
In \cite{hugfr}, it has allowed to perform the Stream Upwind Petrov Galerkin (SUPG) method \cite{brooh} in the Galerkin Least-Squares (GLS) method. In the same spirit,  the Streamline Diffusion and Discontinuous Galerkin \cite{john,joh} have mutated to give Characteristic Streamline Diffusion (CSD) \cite{hans}. The space-time framework is presented for example in \cite{chat,nguy}. The common principle of these methods is to stabilize the problem in adding a quadratic residual to the advective terms. 
Next the STILS method (without to add a quadratic term) is introduced in \cite{ap,azr,bessp} for the transport equation with more convincing results. An existence-uniqueness analysis of the STILS solution and his comparison with renormalized solutions have been detailed in \cite{bessop} for linear conservation laws when the velocity has a low regularity. Finally in the reference \cite{besg} the time-marching version of the STILS method has given well numerical results compared to the solutions obtained in characteristic method or in the usual  finite element methods (Galerkin, Streamline Diffusion, Shock Capturing). 
\\ 
In section $3,$ the original problem of scalar conservation laws and its two-domain form are formulated in a space-time form.  These STILS formulations with an equivalence result and an iteration-by-subdomain scheme are presented in section $4$. In section $5,$ the established results are generalized to the multi-domains case in time. 
In section $6,$ discrete convergence estimates are given with finite element methods in time and space. Finally in section $7,$ the STILS and STILS-DDM solutions are numerically compared for the Hansbo example providing in \cite{hans}. 
\section{Problem statement}
Let $\Omega\subset\mathbf{R}^d$ be a domain with a Lipschitz boundary $\partial\Omega$ satisfying the cone property, and let $T > 0$. Consider an advection velocity $u: \Omega\times]0,T[\longrightarrow\mathbf{R}^d,$ and $f\in L^2(\Omega\times]0,T[)$ a given source term. In the sequel we denote by $(.\vert.)$ the euclidian scalar product. \\
The problem consists in finding a function $c:\Omega\times]0,T[\rightarrow\mathbf{R}$ satisfying the following partial differential equation
\begin{eqnarray}
\frac{\partial c}{\partial t}  + \mathrm{div}(cu)& = & f  
\label{st1}
\end{eqnarray}
and the initial and boundary conditions 
\begin{eqnarray}
c(x,0)&=&c_0(x)\;\;\mathrm{for}\;x\;\mathrm{in}\;\;\Omega
\label{st2}\\
c(x,t)&=&c_{00}(x,t)\;\;\mathrm{for}\;x\;\mathrm{in}\;\;\partial\Omega^-\times(0,T)
\label{st3}
\end{eqnarray}
where
$\partial\Omega^-=\{ x\in\partial\Omega: (u(x,t)\vert n(x))<0 \}$, $n(x)$ is the outer normal to $\partial\Omega$ at point $x$. For the sake of the presentation, it is assumed that $\partial\Omega^-$ is not dependent on t.\\
The function $c$ represents for example a chemical concentration transported by a liquid flow in the domain $\Omega$. The velocity $u$ may be a solution of Navier-Stokes equations. It may too derive from a potential. For example in a porous medium context, $u$ will be a Darcy velocity.\\
Let us now partition $\Omega$ into two non-overlapping subdomains $\Omega_1$ and $\Omega_2$ with Lipschitz continuous boundaries $\partial\Omega_1$ and $\partial\Omega_2$. The common interface $\partial\Omega_1\cap\partial\Omega_2$ is denoted by $\Gamma$; the normal unit vector on $\Gamma$ pointing into $\Omega_2$ is denoted by $n$.
\begin{eqnarray}
\overline\Omega=\overline\Omega_1\cup\overline\Omega_2,\quad\Gamma=\partial\Omega_1\cap\partial\Omega_2,\quad\partial\Omega_i^-=\partial\Omega^-\cap\partial\Omega_i,\;i=1,2.
\label{st4}
\end{eqnarray}
Then the original problem is rewritten in a two-domains form. More precisely for $i=1,2$ we consider the problem
\begin{eqnarray}
\frac{\partial c^i}{\partial t}  + \mathrm{div}(c^iu)& = & f
\label{st5}
\end{eqnarray}
with
\begin{eqnarray}
c^i(x,0) & =&  c_{0}(x) \quad\mathrm{for}\;\;x\;\; \mathrm{in}\; \Omega_i
\label{st6}\\
c^i(x,t) & = & c_{00}(x,t) \quad\mathrm{for}\;\;x\; \mathrm{on}\; \; \partial\Omega^-_i\times(0,T)
\label{st7}
\end{eqnarray}
and the following interface condition
\begin{eqnarray}
c^1&=&c^2\quad\mathrm{for}\;\;x\;\;\mathrm{on}\;\Gamma\times(0,T).
\label{st8}
\end{eqnarray}

The equivalence between the two above problems is proved in the steady case (see \cite{gast,gast1}).\\
The originality of the STILS method is to solve the equation (\ref{st1}) in both directions (space and time) together and in $L^2$ sense. 
In the next sections the principles of the STILS method \cite{ap,azr,bessp,bessop,besg} are recalled for the equations (\ref{st1})-(\ref{st3})
, and its two-domains form (\ref{st5})-(\ref{st8}) is also equivalently formulated.
\section{Space-time framework}
In a space-time description, time is just seen as the (d+1)-th space dimension. Introduce the following space-time objects:
the domain $Q= \Omega\times]0,T[$, with
the boundary $\left(\partial\Omega\times]0,T[\right)\cup\left(\Omega\times\{T\}\right)\cup\left(\Omega\times\{0\}\right)$, and
the outer-normal $\widetilde n$ is given by $$\widetilde n=\left\{
  \begin{tabular}{ll}
  $(n,0)  \;\; \; \;\;\;\; \mathrm{on} \quad \partial\Omega\times]0,T[ $\\
  $(0,-1) \quad\; \mathrm{on}\quad \Omega\times\{0\} $\\
  $(0,1)\quad\;\;\;\; \mathrm{on}\quad  \Omega\times\{T\}$
  \end{tabular}
  \right.$$
 and the space-time velocity is $\widetilde u=(u_1,...,u_d,1)^t$. The space-time inflow boundary is
 $$\partial Q^-=\{(x,t)\in\partial Q , (\widetilde u\vert \widetilde n)<0\}=\left(\partial\Omega^-\times]0,T[\right)\cup\left(\Omega\times\{0\}\right).$$
   In this context, initial condition and inflow condition become 
   $$ c_b(x,t)=\left\{
  \begin{tabular}{ll}
  $c_0(x) \quad if \quad (x,t)\in\Omega\times\{0\} $\\
  $c_{00}(x,t) \quad if \quad (x,t)\in\partial\Omega^-\times(0,T).$
  \end{tabular}
  \right.$$
  For a sufficiently regular function $\varphi$ defined on $Q$, $\nabla_t\varphi=\left(\frac{\partial\varphi}{\partial x_1},\frac{\partial\varphi}{\partial x_2},\cdots,\frac{\partial\varphi}{\partial x_d},\frac{\partial\varphi}{\partial t}\right)$ is the space-time gradient, and $\nabla_t\cdot(\varphi\widetilde u)=\frac{\partial\varphi}{\partial t}+\displaystyle\sum_{i=1}^d\frac{\partial}{\partial x_i}(u_i\varphi).$
   With the above notations, the problem (\ref{st1})-(\ref{st3}) transforms in space-time form
   \begin{eqnarray} 
  \nabla_t\cdot( c\widetilde u)&=& f
\label{st9}\\
c& = & c_b \quad \mathrm{for}\;(x,t)\;\;\mathrm{on}\; \partial Q^-.
\label{st10}
\end{eqnarray}
And the space-time counterpart of the two-domains problem (\ref{st5})-(\ref{st8}) is given by
  \begin{eqnarray} 
 \nabla_t\cdot( c^1\widetilde u)&=& {f}
\label{st11}\\
c^1& = & c_{b,1} \quad \mathrm{for}\;(x,t)\;\;\mathrm{on}\;\; \partial Q^-_1
\label{st12}\\
c^1&=&c^2  \;\quad\; \mathrm{for}\;(x,t)\;\;\mathrm{on}\;\; \widetilde\Gamma 
\label{st13}\\
 \nabla_t\cdot( c^2\widetilde u)&=& f
\label{st14}\\
c^2& = & c_{b,2} \quad \mathrm{for}\;(x,t)\;\;\mathrm{on}\;\; \partial Q^-_2
\label{st15}
\end{eqnarray}
where (see fig. \ref{fig1})
$$Q_i=\Omega_i\times(0,T),\; 
\overline Q=\overline Q_1\cup\overline Q_2,\;
\partial Q_i^-=\partial Q^-\cap\partial Q_i,$$
\begin{eqnarray}
\nonumber
\widetilde\Gamma\;\;&=&\partial Q_1\cap\partial Q_2=\widetilde\Gamma^-\cup\widetilde\Gamma^+\cup\widetilde\Gamma^0\\\nonumber
\widetilde\Gamma^-&=&
\{ (x,t)\in\widetilde\Gamma: (\widetilde u(x,t)\vert\widetilde n(x,t))<0 \}=\widetilde\Gamma^-_1=\widetilde\Gamma^+_2
\nonumber
\\
\widetilde\Gamma^+&=&
\{ (x,t)\in\widetilde\Gamma: (\widetilde u(x,t)\vert\widetilde n(x,t))>0 \}=\widetilde\Gamma^-_2=\widetilde\Gamma^+_1
\nonumber
\\
\widetilde\Gamma^0\;&=&\{(x,t)\in\widetilde\Gamma: (\widetilde u(x,t)\vert\widetilde n(x,t))=0 \}\nonumber
\end{eqnarray}
  \begin{figure}[H]
\begin{center}
\includegraphics[width=6.cm]{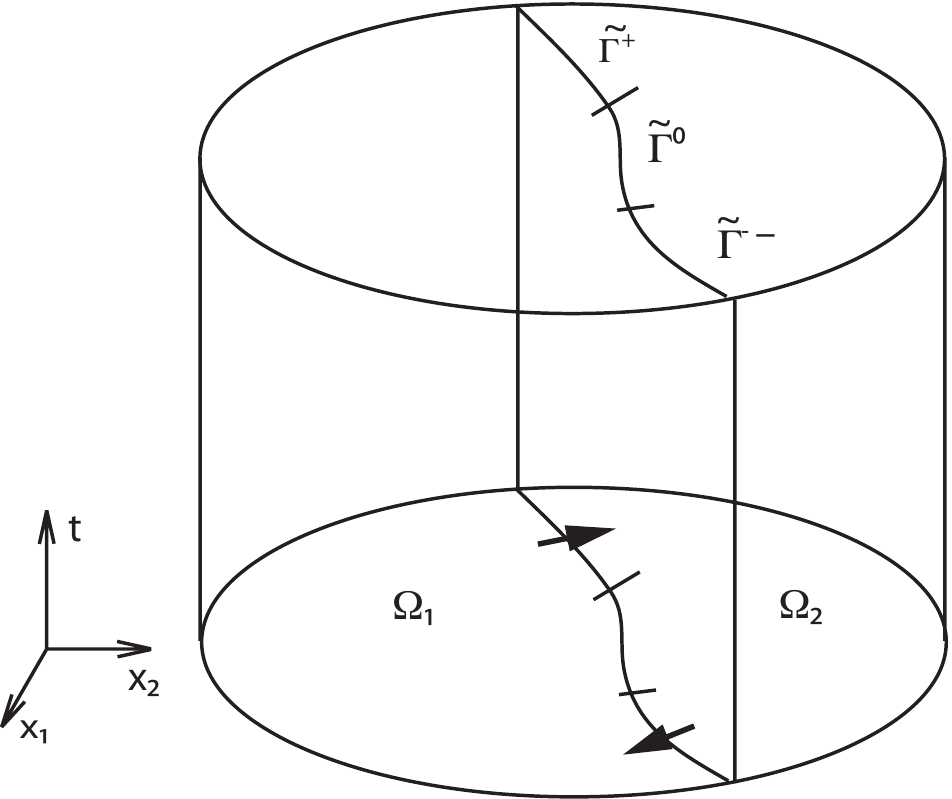}
\caption{Decomposition of $Q$ and explanation of the symbology}
\label{fig1}
\end{center}
\end{figure}


In the sequel the velocity has the following regularity
\begin{eqnarray}
u\in\;L^\infty(Q)^d,\;\nabla\cdot u\in\;L^\infty(Q).
\label{st16}
\end{eqnarray}
The following theorem is proved in \cite{chen}.
\begin{theorem}
\label{chen_frid}
Under the assumption (\ref{st16}) the normal trace of $u,$ $(\widetilde u\vert \widetilde n)$ is in $L^\infty(\partial Q)$.
\label{th1}
\end{theorem}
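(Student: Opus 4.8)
The plan is to view $\widetilde u$ as a bounded vector field on the $(d+1)$-dimensional domain $Q$ whose space-time divergence is again bounded, and to use the theory of divergence-measure fields to realize its normal trace on $\partial Q$ as a genuine $L^\infty$ function rather than merely as a distribution. First I would record that, by (\ref{st16}) and $\widetilde u=(u_1,\dots,u_d,1)^t$, one has $\widetilde u\in L^\infty(Q)^{d+1}$ and $\nabla_t\cdot\widetilde u=\nabla\cdot u\in L^\infty(Q)$, the last component of $\widetilde u$ being constant. Hence $\widetilde u$ is a bounded divergence-measure field on $Q$, so the space-time Gauss--Green identity
$$\int_Q \varphi\,\nabla_t\cdot\widetilde u\;dz+\int_Q \widetilde u\cdot\nabla_t\varphi\;dz=\big\langle(\widetilde u\vert\widetilde n),\varphi\big\rangle_{\partial Q}$$
holds for every $\varphi\in C^1(\overline Q)$ with a functional $(\widetilde u\vert\widetilde n)$ that a priori lies only in the dual of a trace space on $\partial Q$; the content of the theorem is that this functional in fact belongs to $L^\infty(\partial Q)$.

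To obtain this I would localize. Since $\partial\Omega$ is Lipschitz and satisfies the cone property, $Q=\Omega\times(0,T)$ is a bounded Lipschitz domain of $\mathbf{R}^{d+1}$, so by a finite partition of unity subordinate to a covering of $\partial Q$ it suffices to treat the model configuration in which, after a bi-Lipschitz change of coordinates (writing $z=(z',z_{d+1})$ for points of $\mathbf{R}^{d+1}$), $Q$ is locally the region $\{z_{d+1}>\gamma(z')\}$ with $\gamma$ Lipschitz, and $\widetilde u$ is replaced by a field enjoying the same $L^\infty$ bound and the same $L^\infty$ bound on its divergence (a bi-Lipschitz map has Jacobian bounded above and below). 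For $h>0$ small, consider the interior hypersurfaces $\Sigma_h=\{z_{d+1}=\gamma(z')+h\}$: by Fubini the slices $z'\mapsto\widetilde u(z',\gamma(z')+h)$ are defined for a.e.\ $h$ with $|\widetilde u(z',\gamma(z')+h)|\le\|\widetilde u\|_{L^\infty(Q)}$, and testing the Gauss--Green identity against functions that are plateaus in the $z_{d+1}$-direction shows that $(\widetilde u\vert\widetilde n)$, viewed as a functional on $C^1$ boundary data, is the weak-$\ast$ limit as $h\downarrow 0$ of the functionals $\psi\mapsto\int \widetilde u(z',\gamma(z')+h)\cdot(\nabla\gamma(z'),-1)\,\psi(z')\;dz'$. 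Since the surface measure on the graph is $\sqrt{1+|\nabla\gamma(z')|^2}\;dz'$ and $|\widetilde u\cdot(\nabla\gamma,-1)|\le\|\widetilde u\|_{L^\infty(Q)}\sqrt{1+|\nabla\gamma|^2}$, this limiting functional is represented by integration against a surface density bounded by $\|\widetilde u\|_{L^\infty(Q)}$; hence $(\widetilde u\vert\widetilde n)\in L^\infty(\partial Q)$ with $\|(\widetilde u\vert\widetilde n)\|_{L^\infty(\partial Q)}\le\|\widetilde u\|_{L^\infty(Q)}$.

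The main obstacle is precisely this last passage: upgrading the distribution $(\widetilde u\vert\widetilde n)$ to a bounded linear functional on $C(\partial Q)$ and identifying the representing Radon measure with a bounded surface density. This is where the hypothesis $\nabla\cdot u\in L^\infty(Q)$ (and not merely a bounded measure) is convenient, for then the flux of $\widetilde u$ through the thin layer between the graph and $\Sigma_h$ is bounded by $\|\nabla\cdot u\|_{L^\infty(Q)}\,\big|\{\gamma(z')<z_{d+1}<\gamma(z')+h\}\big|$, which tends to $0$, so that the $\Sigma_h$-fluxes converge with no further argument; note that the particular structure $\widetilde u_{d+1}\equiv 1$ is irrelevant beyond guaranteeing $\widetilde u\in L^\infty$ and $\nabla_t\cdot\widetilde u\in L^\infty$. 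The remaining points---that flattening and the partition of unity preserve the two bounds, and that the weak-$\ast$ limit does not depend on the chosen exhausting family $\{\Sigma_h\}$---are routine and are carried out in \cite{chen}.
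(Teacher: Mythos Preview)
Your argument is correct and is essentially the Chen--Frid approach: the paper does not supply its own proof of this statement but simply cites \cite{chen}, and what you have written is a faithful sketch of the divergence-measure field technique developed there (bounded field with bounded divergence, Gauss--Green identity, localization in Lipschitz charts, identification of the normal trace as the weak-$\ast$ limit of uniformly bounded interior normal fluxes). Nothing further is needed.
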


\section{STILS method}
\subsection{Global formulation}
For $\varphi\in\mathcal{D}(\overline Q)$, consider the norm $$\Vert \varphi\Vert_{H(u,Q)}=\left(\Vert\varphi\Vert_{L^2(Q)}^2+\Vert \nabla_t\cdot(\widetilde u\varphi) \Vert^2_{L^2(Q)}+\int_{\partial Q^-}\vert(\widetilde u\vert\widetilde n)\vert\varphi^2\mathrm{d}\tilde\sigma \right)^{\frac{1}{2}} $$ see \cite{bessop}. Define the anisotropic Sobolev space $H(u,Q)$ as the closure of $\mathcal{D}(\overline Q)$ for this norm $$H(u,Q)=\overline{\mathcal{D}(\overline Q)}^{H(u,Q)}.$$ 
Under the assumption (\ref{st16}), there exists a linear continuous trace operator \cite{bessop}
\begin{eqnarray}
\nonumber
 \gamma^-:H(u,Q)&\rightarrow& L^2(\partial Q^-,\vert(\widetilde u\vert\widetilde n)\vert\mathrm{d}\widetilde\sigma)\\\nonumber\varphi&\mapsto&\gamma^-(\varphi)={\varphi_\mid}_{\partial Q^-}.
 \nonumber
\end{eqnarray}
Finally define the spaces
\begin{eqnarray}
\nonumber
H_0(u,Q,\partial Q^-)&=&\{\varphi\in H(u,Q),\, {\varphi_\mid}_{\partial Q^-}=0\}\\
\nonumber
V^-&=&\{\varphi\in H(u,Q),\;\gamma^-(\varphi)\in L^2(\partial Q^-,\vert(\widetilde u\vert\widetilde n)\vert\mathrm{d}\widetilde\sigma)\}\\
\nonumber
G^-&=&\gamma^-(V^-).
\end{eqnarray}
The following theorem is proved in \cite{bessp,bessop}.
\begin{theorem}(Curved-Poincar\'e inequality)\label{poincare}\\
If $u\in L^\infty(Q)^d$ and $\nabla\cdot u\in L^\infty(Q)$, the semi-norm on $H(u,Q)$ defined by $$\vert\varphi\vert_{H(u,Q)}= \Big(\int_Q(\nabla_t\cdot(\widetilde u\varphi))^2\;\mathrm{dxdt} +\int_{\partial Q^-}\vert(\widetilde u\vert\widetilde n)\vert\varphi^2\mathrm{d}\widetilde\sigma\Big)^{\frac{1}{2}}$$
is a norm equivalent to the norm on $H(u,Q).$
\label{th2}
\end{theorem}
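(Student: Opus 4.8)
One direction is free: since $\Vert\varphi\Vert_{H(u,Q)}^2=\Vert\varphi\Vert_{L^2(Q)}^2+\vert\varphi\vert_{H(u,Q)}^2$, we have $\vert\varphi\vert_{H(u,Q)}\le\Vert\varphi\Vert_{H(u,Q)}$ for every $\varphi$, so the whole statement — including that $\vert\cdot\vert_{H(u,Q)}$ is actually a norm — reduces to a Poincar\'e-type bound $\Vert\varphi\Vert_{L^2(Q)}\le C\,\vert\varphi\vert_{H(u,Q)}$ with $C$ independent of $\varphi$. I would establish this first for $\varphi\in\mathcal{D}(\overline Q)$ and then extend to all of $H(u,Q)$ by density, which is legitimate because $\varphi\mapsto\vert\varphi\vert_{H(u,Q)}$ is continuous for $\Vert\cdot\Vert_{H(u,Q)}$: the $L^2$ part is obvious, and the boundary part is controlled using continuity of $\gamma^-$ into $L^2(\partial Q^-,\vert(\widetilde u\vert\widetilde n)\vert\mathrm{d}\widetilde\sigma)$ together with Theorem~\ref{th1}, which ensures $(\widetilde u\vert\widetilde n)\in L^\infty(\partial Q)$. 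Since $u$ is only assumed $L^\infty$, the characteristic curves of $\widetilde u$ need not exist, so rather than integrating along them I would run a weighted energy argument.

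\emph{The key computation.} For $\varphi\in\mathcal{D}(\overline Q)$, set $w=\nabla_t\cdot(\widetilde u\varphi)$ and use the pointwise identity $w\,\varphi=\frac{1}{2}\nabla_t\cdot(\widetilde u\varphi^2)+\frac{1}{2}(\nabla\cdot u)\varphi^2$, valid because $\nabla_t\cdot\widetilde u=\nabla\cdot u$ (the last component of $\widetilde u$ is $1$). Multiply by $e^{-\lambda t}$, with $\lambda>0$ to be fixed, integrate over $Q$, and apply the space-time divergence theorem to $\int_Q e^{-\lambda t}\nabla_t\cdot(\widetilde u\varphi^2)\,\mathrm{dxdt}$. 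Since $\widetilde u\cdot\nabla_t(e^{-\lambda t})=-\lambda e^{-\lambda t}$ — again using that the last slot of $\widetilde u$ equals $1$ — this produces the coercive term $\lambda\int_Q e^{-\lambda t}\varphi^2$ plus the boundary integral $\int_{\partial Q}e^{-\lambda t}(\widetilde u\vert\widetilde n)\varphi^2\,\mathrm{d}\widetilde\sigma$. Splitting $\partial Q$ along the sign of $(\widetilde u\vert\widetilde n)$ (the zero set contributes nothing), the outflow part has the favorable sign and is discarded, the inflow part equals $-\int_{\partial Q^-}\vert(\widetilde u\vert\widetilde n)\vert\varphi^2$, and the zero-order term $\frac{1}{2}\int_Q(\nabla\cdot u)\varphi^2$ is absorbed by taking $\lambda=\Vert\nabla\cdot u\Vert_{L^\infty(Q)}+2$.

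\emph{Conclusion.} Rearranging yields $\int_Q e^{-\lambda t}\varphi^2\le\int_Q e^{-\lambda t}\vert w\vert\,\vert\varphi\vert+\frac{1}{2}\int_{\partial Q^-}e^{-\lambda t}\vert(\widetilde u\vert\widetilde n)\vert\varphi^2$; using $e^{-\lambda T}\le e^{-\lambda t}\le1$ on $[0,T]$, the Cauchy--Schwarz and Young inequalities on $\int_Q\vert w\vert\,\vert\varphi\vert$, and absorbing the resulting $\frac{1}{2}\Vert\varphi\Vert_{L^2(Q)}^2$ into the left-hand side, one obtains $\Vert\varphi\Vert_{L^2(Q)}^2\le e^{2\lambda T}\vert\varphi\vert_{H(u,Q)}^2$, hence $\Vert\varphi\Vert_{H(u,Q)}^2\le(1+e^{2\lambda T})\vert\varphi\vert_{H(u,Q)}^2$; density then transfers the estimate to $H(u,Q)$ and the two norms are equivalent. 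I expect the only genuine subtlety to be the rigorous justification of the integration by parts and of the boundary traces for a non-smooth velocity: this is handled by first working with smooth $\varphi$ on the Lipschitz domain $Q$, where the divergence theorem is classical and the boundary term is meaningful thanks to Theorem~\ref{th1}, and then passing to the limit via the continuity of $\gamma^-$.
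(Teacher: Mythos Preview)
Your argument is correct. One small imprecision: as written, after replacing $e^{-\lambda t}$ by its bounds you try to absorb an unweighted $\tfrac12\Vert\varphi\Vert_{L^2}^2$ into $e^{-\lambda T}\Vert\varphi\Vert_{L^2}^2$, which fails when $e^{-\lambda T}<\tfrac12$. The fix is immediate --- either run Cauchy--Schwarz and Young in the weighted measure $e^{-\lambda t}\,\mathrm{dxdt}$, or use Young with parameter $e^{-\lambda T}$ --- and both routes deliver exactly the constant $e^{2\lambda T}$ you announce (in fact $e^{\lambda T}$ with the first route).

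The paper does not prove Theorem~\ref{poincare}; it defers to \cite{bessp,bessop}. The only clue to the method used there is the remark following the local version (Theorem~\ref{poincare_loc}), where the constant ${\cal C}(T)=\sqrt{\max(4T^2,2T)}$ is quoted in the divergence-free case. That polynomial dependence on $T$ points to a \emph{linear} weight $(T-t)$ in place of your exponential $e^{-\lambda t}$: since $\widetilde u\cdot\nabla_t(T-t)=-1$, integrating $\int_Q(T-t)\,\nabla_t\cdot(\widetilde u\varphi^2)$ by parts produces $\tfrac12\Vert\varphi\Vert_{L^2(Q)}^2$ directly, the top face $\Omega\times\{T\}$ drops out, and after Young's inequality one lands precisely on $\Vert\varphi\Vert_{L^2}^2\le 4T^2\Vert\nabla_t\cdot(\widetilde u\varphi)\Vert_{L^2}^2+2T\int_{\partial Q^-}\vert(\widetilde u\vert\widetilde n)\vert\varphi^2$, i.e.\ the quoted constant. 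So the two proofs share the same integration-by-parts skeleton but differ in the multiplier: your exponential weight absorbs the zero-order term $(\nabla\cdot u)\varphi^2$ for arbitrary $\Vert\nabla\cdot u\Vert_{L^\infty}$ in one shot, at the price of a constant growing like $e^{(\Vert\nabla\cdot u\Vert_{L^\infty}+2)T}$; the linear weight gives the sharper polynomial constant when $\nabla\cdot u=0$, but in the general case would need either a smallness condition on $T\Vert\nabla\cdot u\Vert_{L^\infty}$ or a subdivision/Gronwall step.
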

\\

A solution of equation (\ref{st1}) in $L^2$ corresponds to a minimizer in $\{\varphi\in H(u,Q);\;\gamma^-(\varphi)-c_b=0\}$ of the following convex, H(u,Q)-coercive functional
\begin{eqnarray}
J(c)=\frac{1}{2}\left(\int_Q \left(\nabla_t\cdot(\widetilde uc)-f\right)^2\;\mathrm{dxdt}-\int_{\partial Q^-}c^2(\widetilde u\vert\widetilde n)\mathrm{d}\tilde\sigma\right).
\label{st17}
\end{eqnarray}
Its Gateaux derivate is 
\begin{eqnarray}
 [DJ(c)](\varphi)=\int_Q\left(\nabla_t\cdot(\widetilde uc)-f\right)\nabla_t\cdot(\widetilde u\varphi)\;\mathrm{dxdt} -\int_{\partial Q^-}c\varphi(\widetilde u\vert\widetilde n)\mathrm{d}\widetilde\sigma.
\label{st18}
\end{eqnarray}
Therefore a sufficient condition to get the least-squares solution of (\ref{st1})-(\ref{st3}) is the following weak formulation: if $c_b\in G^-,$ find $c\in H(u,Q),\;\gamma^-(c)=c_b$ such that
\begin{eqnarray}
\begin{array}{lcr}
\int_Q \nabla_t\cdot(\widetilde uc)\nabla_t\cdot(\widetilde u\varphi)\;\mathrm{dxdt} =\int_Q f\;\nabla_t\cdot(\widetilde u\varphi)\;\mathrm{dxdt}
\quad\forall \varphi\in H_0(u,Q,\partial Q^-).
\label{st19}
\end{array}
\end{eqnarray}

The maximum principle and weak-solution results proved in \cite{bessop} are given by
\begin{theorem}(Maximum principle)\label{princ_max}
Assume that $\Omega$ is bounded, and the function $f=0$ in (\ref{st1}). Let  $c_b\in G^-\cap L^\infty(\partial Q^-)$, if $\nabla\cdot u=0$ the solution c of  (\ref{st19}) satisfies:
\begin{eqnarray}
\inf c_b \leq c \leq \sup c_b.\nonumber
\end{eqnarray}
\label{th3}
\end{theorem}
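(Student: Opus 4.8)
The plan is to establish the maximum principle by testing the weak formulation (\ref{st19}) against a cleverly chosen truncation of $c$, exploiting the homogeneity $f=0$ and the divergence-free condition $\nabla\cdot u=0$. First I would reduce to showing $c\le \sup c_b =: M$; the lower bound follows by applying the argument to $-c$ with data $-c_b$. Set $w=(c-M)^+$. The key observation is that, because $\nabla\cdot u=0$, we have $\nabla_t\cdot(\widetilde u\varphi)=\widetilde u\cdot\nabla_t\varphi$ pointwise for smooth $\varphi$, and this is a transport (directional derivative) operator along the space-time characteristics of $\widetilde u$. Consequently $\nabla_t\cdot(\widetilde u w)=\mathbf{1}_{\{c>M\}}\,\nabla_t\cdot(\widetilde u c)$ a.e. — the chain rule for the composition of the Lipschitz truncation with an $H(u,Q)$ function — so in particular $w\in H_0(u,Q,\partial Q^-)$: its inflow trace vanishes since on $\partial Q^-$ one has $c=c_b\le M$, hence $\gamma^-(w)=(\gamma^-(c)-M)^+=0$.

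Next I would take $\varphi=w$ in (\ref{st19}). With $f=0$ the right-hand side vanishes, and on the left-hand side $\nabla_t\cdot(\widetilde u c)\,\nabla_t\cdot(\widetilde u w)=\mathbf{1}_{\{c>M\}}(\nabla_t\cdot(\widetilde u c))^2=(\nabla_t\cdot(\widetilde u w))^2$ a.e., so we obtain
\begin{eqnarray}
\int_Q\bigl(\nabla_t\cdot(\widetilde u w)\bigr)^2\,\mathrm{dxdt}=0.\nonumber
\end{eqnarray}
Combining this with $\gamma^-(w)=0$ shows that the Curved–Poincar\'e seminorm $|w|_{H(u,Q)}$ vanishes; by Theorem \ref{th2} this seminorm is an equivalent norm on $H(u,Q)$, whence $w=0$ in $L^2(Q)$. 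Thus $(c-M)^+=0$, i.e. $c\le M$ a.e., and symmetrically $c\ge \inf c_b$, which is the claim.

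The main obstacle is the rigorous justification of the chain rule, i.e. that $w=(c-M)^+$ genuinely belongs to $H(u,Q)$ with $\nabla_t\cdot(\widetilde u w)=\mathbf{1}_{\{c>M\}}\nabla_t\cdot(\widetilde u c)$, since elements of $H(u,Q)$ are only limits of $\mathcal{D}(\overline Q)$ functions in the graph norm and need not be Sobolev-regular. I would handle this by a density/approximation argument: approximate $c$ by smooth $c_n\to c$ in $H(u,Q)$, regularize the truncation by a smooth convex function $\beta_\varepsilon$ with $\beta_\varepsilon(s)\to (s-M)^+$ and $\beta_\varepsilon'$ bounded, use that $\nabla_t\cdot(\widetilde u\,\beta_\varepsilon(c_n))=\beta_\varepsilon'(c_n)\,\nabla_t\cdot(\widetilde u c_n)$ holds classically when $\nabla\cdot u=0$, and pass to the limit first in $n$ (controlling the boundary term via the continuity of $\gamma^-$ and the $L^\infty$ bound on $(\widetilde u\vert\widetilde n)$ from Theorem \ref{th1}) and then in $\varepsilon$. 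This renormalization step is precisely where the low-regularity analysis of \cite{bessop} is invoked; once it is in place, the energy identity and the Poincar\'e inequality close the proof immediately.
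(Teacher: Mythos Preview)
The paper does not actually prove this theorem; it only quotes the result from \cite{bessop}. Your Stampacchia-type truncation argument---test (\ref{st19}) with $w=(c-M)^+$, use that $\nabla\cdot u=0$ turns $\nabla_t\cdot(\widetilde u\,\cdot)$ into a pure directional derivative so that the chain rule $\nabla_t\cdot(\widetilde u w)=\mathbf{1}_{\{c>M\}}\nabla_t\cdot(\widetilde u c)$ holds, and conclude $w=0$ from the Curved--Poincar\'e inequality---is correct and is the standard route for this kind of least-squares transport formulation. You have correctly isolated the only delicate step, namely that the truncation $(c-M)^+$ lies in $H(u,Q)$ with the expected directional derivative and inflow trace; your regularization-plus-density outline is precisely the renormalization technique that \cite{bessop} develops for $u\in L^\infty$ with bounded divergence, so invoking that reference there is appropriate and closes the argument.
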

\begin{theorem}(Weak-solution)\label{sol_fa}
For $f\in L^\infty(Q)$ and $u\in L^1(0,T;BV(\Omega)^d)$ with $\nabla\cdot u=0,$ let $c\in H_0(u,Q,\partial Q^-)$ be the solution of $$\int_Q \nabla_t\cdot(\tilde uc)\nabla_t\cdot(\widetilde u\varphi)\;\mathrm{dxdt} =\int_Q f\nabla_t\cdot(\widetilde u\varphi)\;\mathrm{dxdt}\;\forall \varphi\in H_0(u,Q,\partial Q^-)$$
then it is a weak solution, i.e: 
\begin{eqnarray}
\int_Q \nabla_t\cdot(\widetilde uc)\varphi\;\mathrm{dxdt} =\int_Q f\varphi\;\mathrm{dxdt}\quad \forall \varphi\in L^2(Q).
\label{th4}\nonumber
\end{eqnarray}
\end{theorem}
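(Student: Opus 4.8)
The plan is to deduce the weak-solution identity from the least-squares identity by making a careful choice of test function. The least-squares formulation tells us that
$$
\int_Q \nabla_t\cdot(\widetilde uc)\,\nabla_t\cdot(\widetilde u\varphi)\,\mathrm{dxdt}=\int_Q f\,\nabla_t\cdot(\widetilde u\varphi)\,\mathrm{dxdt}\qquad\forall\varphi\in H_0(u,Q,\partial Q^-),
$$
which can be rewritten as $\int_Q\big(\nabla_t\cdot(\widetilde uc)-f\big)\,\nabla_t\cdot(\widetilde u\varphi)\,\mathrm{dxdt}=0$. Set $w:=\nabla_t\cdot(\widetilde uc)-f\in L^2(Q)$. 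What we must show is that $\int_Q w\,\psi\,\mathrm{dxdt}=0$ for every $\psi\in L^2(Q)$, i.e. that $w=0$ in $L^2(Q)$; the weak-solution identity is then precisely $w=0$ tested against arbitrary $\psi\in L^2(Q)$. So the real content is: the range of the map $\varphi\mapsto\nabla_t\cdot(\widetilde u\varphi)$ from $H_0(u,Q,\partial Q^-)$ into $L^2(Q)$ is dense (in fact, I expect it to be all of $L^2(Q)$), hence the orthogonality $w\perp\operatorname{range}$ forces $w=0$.

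First I would establish surjectivity of $\varphi\mapsto\nabla_t\cdot(\widetilde u\varphi)$ onto $L^2(Q)$ with homogeneous inflow data: given $g\in L^2(Q)$, solve the linear transport problem $\nabla_t\cdot(\widetilde u\varphi)=g$ in $Q$ with $\varphi=0$ on $\partial Q^-$. Under the hypotheses $f\in L^\infty(Q)$ is not needed here; what is needed is $u\in L^1(0,T;BV(\Omega)^d)$ with $\nabla\cdot u=0$, which is exactly the DiPerna–Lions / Ambrosio well-posedness regime for the linear continuity equation (the divergence-free condition makes $\nabla_t\cdot(\widetilde u\varphi)=\widetilde u\cdot\nabla_t\varphi$, a genuine transport operator along the space-time characteristics, which here are transversal to the inflow boundary $\partial Q^-$). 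This gives a unique renormalized solution $\varphi$, and one checks $\varphi\in H(u,Q)$: indeed $\varphi\in L^2(Q)$ by an energy/Gronwall estimate along characteristics, $\nabla_t\cdot(\widetilde u\varphi)=g\in L^2(Q)$ by construction, and the trace on $\partial Q^-$ vanishes, so $\varphi\in H_0(u,Q,\partial Q^-)$. One may also invoke Theorem \ref{poincare} to control $\Vert\varphi\Vert_{H(u,Q)}$ by $\Vert g\Vert_{L^2(Q)}$.

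Second, with surjectivity in hand, plug such $\varphi$ (with $\nabla_t\cdot(\widetilde u\varphi)=\psi$) into the least-squares identity: the left side becomes $\int_Q\nabla_t\cdot(\widetilde uc)\,\psi\,\mathrm{dxdt}$ and the right side $\int_Q f\,\psi\,\mathrm{dxdt}$, giving exactly $\int_Q\nabla_t\cdot(\widetilde uc)\,\psi\,\mathrm{dxdt}=\int_Q f\,\psi\,\mathrm{dxdt}$ for all $\psi\in L^2(Q)$, which is the claim. I expect the main obstacle to be the rigorous justification of Step one: constructing $\varphi\in H_0(u,Q,\partial Q^-)$ solving $\nabla_t\cdot(\widetilde u\varphi)=g$ for the rough velocity field $u\in L^1(0,T;BV(\Omega)^d)$. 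This is where the $BV$ regularity and the divergence-free assumption are essential (to apply Ambrosio's renormalization theory and to guarantee that the flow is well-defined, measure-preserving, and that the characteristics reach the inflow boundary so the zero boundary condition propagates correctly). Everything downstream of that construction is a one-line substitution.
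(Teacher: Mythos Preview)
The paper does not prove this theorem; it is quoted from \cite{bessop}, so there is no in-paper argument to compare against directly. Your strategy is correct and is in the spirit of the argument in \cite{bessop}: write $w:=\nabla_t\cdot(\widetilde uc)-f$, observe that the least-squares identity says $w$ is $L^2$-orthogonal to the range of $\varphi\mapsto\nabla_t\cdot(\widetilde u\varphi)$ on $H_0(u,Q,\partial Q^-)$, and show this range exhausts $L^2(Q)$ by solving the inflow-homogeneous transport problem $\nabla_t\cdot(\widetilde u\varphi)=g$ via DiPerna--Lions/Ambrosio renormalization for divergence-free $BV$ fields. You have correctly isolated the only substantive step and the right tool for it.

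One point deserves to be made fully explicit. Since $H(u,Q)$ is defined as the \emph{closure} of $\mathcal D(\overline Q)$, it is not enough to produce a renormalized solution $\varphi$ with $\varphi\in L^2(Q)$, $\nabla_t\cdot(\widetilde u\varphi)=g\in L^2(Q)$ and vanishing inflow trace: that only places $\varphi$ in the graph space $W(u,Q)$, and you must still argue that $\varphi\in H_0(u,Q,\partial Q^-)$. This density step is where the $BV$ regularity and the hypothesis $f\in L^\infty(Q)$ are genuinely used---the latter keeps the relevant solutions bounded, so that the identification $H(u,Q)\cap L^\infty(Q)=W(u,Q)\cap L^\infty(Q)$ recorded immediately after the theorem can be invoked. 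A slight streamlining along these lines: rather than proving surjectivity for every $g\in L^2(Q)$, solve once with right-hand side $f\in L^\infty(Q)$, obtain a bounded renormalized solution $\phi\in H_0(u,Q,\partial Q^-)$ with $\nabla_t\cdot(\widetilde u\phi)=f$ a.e., note that $\phi$ then trivially satisfies the STILS variational identity, and conclude $c=\phi$ by uniqueness of the STILS solution.
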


For $c_b\in G^-$, let $C_b\in H(u,Q)$ be such that $\gamma^-(C_b)=c_b.$ Then $\rho=c-C_b\in H_0(u,Q,\partial Q^-)$ is the unique solution (Curved-Poincar\'e  inequality (theorem \ref{poincare}) and Lax-Milgram lemma) of
\begin{eqnarray}
\hspace{9mm}\int_Q \nabla_t\cdot(\widetilde u\rho)\nabla_t\cdot(\widetilde u\varphi)\;\mathrm{dxdt} =\int_Q f\nabla_t\cdot(\widetilde u\varphi)\;\mathrm{dxdt}-\int_Q \nabla_t\cdot(\widetilde uC_b)\nabla_t\cdot(\widetilde u\varphi)\;\mathrm{dxdt}
\label{st20}
\end{eqnarray}
$\forall \varphi\in H_0(u,Q,\partial Q^-)$. 
Set now 
$$W(u,Q)=\{\varphi\in L^2(Q),\;\nabla_t\cdot(\widetilde u\varphi)\in L^2(Q),\;{\varphi_\mid}_{\partial Q^-}\in L^2(\partial Q^-,\vert(\widetilde u\vert\widetilde n)\vert\mathrm{d}\widetilde\sigma)\}.$$
If $u$ is regular enough, e.g. $u\in L^2(0,T;H^1_0(\Omega)^d)$, it can be seen that $$H(U,Q)\cap L^\infty(Q)=W(U,Q)\cap L^\infty(Q)$$
see \cite{bessop}.
\\
In the sequel we assume 
 $c_b\in L^\infty(\partial Q^-)\cap G^-$ such that $c_b=\gamma^-(C_b)$ with $C_b\in W^{1,\infty}(Q).$ Thus, if $f=0$ in (\ref{st1}) the least-squares solution $c\in L^\infty(Q).$
\subsection{Two-domains formulation}

In this subsection the two-domains problem (\ref{st11})-(\ref{st15}) is rewritten in STILS form.
\subsubsection{Anisotropic Sobolev spaces}
Consider the norm
$$\Vert \varphi_i\Vert_{H(u,Q_i)}=\left(\Vert\varphi_i\Vert_{L^2(Q_i)}^2+\Vert \nabla_t\cdot(\widetilde u\varphi_i) \Vert^2_{L^2(Q_i)}+\int_{\partial Q^-_i\cup\widetilde\Gamma_i^-}\vert (\widetilde u\vert\widetilde n) \vert{\varphi_i}^2d\widetilde\sigma\right)^{\frac{1}{2}},$$
and define $H(u,Q_i)$ as the closure of $\mathcal{D}(\overline Q_i)$ for this norm
$$H(u,Q_i)=\overline{\mathcal{D}(\overline Q_i)}^{H(u,Q_i)}
,\;H_0(u,Q_i,\widetilde\Sigma)=\{\varphi_i\in H(u,Q_i);\; {\varphi_i}_{\mid_{\widetilde\Sigma}}=0\},\;\;\widetilde\Sigma\subset\partial Q_i^-\cup\widetilde\Gamma^-_i .
$$
Moreover if $\widetilde\Sigma$ is a (relatively) open set of $\partial Q_i^-\cup\widetilde\Gamma^-_i$ or $\partial Q_i^+\cup\widetilde\Gamma^+_i$, set  $$L^2(\widetilde\Sigma,\vert(\widetilde u\vert\widetilde n)\vert\mathrm{d}\widetilde\sigma)=\{\varphi:\widetilde\Sigma\rightarrow\mathbf{R}; \int_{\widetilde\Sigma}\vert(\widetilde u\vert\widetilde n)\vert\varphi^2\mathrm{d}\widetilde\sigma<\infty\},$$
this is a Hilbert space with the norm
$$\Vert\varphi \Vert_{\widetilde\Sigma,(\widetilde u\vert\widetilde n)}
=\Big(\int_{\widetilde\Sigma}\vert(\widetilde u\vert\widetilde n)\vert\varphi^2\mathrm{d}\widetilde\sigma\Big)^{1/2}$$
induced by the scalar product$$(\varphi_i,\psi_i)_{\widetilde\Sigma,(\widetilde u\vert\widetilde n)}=\pm\int_{\widetilde\Sigma}(\widetilde u\vert\widetilde n)\varphi_i\psi_i\mathrm{d}\widetilde\sigma,\;\;\widetilde\Sigma\subset \partial Q_i^\pm\cup\widetilde\Gamma^\pm_i. $$

For $i=1,2,$ consider the Hilbert space $$H(\widetilde\mathrm{div},Q_i)=\{\varphi_i\in L^2(Q_i)^{d+1}:\nabla_t\cdot\varphi_i\in L^2(Q_i)\}$$
with the graph norm $$\Vert \varphi_i\Vert_{H(\widetilde\mathrm{div},Q_i)}=\Big[\Vert \varphi_i\Vert^2_{{L^2(Q_i)}^{d+1}}+\Vert\nabla_t\cdot\varphi_i\Vert^2_{L^2(Q_i)}\Big]^{1/2}.$$
The same definition is given to $H(\widetilde\mathrm{div},Q).$\\

The trace result for the $H(u,Q_i)-$functions is given by
\begin{proposition}\label{trace_loc}
Under the assumption (\ref{st16}) there exists a linear continuous trace operator 
 $$\begin{array}{ccc}
            \gamma_i:H(u,Q_i) & \longrightarrow & L^2(\partial Q_i,\vert(\widetilde u\vert\widetilde n)\vert d\widetilde\sigma) \\
                     \varphi_i  & \longmapsto     & {\varphi_i}_{\mid_{\partial Q_i}}
           \end{array}$$
which can be localized as
$$\begin{array}{ccc}
            {\gamma_i}^{\pm}:H(u,Q_i) & \longrightarrow & L^2(\partial Q_i^\pm\cup\widetilde\Gamma^\pm_i,\vert(\widetilde u\vert\widetilde n)\vert d\widetilde\sigma) \\
                     \varphi_i  & \longmapsto     & {\varphi_i}_{\mid_{\partial Q_i^\pm\cup\widetilde\Gamma^\pm_i}}
           \end{array}$$
 \label{th5}          
\end{proposition}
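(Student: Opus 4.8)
The plan is to establish Proposition~\ref{trace_loc} by transporting the global trace result (the operator $\gamma^-$ on $H(u,Q)$, and more generally the two-sided trace encoded in the definition of the $H(u,Q)$-norm) to the subdomain $Q_i$. First I would observe that $Q_i=\Omega_i\times(0,T)$ has a Lipschitz boundary made up of the ``lateral'' part $\partial\Omega_i\times(0,T)$, the top $\Omega_i\times\{T\}$, the bottom $\Omega_i\times\{0\}$, and that $\partial Q_i=(\partial Q_i^-\cup\widetilde\Gamma_i^-)\cup(\partial Q_i^+\cup\widetilde\Gamma_i^+)\cup(\text{zero-flux part})$, where the splitting is governed by the sign of $(\widetilde u\mid\widetilde n)$. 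Since $\mathcal{D}(\overline Q_i)$ is dense in $H(u,Q_i)$ by definition of the latter as a closure, it suffices to prove the trace inequality
\begin{eqnarray}
\int_{\partial Q_i}|(\widetilde u\mid\widetilde n)|\,\varphi_i^2\,\mathrm{d}\widetilde\sigma\;\le\;C\,\Vert\varphi_i\Vert_{H(u,Q_i)}^2\qquad\forall\,\varphi_i\in\mathcal{D}(\overline Q_i),
\nonumber
\end{eqnarray}
with $C$ independent of $\varphi_i$, and then extend $\gamma_i$ by density and continuity. The localized operators $\gamma_i^\pm$ are then simply the restrictions of $\gamma_i(\varphi_i)$ to the relatively open subsets $\partial Q_i^\pm\cup\widetilde\Gamma_i^\pm$ of $\partial Q_i$; continuity into $L^2(\partial Q_i^\pm\cup\widetilde\Gamma_i^\pm,|(\widetilde u\mid\widetilde n)|\,\mathrm{d}\widetilde\sigma)$ is immediate once the global estimate is in hand, since restricting to a measurable subset only decreases the norm.

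For the trace inequality itself I would use the divergence/Green formula for the field $\varphi_i^2\,\widetilde u\in H(\widetilde{\mathrm{div}},Q_i)$ (legitimate because $\varphi_i\in\mathcal{D}(\overline Q_i)$ and $u$ satisfies~(\ref{st16}), so Theorem~\ref{chen_frid} gives $(\widetilde u\mid\widetilde n)\in L^\infty(\partial Q_i)$): integrating $\nabla_t\cdot(\varphi_i^2\widetilde u)=2\varphi_i\,\nabla_t\cdot(\widetilde u\varphi_i)-\varphi_i^2\,\nabla_t\cdot\widetilde u$ over $Q_i$ and applying the divergence theorem converts the boundary term $\int_{\partial Q_i}(\widetilde u\mid\widetilde n)\varphi_i^2\,\mathrm{d}\widetilde\sigma$ into volume integrals controlled by $\Vert\varphi_i\Vert_{L^2(Q_i)}^2+\Vert\nabla_t\cdot(\widetilde u\varphi_i)\Vert_{L^2(Q_i)}^2$ via Cauchy--Schwarz and the bounds $\|\nabla\cdot u\|_{L^\infty}<\infty$, $\|\widetilde u\|_{L^\infty}<\infty$. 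This bounds $\int_{\partial Q_i}(\widetilde u\mid\widetilde n)\varphi_i^2$, i.e.\ the difference of the outflow and inflow boundary mass. To get the absolute value $\int_{\partial Q_i}|(\widetilde u\mid\widetilde n)|\varphi_i^2$ one also needs to control the inflow part $\int_{\partial Q_i^-\cup\widetilde\Gamma_i^-}|(\widetilde u\mid\widetilde n)|\varphi_i^2$ separately — but that quantity is already one of the three terms defining $\Vert\varphi_i\Vert_{H(u,Q_i)}$, so adding it back in closes the estimate: the outflow part is then bounded by the inflow part plus the volume terms, hence the full boundary norm is bounded by $C\Vert\varphi_i\Vert_{H(u,Q_i)}^2$.

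The main obstacle, and the point needing the most care, is the justification of the Green/divergence formula on the Lipschitz cylinder $Q_i$ with the merely $L^\infty$ normal flux $(\widetilde u\mid\widetilde n)$, and — more subtly — making sense of the pieces $\widetilde\Gamma_i^\pm$ and $\partial Q_i^\pm$ as a genuine measurable partition of $\partial Q_i$ on which the trace can be restricted. For smooth $\varphi_i$ this is classical provided $\widetilde u\in H(\widetilde{\mathrm{div}},Q_i)\cap L^\infty$, which follows from~(\ref{st16}); the normal trace $(\widetilde u\mid\widetilde n)$ being in $L^\infty(\partial Q_i)$ by Theorem~\ref{chen_frid} is exactly what makes the weighted $L^2$ boundary integrals finite for $\varphi_i\in\mathcal{D}(\overline Q_i)$. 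One should also note that on $Q_i$ the top face $\Omega_i\times\{T\}$ lies in the outflow set and the bottom face $\Omega_i\times\{0\}$ in the inflow set (since $\widetilde u=(u,1)$ so $(\widetilde u\mid\widetilde n)=\mp1$ there), so these contribute to $\partial Q_i^\pm$ respectively, consistently with the definitions in~(\ref{st4}) and the decomposition of $\widetilde\Gamma$ above; once this bookkeeping is fixed, the density argument transfers both $\gamma_i$ and its localizations $\gamma_i^\pm$ verbatim from $\mathcal{D}(\overline Q_i)$ to all of $H(u,Q_i)$, which completes the proof.
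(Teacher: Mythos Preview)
Your proposal is correct and follows essentially the same route as the paper: both arguments apply the divergence theorem to $\widetilde u\varphi_i^2$ (the paper packages this as the bilinear form $L_i(\varphi_i,\psi_i)=\int_{Q_i}\big(\nabla_t\cdot(\widetilde u\varphi_i)\psi_i+(\widetilde u\mid\nabla_t\psi_i)\varphi_i\big)\,\mathrm{dxdt}+\int_{\partial Q_i^-\cup\widetilde\Gamma_i^-}|(\widetilde u\mid\widetilde n)|\varphi_i\psi_i\,\mathrm{d}\widetilde\sigma$ and observes $L_i(\varphi_i,\varphi_i)=\Vert\varphi_i\Vert^2_{\partial Q_i^+\cup\widetilde\Gamma_i^+,(\widetilde u\mid\widetilde n)}$), bound the resulting volume terms via Cauchy--Schwarz and $\Vert\nabla\cdot u\Vert_{L^\infty}$, use that the inflow contribution is already part of $\Vert\cdot\Vert_{H(u,Q_i)}$, and extend from $\mathcal{D}(\overline Q_i)$ by density. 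The only cosmetic difference is that the paper works with the bilinear pairing whereas you go straight to the quadratic trace estimate.
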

\begin{proof}
The detailed proof can be found in \cite{bessop}: 
first, consider the well-know trace operator 
$\gamma_n$ defined from $H(\mathrm{\widetilde{div}},Q_i)$ 
with value in $H^{-\frac{1}{2}}(\partial Q_i)$ (see \cite{gr,brez})
$$v_i\mapsto (\widetilde n\vert v_i)_{\mid
_{\partial Q_i}},\;\mathrm{for}\;\mathrm{all}\;v_i\in H(\widetilde{\mathrm{div}},Q_i)$$
with associated Green formula
$$\int_{Q_i}\left( \nabla_t\cdot(v_i)\psi_i + ( v_i\vert\nabla_t\psi_i)\right)\mathrm{dxdt}=\Big\langle(v_i\vert\widetilde n),\psi_i\Big\rangle_{H^{\frac{-1}{2}}(\partial Q_i);H^{\frac{1}{2}}(\partial Q_i)},\;
\forall\psi_i\in H^1(Q_i).$$ Plugging $v_i=\widetilde u\rho_i$ in the previous formula, we have
$$\int_{Q_i}\left(\nabla_t\cdot(\widetilde u\rho_i)\psi_i + ( \widetilde u\vert\nabla_t\psi_i)\rho_i\right)\;\mathrm{dxdt}=\rho_i\Big\langle(\widetilde u\vert\widetilde n),\psi_i\Big\rangle_{H^{\frac{-1}{2}}(\partial Q_i);H^{\frac{1}{2}}(\partial Q_i)},
\;\forall\psi_i\in H^1(Q_i),$$
Let us now consider the bilinear form $L_i:{\mathcal{D}(\overline Q_i)}\times{\mathcal{D}(\overline Q_i)} \subset H(u,Q_i)\times H(u,Q_i)\longrightarrow\mathbf{R}$ defined for all $\varphi_i, \psi_i\in {\mathcal{D}(\overline Q_i)}$ by
$$L_i(\varphi_i,\psi_i)=\int_{Q_i}\left( \nabla_t\cdot(\widetilde u\varphi_i)\psi_i+( \widetilde u\vert\nabla_t\psi_i)\varphi_i\right)\mathrm{dxdt} + \int_{\partial Q_i^-\cup\widetilde\Gamma^-_i}\vert(\widetilde u\vert\widetilde n)\vert\varphi_i\psi_i d\widetilde\sigma$$
Accounting to theorem \ref{chen_frid} we have 
$$\vert L_i(\varphi_i,\psi_i)\vert\leq \Vert \nabla_t\cdot(\widetilde u\varphi_i) \Vert_{L^2(Q_i)}\Vert\psi_i\Vert_{L^2(Q_i)} + \Vert \nabla_t\cdot(\widetilde u\psi_i)- \nabla_t\cdot(\widetilde u)\psi_i\Vert_{L^2(Q_i)}\Vert\varphi_i\Vert_{L^2(Q_i)}$$$$+\Vert\varphi_i\Vert_{\partial Q_i^-\cup\widetilde\Gamma^-_i,(\widetilde u\vert\widetilde n)}\Vert\psi_i\Vert_{\partial Q_i^-\cup\widetilde\Gamma^-_i,(\widetilde u\vert\widetilde n)}.$$
And the following estimate holds true
$$\vert L_i(\varphi_i,\psi_i)\vert\leq (1 + \Vert \nabla\cdot u\Vert_{L^\infty(Q_i)})\Vert\varphi_i\Vert_{H(u,Q_i) }\Vert\psi_i\Vert_{H(u,Q_i) }.$$
Since it 's straigforward to check that $L_i(\varphi_i,\varphi_i)=\Vert\varphi_i\Vert^2_{\partial Q_i^+\cup\widetilde\Gamma^+_i,(\widetilde u\vert\widetilde n)}$, if we extend by continuity the bilinear form $L_i$ to $H(u,Q_i)\times H(u,Q_i)$ 
\end{proof}
\\
%

The Curved-Poincar\'e inequality for the $H(u,Q_i)-$functions is given by 
\begin{theorem}\label{poincare_loc}
Under the assumption (\ref{st16}) the semi-norm on $H(u,Q_i)$ defined by $$\vert\varphi_i\vert_{H(u,Q_i)}=\left(\int_{Q_i}(\nabla_t\cdot(\widetilde u\varphi_i))^2\mathrm{dxdt}+\int_{\partial Q_i^-\cup\widetilde\Gamma^-_i}\vert(\widetilde u\vert\widetilde n)\vert\varphi_i^2d\widetilde\sigma\right)^{\frac{1}{2}}$$
is a norm, equivalent to the norm given on $H(u,Q_i).$
\label{th6}\\
\end{theorem}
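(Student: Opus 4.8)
The plan is to mimic the proof of the global Curved-Poincar\'e inequality (Theorem~\ref{poincare}), the only new ingredient being that for a subdomain the inflow part of the boundary is $\partial Q_i^-\cup\widetilde\Gamma_i^-$ instead of $\partial Q^-$. Since $\vert\varphi_i\vert_{H(u,Q_i)}\le\Vert\varphi_i\Vert_{H(u,Q_i)}$ is obvious from the two definitions, the whole point is the reverse control of the $L^2$-norm, namely
$$\Vert\varphi_i\Vert_{L^2(Q_i)}^2\le C\Big(\int_{Q_i}\big(\nabla_t\cdot(\widetilde u\varphi_i)\big)^2\,\mathrm{dxdt}+\int_{\partial Q_i^-\cup\widetilde\Gamma_i^-}\vert(\widetilde u\vert\widetilde n)\vert\varphi_i^2\,\mathrm{d}\widetilde\sigma\Big),$$
with $C$ depending only on $T$ and $\Vert\nabla\cdot u\Vert_{L^\infty(Q_i)}$. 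I would first establish this on the dense class $\mathcal{D}(\overline Q_i)$ and then pass to $H(u,Q_i)$ by density, using the continuity of the localized trace operator $\gamma_i^-$ of Proposition~\ref{trace_loc} to identify the limit of the inflow boundary term. Adding $\vert\varphi_i\vert_{H(u,Q_i)}^2$ to both sides of the inequality then yields $\Vert\varphi_i\Vert_{H(u,Q_i)}^2\le (C+1)\vert\varphi_i\vert_{H(u,Q_i)}^2$, i.e. the norm equivalence; in particular $\vert\cdot\vert_{H(u,Q_i)}$ is definite and hence a genuine norm.

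For $\varphi_i\in\mathcal{D}(\overline Q_i)$, set $g_i=\nabla_t\cdot(\widetilde u\varphi_i)$ and recall $\nabla_t\cdot\widetilde u=\nabla\cdot u$, so that the product rule gives the pointwise identity $\nabla_t\cdot(\widetilde u\varphi_i^2)=2\varphi_i g_i-(\nabla\cdot u)\varphi_i^2$. I would then introduce the exponential weight $w(x,t)=e^{-\lambda t}$, for which $\widetilde u\cdot\nabla_t w=-\lambda w$, whence
$$\nabla_t\cdot(\widetilde u\,w\varphi_i^2)=2w\varphi_i g_i-w\varphi_i^2\big(\nabla\cdot u+\lambda\big).$$
Choosing $\lambda=2\Vert\nabla\cdot u\Vert_{L^\infty(Q_i)}+1$ makes $\nabla\cdot u+\lambda\ge\lambda/2>0$ while $e^{-\lambda T}\le w\le 1$ on $\overline Q_i$. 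Since $\widetilde u\,w\varphi_i^2\in H(\widetilde{\mathrm{div}},Q_i)$ and $(\widetilde u\vert\widetilde n)\in L^\infty(\partial Q_i)$ by Theorem~\ref{chen_frid}, integrating the last identity over $Q_i$ and applying the Green formula recalled in the proof of Proposition~\ref{trace_loc} (with test function $1$) produces the boundary term $\int_{\partial Q_i}(\widetilde u\vert\widetilde n)w\varphi_i^2\,\mathrm{d}\widetilde\sigma$. Splitting $\partial Q_i=(\partial Q_i^-\cup\widetilde\Gamma_i^-)\cup(\partial Q_i^+\cup\widetilde\Gamma_i^+)\cup\widetilde\Gamma_i^0$ and discarding the nonnegative outflow contribution $\int_{\partial Q_i^+\cup\widetilde\Gamma_i^+}(\widetilde u\vert\widetilde n)w\varphi_i^2$, one is left with
$$\frac{\lambda}{2}\int_{Q_i}w\varphi_i^2\,\mathrm{dxdt}\le 2\int_{Q_i}w\varphi_i g_i\,\mathrm{dxdt}+\int_{\partial Q_i^-\cup\widetilde\Gamma_i^-}\vert(\widetilde u\vert\widetilde n)\vert w\varphi_i^2\,\mathrm{d}\widetilde\sigma.$$
Bounding $w$ below by $e^{-\lambda T}$ on the left and above by $1$ on the right, then absorbing $2\int_{Q_i}w\varphi_i g_i$ into the left-hand side by Young's inequality with a sufficiently small parameter, gives precisely the claimed estimate.

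The step I expect to cause the only real difficulty is the limiting argument: one must check that $\widetilde\Gamma_i^-$ is treated on the same footing as $\partial Q_i^-$ (this is exactly what the localized trace operator $\gamma_i^-$ of Proposition~\ref{trace_loc} provides), so that along a sequence $\varphi_i^{(k)}\to\varphi_i$ in $H(u,Q_i)$ the volume term $\int_{Q_i}(g_i^{(k)})^2$ and the inflow boundary term both converge, and so that the nonnegative outflow term may legitimately be dropped in the limit. A direct argument of this kind is cleaner than attempting to transfer Theorem~\ref{poincare} by extension across $\widetilde\Gamma$, precisely because of the interface terms; once the trace machinery of Proposition~\ref{trace_loc} is in place, everything else is a routine transcription of the single-domain computation.
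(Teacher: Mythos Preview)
Your proposal is correct and follows the same route the paper indicates: the paper does not write out a proof but refers to \cite{bessp,bessop}, noting that the result reduces to the inequality $\Vert\varphi_i\Vert_{L^2(Q_i)}\le\mathcal{C}(T)\,\vert\varphi_i\vert_{H(u,Q_i)}$, which is exactly the estimate you establish by the weighted energy argument on $\mathcal{D}(\overline Q_i)$ followed by density. The only cosmetic difference is that the constant quoted by the paper in the divergence-free case, $\mathcal{C}(T)=\sqrt{\max(4T^2,2T)}$, is polynomial in $T$, which suggests the referenced argument uses a linear-in-$t$ weight rather than your exponential $e^{-\lambda t}$; your version has the advantage of handling nonzero $\nabla\cdot u$ without a separate case.
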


The proof is detailed in \cite{bessp,bessop} and is achieved by the inequality $\Vert\varphi_i\Vert_{L^2(Q_i)}\leq {\cal{C}}(T)\vert\varphi_i\vert_{H(u,Q_i)},$ where ${\cal{C}}(T)=\sqrt{\mathrm{max}(4T^2,2T)}$ in the divergence free advection velocity case.\\

Consider now  the least-squares formulation of the two-domain space-time problem (\ref{st11})-(\ref{st15}): find $(c^1,c^2)\in H(u,Q_1)\times H(u,Q_2)$ such that
\begin{eqnarray}
\int_{Q_1} \nabla_t\cdot(\widetilde uc^1)\nabla_t\cdot(\widetilde u\varphi^1)\;\mathrm{dxdt}&=&\int_{Q_1} f\nabla_t\cdot(\widetilde u\varphi^1)\;\mathrm{dxdt}
 \label{st21}\\
 c^1& = & c_{b,1} \qquad\qquad \mathrm{on}\;\; \partial Q^-_1
 \label{st22}\\
\int_{Q_2} \nabla_t\cdot(\widetilde uc^2)\nabla_t\cdot(\widetilde u\varphi^2)\;\mathrm{dxdt}&=&\int_{Q_2} f\nabla_t\cdot(\widetilde u\varphi^2)\;\mathrm{dxdt}
\label{st23}\\
c^2& = & c_{b,2} \qquad\qquad \mathrm{on}\;\; \partial Q^-_2
\label{st24}\\
({c^1}_{\mid_{\widetilde\Gamma^-}}-{c^2}_{\mid_{\widetilde\Gamma^-}},\mu)_{\widetilde\Gamma^-,(\widetilde u\vert\widetilde n)}&=&-({c^2}_{\mid_{\widetilde\Gamma^+}}-{c^1}_{\mid_{\widetilde\Gamma^+}},\mu)_{\widetilde\Gamma^+,(\widetilde u\vert\widetilde n)}
\label{st25}
\end{eqnarray}
$\mathrm{for}\;\mathrm{all} \;(\varphi^1,\varphi^2)\in H_0(u,Q_1,\partial Q_1^-\cup\widetilde\Gamma_1^-)\times H_0(u,Q_2,\partial Q_2^-\cup\widetilde\Gamma_2^-),$ and\\\\
$\mathrm{for}\;\mathrm{all} \;\mu\in\wedge=\{\mu\in L^2(\widetilde\Gamma^-\cup\widetilde\Gamma^+, \vert(\widetilde u\vert\widetilde n)\vert d\widetilde\sigma),\exists\; \psi^\mu\in H(u,Q),\\\hspace*{1mm}\qquad\qquad\qquad\qquad\qquad\nabla_t\cdot(\widetilde u\psi^\mu)\in L^\infty(Q),\;\psi^\mu_{\mid_{\widetilde\Gamma^+\cup\widetilde\Gamma^-}}=\mu\}\cap L^\infty(\widetilde\Gamma^-\cup\widetilde\Gamma^+).$\\
Let us show that (\ref{st21})-(\ref{st25}) can be seen as a two-domains form of the STILS problem (\ref{st19}).\\

When the flow field has a constant direction at subdomain interface, for example $(\widetilde u\vert\tilde n)$ is always positive $(\widetilde\Gamma^-=\emptyset),$ the  two-domains problem (\ref{st21})-(\ref{st25}) has a unique solution. Indeed, the equation (\ref{st21})-(\ref{st22}) can be independently solved and admits a unique solution $c^1\in H(u,Q_1)$ (theorem \ref{poincare_loc}). Next one solves (\ref{st23})-(\ref{st24}) with ${c^2}_{\mid_{\widetilde\Gamma^+}}={c^1}_{\mid_{\widetilde\Gamma^+}}.$
\\
Let us investigate the situation $\widetilde\Gamma^-\neq\emptyset,\;\widetilde\Gamma^+\neq\emptyset.$ \\
It's well known that Green's formula holds for all $\varphi_i\in H(\widetilde{\mathrm{div}},Q_i)$ and all $\psi_i\in H^1(Q_i)$ (see Lions and Magenes \cite{lions}):
$$\int_{Q_i}\psi_i(\widetilde{\mathrm{div}}\;\varphi_i)\;\mathrm{dxdt}+\int_{Q_i}(\widetilde\nabla \psi_i\vert \varphi_i)\;\mathrm{dxdt}=\int_{\partial Q_i} (\varphi_i\vert\widetilde n)\psi_id\widetilde\sigma$$
Let $\zeta_i$ be a vector valued function of $H(\widetilde{\mathrm{div}},Q_i), i=1,2,$ and $\zeta$ be the function in Q, whose restriction to $Q_i$ coincides with  $\zeta_i$. Then  $\zeta$ belongs to $H(\widetilde{\mathrm{div}},Q)$ if and only if $$(\zeta_1\vert\widetilde n)=(\zeta_2\vert\widetilde n)\;\;(H^{\frac{1}{2}}_{00}(\widetilde\Gamma))'.$$
Since the STILS solution $c\in H(u,Q),$ it's seen that $\widetilde uc\in H(\widetilde{\mathrm{div}},Q)$ and $c_{\mid_{\widetilde\Gamma}}\in L^2(\widetilde\Gamma)$. Moreover, we have $$(\widetilde uc\vert\widetilde n)\in L^2(\widetilde\Gamma^-\cup\widetilde\Gamma^+),\;\;(\widetilde uc^1\vert\widetilde n) =(\widetilde uc^2\vert\widetilde n)\quad(H^{\frac{1}{2}}_{00}(\widetilde\Gamma^-\cup\widetilde\Gamma^+))'.$$
Finally, as $H^{\frac{1}{2}}_{00}(\widetilde\Gamma^-\cup\widetilde\Gamma^+)\subset L^2(\widetilde\Gamma^-\cup\widetilde\Gamma^+)$ with dense and continuous injection see \cite{lions,Dautray}, we obtain 
$$c^1=c^2\quad a.e \quad\mathrm{on}\quad\widetilde\Gamma^-\cup\widetilde\Gamma^+$$
and (\ref{st25}) is satisfied.\\

If $c\in H(u,Q)=\overline{\mathcal{D}(\overline Q)}^{H(u,Q)}$ is the solution of (\ref{st19}), then one gets that 
${c_\mid}_{Q_i}\in H(u,Q_i)=\overline{\mathcal{D}(\overline Q_i)}^{H(u,Q_i)}.$  Under the assumptions of theorem \ref{sol_fa}, $c$ is a weak solution, and $ \nabla_t\cdot(\widetilde uc)-f =0$ in $L^2(Q)$. Let $\varphi_i\in H_0(u,Q_i,\partial Q_i^-\cup\widetilde\Gamma_i^-)$, one has $$\vert\int_{Q_i} \big{(}\nabla_t\cdot(\widetilde u{c_\mid}_{Q_i})-f\big{)}\nabla_t\cdot(\widetilde u\varphi^i)\;\mathrm{dxdt}\vert\leq\Vert \nabla_t\cdot(\widetilde uc)-f \Vert_{L^2(Q)}\Vert\nabla_t\cdot(\widetilde u\varphi^i)\Vert_{L^2(Q_i)}=0.$$ Finally,  $({c_\mid}_{Q_1},{c_\mid}_{Q_2})$ is a solution of (\ref{st21})-(\ref{st25}).
  Thus we have the following result.
  
  \begin{proposition}\label{sol2}
 Assuming $u\in L^1(0,T;BV(\Omega)^d)$ with $\nabla_t\cdot\widetilde u=0$ and $f\in L^\infty(Q)$, the STILS solution of (\ref{st19}) is a solution of the two-domains STILS problem (\ref{st21})-(\ref{st25}) i.e: If $c\in H(u,Q)$ is the solution of (\ref{st19}) then $(c_{\mid_{Q_1}},c_{\mid_{Q_2}})\in H(u,Q_1)\times H(u,Q_2)$ satisfies (\ref{st21})-(\ref{st25}). 
 \label{th7}\\\\
\end{proposition}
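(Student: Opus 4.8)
The plan is to show that each of the three groups of conditions in \eqref{st21}--\eqref{st25} is inherited by the restrictions $c_{\mid_{Q_i}}$ from the global STILS solution $c$ of \eqref{st19}. First I would record the fact, already noted in the excerpt just before the statement, that the closure-based definitions of $H(u,Q)$ and $H(u,Q_i)$ are compatible: if $c\in H(u,Q)=\overline{\mathcal{D}(\overline Q)}^{H(u,Q)}$, then approximating $c$ by functions in $\mathcal{D}(\overline Q)$, whose restrictions lie in $\mathcal{D}(\overline Q_i)$, and using that the local norm $\Vert\cdot\Vert_{H(u,Q_i)}$ is dominated by (a constant times) the global norm on restrictions, one obtains $c_{\mid_{Q_i}}\in H(u,Q_i)$. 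This gives membership in the right function spaces, so both sides of \eqref{st21}--\eqref{st25} make sense. The boundary conditions \eqref{st22} and \eqref{st24} follow at once since $\gamma^-(c)=c_b$ and $\partial Q_i^-\subset\partial Q^-$, so that the localized trace operator $\gamma_i^-$ applied to $c_{\mid_{Q_i}}$ agrees with $c_b$ on $\partial Q_i^-$.

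Next I would verify the variational equalities \eqref{st21} and \eqref{st23}. Here I invoke Theorem~\ref{sol_fa} (Weak-solution): under the hypotheses $u\in L^1(0,T;BV(\Omega)^d)$, $\nabla_t\cdot\widetilde u=0$, $f\in L^\infty(Q)$, the global solution $c$ satisfies $\nabla_t\cdot(\widetilde u c)-f=0$ in $L^2(Q)$, hence \emph{a fortiori} in $L^2(Q_i)$. Then for any $\varphi^i\in H_0(u,Q_i,\partial Q_i^-\cup\widetilde\Gamma_i^-)$, extending $\nabla_t\cdot(\widetilde u\varphi^i)$ by zero outside $Q_i$ and applying Cauchy--Schwarz, the integral $\int_{Q_i}\bigl(\nabla_t\cdot(\widetilde u c_{\mid_{Q_i}})-f\bigr)\nabla_t\cdot(\widetilde u\varphi^i)\,\mathrm{dxdt}$ is bounded in absolute value by $\Vert\nabla_t\cdot(\widetilde u c)-f\Vert_{L^2(Q)}\Vert\nabla_t\cdot(\widetilde u\varphi^i)\Vert_{L^2(Q_i)}=0$, which is precisely \eqref{st21}, resp. \eqref{st23}. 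This step is essentially the computation already displayed in the paragraph preceding the proposition.

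The one remaining — and main — point is the interface condition \eqref{st25}, which is the matching condition on $\widetilde\Gamma^-\cup\widetilde\Gamma^+$. Since $c\in H(u,Q)$, one has $\widetilde u c\in H(\widetilde{\mathrm{div}},Q)$ (using $\widetilde u\in L^\infty$ and $\nabla_t\cdot(\widetilde u c)\in L^2(Q)$), so the $H(\widetilde{\mathrm{div}})$ normal-trace jump relation across $\widetilde\Gamma$ (Lions--Magenes / the characterization recalled in the excerpt) gives $(\widetilde u c^1\vert\widetilde n)=(\widetilde u c^2\vert\widetilde n)$ in $(H^{1/2}_{00}(\widetilde\Gamma^-\cup\widetilde\Gamma^+))'$. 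On $\widetilde\Gamma^-\cup\widetilde\Gamma^+$ the weight $(\widetilde u\vert\widetilde n)$ is bounded and bounded away from $0$ on compacts, and $(\widetilde u c\vert\widetilde n)\in L^2(\widetilde\Gamma^-\cup\widetilde\Gamma^+)$; together with the dense continuous injection $H^{1/2}_{00}\hookrightarrow L^2$ this upgrades the identity to $c^1=c^2$ a.e.\ on $\widetilde\Gamma^-\cup\widetilde\Gamma^+$. Testing this almost-everywhere equality against any $\mu\in\wedge$, pairing with the weight $(\widetilde u\vert\widetilde n)$ and splitting the interface into its $\widetilde\Gamma^-$ and $\widetilde\Gamma^+$ parts — on which the scalar product $(\cdot,\cdot)_{\widetilde\Sigma,(\widetilde u\vert\widetilde n)}$ carries the sign $-$ and $+$ respectively, per the definition $(\varphi_i,\psi_i)_{\widetilde\Sigma,(\widetilde u\vert\widetilde n)}=\pm\int_{\widetilde\Sigma}(\widetilde u\vert\widetilde n)\varphi_i\psi_i\,\mathrm{d}\widetilde\sigma$ — yields exactly \eqref{st25}, since both sides equal $\pm\int(\widetilde u\vert\widetilde n)(c^1-c^2)\mu$ evaluated over the respective pieces with $c^1-c^2=0$. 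The delicate part of the argument is the passage from the distributional normal-trace identity to the pointwise-a.e.\ equality of traces on $\widetilde\Gamma^-\cup\widetilde\Gamma^+$: one must be careful that the set $\widetilde\Gamma^0$ where $(\widetilde u\vert\widetilde n)=0$ is excluded (the traces are only controlled in the weighted $L^2$ spaces), and that the functional-analytic embedding $H^{1/2}_{00}(\widetilde\Gamma^\pm)\hookrightarrow L^2(\widetilde\Gamma^\pm)$ really does let one identify the $(H^{1/2}_{00})'$ object with an $L^2$ function, which is where the regularity $\widetilde u c\in H(\widetilde{\mathrm{div}},Q)$ combined with $c\in L^2(\widetilde\Gamma)$ is used. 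Collecting the three verifications shows $(c_{\mid_{Q_1}},c_{\mid_{Q_2}})$ solves \eqref{st21}--\eqref{st25}.
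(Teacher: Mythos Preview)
Your proposal is correct and follows essentially the same approach as the paper: the paper also uses Theorem~\ref{sol_fa} to get $\nabla_t\cdot(\widetilde u c)-f=0$ in $L^2(Q)$ and then the Cauchy--Schwarz bound to verify \eqref{st21} and \eqref{st23}, and it obtains the interface condition \eqref{st25} by the same $H(\widetilde{\mathrm{div}},Q)$ normal-trace jump argument combined with the dense embedding $H^{1/2}_{00}\hookrightarrow L^2$ to pass from the distributional identity to $c^1=c^2$ a.e.\ on $\widetilde\Gamma^-\cup\widetilde\Gamma^+$. The only differences are cosmetic: you treat the boundary conditions \eqref{st22}, \eqref{st24} explicitly and order the three verifications differently, but the substance is identical.
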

To get the reciprocal it suffices to show that the interface equation associated to (\ref{st25}) admits a unique solution.\\
In the sequel $\Big\langle .,.\Big\rangle_W$ denotes the duality pairing between $W$ and its topological dual $W'.$
\subsubsection{Interface equation}
 Let $\eta\in \wedge,$ and let $c^{i,\eta}\in H_0(u,Q_i,\partial Q_i^-)$ be the STILS-homogeneous extension of $\eta$ on $Q_i,\; i=1,2$ defined by
\begin{eqnarray}
\int_{Q_i} \nabla_t\cdot(\widetilde uc^{i,\eta})\nabla_t\cdot(\widetilde u\varphi^i)\;\mathrm{dxdt}
&=&0
\quad\forall\; \varphi_i\in H_0(u,Q_i,\partial Q_i^-\cup\widetilde\Gamma_i^-)
\label{st26}\\
{\gamma^-_i(c^{i,\eta})}_{\mid_{\widetilde\Gamma^-_i}}&=&\eta.
\label{st27}
\end{eqnarray}
Let also $c^{i,*}\in H_0(u,Q_i,\widetilde\Gamma_i^-)$ be the Dirichlet homogeneous extension satisfying 
\begin{eqnarray}
\int_{Q_i} \nabla_t\cdot(\widetilde uc^{i,*})\nabla_t\cdot(\widetilde u\varphi^i)\;\mathrm{dxdt}
&=&
\int_{Q_i} f\nabla_t\cdot(\widetilde u\varphi^i)\;\mathrm{dxdt} 
\label{st28}\\
{\gamma^-_i(c^{i,*})}_{\mid_{\partial Q_i^-}}&=&c_b.
\end{eqnarray}
\\ for all $\varphi^i\in H_0(u,Q_i,\partial Q_i^-\cup\widetilde\Gamma_i^-).$ 
Thanks to Lax-Milgram Lemma, the Curved-Poincar\'e inequality (theorem \ref{poincare_loc}) shows that these solutions are unique.\\
Let $(c^1,c^2)$ a solution of the problem (\ref{st21})-(\ref{st25}). If $\lambda={{c^1}_{\mid}}_{\widetilde\Gamma^-\cup\widetilde\Gamma^+}={{c^2}_{\mid}}_{\widetilde\Gamma^-\cup\widetilde\Gamma^+}\in\wedge$ then $c^i=c^{i,\lambda}+c^{i,*}$. Conversely, $(c^{1,\lambda}+c^{1,*},c^{2,\lambda}+c^{2,*})\in H(u,Q_1)\times H(u,Q_2)$ is a solution of the two-domains formulation if and only if it satisfies (\ref{st25}) : $$(\lambda-c^{2,\lambda}-c^{2,*},\mu)_{\widetilde\Gamma^-,(\widetilde u\vert\widetilde n)}+(\lambda-c^{1,\lambda}-c^{1,*},\mu)_{\widetilde\Gamma^+,(\widetilde u\vert \widetilde n)}=0,\;\forall \mu\in\wedge.$$
 Thus to obtain a solution $(c^1,c^2)$ of (\ref{st21})-(\ref{st25}) such that ${{c^i}_{\mid}}_{\widetilde\Gamma^-\cup\widetilde\Gamma^+}\in\wedge,\;i=1,2$ it is sufficient to find $\lambda\in\wedge$ satisfying the Steklov-Poincar\'e equation 
\begin{eqnarray}
  \hspace*{2mm}\qquad\Big\langle S\lambda,\mu \Big\rangle_{\wedge}= \Big\langle\chi,\mu \Big\rangle_{\wedge} \; \quad\forall\;\mu\in\wedge
\label{st29}
\end{eqnarray}
with
$S=S_1+S_2\;$  and $\chi=\chi_1+\chi_2$ such that $$\Big\langle S_i\lambda,\mu \Big\rangle_{\wedge}:=(\lambda,\mu)_{\widetilde\Gamma^-_i,(\widetilde u\vert \widetilde n)}-(c^{i,\lambda},\mu)_{\widetilde\Gamma^+_i,(\widetilde u\vert\widetilde n)},$$ and 
$$\Big\langle\chi_i,\mu \Big\rangle_{\wedge}:=(c^{i,*},\mu)_{\widetilde\Gamma^+_i,(\widetilde u\vert\widetilde n)}.$$
 Consider $\xi_i\in \mathcal{D}(\overline Q_i)\;\;\mathrm{and} \;\;\zeta_i$  a regular enough function, we have  
$$\int_{Q_i} \nabla_t\cdot(\widetilde u\xi_i\zeta_i)\mathrm{dxdt}=\int_{\partial Q_i}(\widetilde u\vert\widetilde n)\xi_i\zeta_i\mathrm{d}\widetilde\sigma=\int_{\partial Q^-_i\cup\widetilde\Gamma^-_i}(\widetilde u\vert\widetilde n)\xi_i\zeta_i\mathrm{d}\widetilde\sigma + \int_{\partial Q^+_i\cup\widetilde\Gamma^+_i}(\widetilde u\vert\widetilde n)\xi_i\zeta_i\mathrm{d}\widetilde\sigma$$
then
$$\int_{\partial Q^+_i\cup\widetilde\Gamma^+_i}(\widetilde u\vert\widetilde n)\xi_i\zeta_i\mathrm{d}\widetilde\sigma=-\int_{\partial Q^-_i\cup\widetilde\Gamma^-_i}(\widetilde u\vert\widetilde n)\xi_i\zeta_i\mathrm{d}\widetilde\sigma + \int_{Q_i} (\nabla_t\cdot(\widetilde u\xi_i)\zeta_i + (\widetilde u\vert\widetilde\nabla\zeta_i)\xi_i) \mathrm{dxdt}$$
and
\begin{eqnarray}
\nonumber
\int_{\partial Q^+_i\cup\widetilde\Gamma^+_i}(\widetilde u\vert\widetilde n)\xi_i\zeta_i\mathrm{d}\widetilde\sigma&=& -\int_{\partial Q^-_i\cup\widetilde\Gamma^-_i}(\widetilde u\vert\widetilde n)\xi_i\zeta_i\mathrm{d}\widetilde\sigma \\&&\nonumber+ \int_{Q_i} (\nabla_t\cdot(\widetilde u\xi_i)\zeta_i + \int_{Q_i} (\nabla_t\cdot(\widetilde u\zeta_i)\xi_i 
- (\nabla_t\cdot \widetilde u)\xi_i\zeta_i) \mathrm{dxdt}
\end{eqnarray}
Thus by density, if $\nabla_t\cdot\widetilde u=0$ the following result is obtained.
\begin{proposition}\label{int_bord}
Let $\varphi^i\in H(u,Q_i)$ such that $\nabla_t\cdot(\widetilde u\varphi^i)=0$ in $L^2(Q_i)$, then\\$$\int_{\partial Q_i^+\cup\widetilde\Gamma^+_i}(\widetilde u\vert\widetilde n){\varphi^i}^2\mathrm{d}\widetilde\sigma =-\int_{\partial Q_i^-\cup\widetilde\Gamma^-_i}(\widetilde u\vert\widetilde n){\varphi^i}^2\mathrm{d}\widetilde\sigma. $$
\label{th8}\\
\end{proposition}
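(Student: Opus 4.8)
The plan is to establish the identity first for smooth test functions and then pass to the limit using the definition of $H(u,Q_i)$. For $\xi_i\in\mathcal{D}(\overline Q_i)$, I would specialize the Green-type formula derived just above the statement to $\zeta_i=\xi_i$; since $\nabla_t\cdot\widetilde u=0$, the two volume contributions $\nabla_t\cdot(\widetilde u\xi_i)\xi_i$ coincide and the term $(\nabla_t\cdot\widetilde u)\xi_i^2$ disappears, leaving
\[
\int_{\partial Q_i^+\cup\widetilde\Gamma_i^+}(\widetilde u\vert\widetilde n)\xi_i^2\,\mathrm{d}\widetilde\sigma
=-\int_{\partial Q_i^-\cup\widetilde\Gamma_i^-}(\widetilde u\vert\widetilde n)\xi_i^2\,\mathrm{d}\widetilde\sigma
+2\int_{Q_i}\nabla_t\cdot(\widetilde u\xi_i)\,\xi_i\,\mathrm{dxdt}.
\]
This is just the elementary energy balance obtained from the divergence theorem on the Lipschitz domain $Q_i$, so nothing subtle happens at this stage.

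Next, for $\varphi^i\in H(u,Q_i)$ with $\nabla_t\cdot(\widetilde u\varphi^i)=0$ in $L^2(Q_i)$, I would choose a sequence $\xi_i^n\in\mathcal{D}(\overline Q_i)$ with $\xi_i^n\to\varphi^i$ in $H(u,Q_i)$ (available by definition of $H(u,Q_i)$ as the closure of $\mathcal{D}(\overline Q_i)$) and pass to the limit in the three terms. For the volume term, Cauchy--Schwarz gives $\bigl|\int_{Q_i}\nabla_t\cdot(\widetilde u\xi_i^n)\,\xi_i^n\,\mathrm{dxdt}\bigr|\le\Vert\nabla_t\cdot(\widetilde u\xi_i^n)\Vert_{L^2(Q_i)}\Vert\xi_i^n\Vert_{L^2(Q_i)}$, whose first factor tends to $\Vert\nabla_t\cdot(\widetilde u\varphi^i)\Vert_{L^2(Q_i)}=0$ while the second stays bounded, so this term vanishes in the limit. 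For the two boundary terms, I would invoke the continuity of the localized trace operators $\gamma_i^\pm$ of Proposition~\ref{trace_loc}, which yields $\gamma_i^\pm(\xi_i^n)\to\gamma_i^\pm(\varphi^i)$ in $L^2(\partial Q_i^\pm\cup\widetilde\Gamma_i^\pm,\vert(\widetilde u\vert\widetilde n)\vert\mathrm{d}\widetilde\sigma)$; since $(\widetilde u\vert\widetilde n)$ keeps a constant sign on each of $\partial Q_i^\pm\cup\widetilde\Gamma_i^\pm$ and lies in $L^\infty(\partial Q)$ by Theorem~\ref{chen_frid}, writing $(\widetilde u\vert\widetilde n)\bigl((\xi_i^n)^2-(\varphi^i)^2\bigr)=\pm\vert(\widetilde u\vert\widetilde n)\vert(\xi_i^n-\varphi^i)(\xi_i^n+\varphi^i)$ and applying Cauchy--Schwarz in the weighted space shows the squared boundary integrals converge. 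Taking $n\to\infty$ then gives the claimed identity.

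The only genuine point to watch is the limit of the boundary integrals: one must use the weighted $L^2$ convergence of the traces (not merely $H^{-1/2}$ or unweighted $L^2$ convergence), which is precisely what Proposition~\ref{trace_loc} supplies, and one should record the decomposition $\partial Q_i=(\partial Q_i^-\cup\widetilde\Gamma_i^-)\cup(\partial Q_i^+\cup\widetilde\Gamma_i^+)\cup\widetilde\Gamma_i^0$, noting that $\widetilde\Gamma_i^0$ contributes nothing because $(\widetilde u\vert\widetilde n)=0$ there. Everything else is routine once the sign structure of $(\widetilde u\vert\widetilde n)$ on the inflow and outflow pieces is tracked carefully.
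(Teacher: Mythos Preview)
Your proposal is correct and follows exactly the approach the paper indicates: the paper derives the Green-type identity for smooth $\xi_i,\zeta_i$, then simply writes ``Thus by density, if $\nabla_t\cdot\widetilde u=0$ the following result is obtained'', and your argument is precisely a careful fleshing-out of that density step (specializing $\zeta_i=\xi_i$, approximating $\varphi^i$ in $H(u,Q_i)$, and using Proposition~\ref{trace_loc} to pass to the limit in the boundary terms). The extra care you take with the weighted trace convergence and the null contribution of $\widetilde\Gamma_i^0$ is appropriate and does not diverge from the paper's intended route.
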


If $u\in L^1(0,T,BV(\Omega)^d)$, $\nabla_t\cdot\widetilde u=0$, the solution $c^{i,\eta}$ of the subproblem (\ref{st26})-(\ref{st27}) is also a weak solution (theorem \ref{sol_fa}). Thus $\nabla_t\cdot (\widetilde uc^{i,\eta})=0$ in $L^2(Q_i)$. And 
the above proposition shows that $S=S_1+S_2:\wedge\longrightarrow\wedge'$ such that $S_i\eta={\eta_\mid}_{\widetilde\Gamma^-_i}-{{c^{i,\eta}}_\mid}_{\widetilde\Gamma^+_i}$ is continuous (i.e $S_i\eta\in\wedge'$).

Let $\eta\in \wedge$
we have
$$\Big\langle S_i\eta,\eta\Big\rangle_{\wedge}=(\eta,\eta)_{\widetilde\Gamma^-_i,(\widetilde u\vert\widetilde n)}-(c^{i,\eta},\eta)_{\widetilde\Gamma^+_i,(\widetilde u\vert\widetilde n)}.$$
Moreover 
\begin{eqnarray}
\frac{1}{2}\Vert c^{i,\eta}\Vert^2_{\widetilde\Gamma^+_i,(\widetilde u\vert\widetilde n)}  -(c^{i,\eta},\eta)_{\widetilde\Gamma^+_i,(\widetilde u\vert\widetilde n)}=\frac{1}{2}\Vert\eta-c^{i,\eta}\Vert^2_{\widetilde\Gamma^+_i,(\widetilde u\vert\widetilde n)}-\frac{1}{2}\Vert\eta\Vert^2_{\widetilde\Gamma^+_i,(\widetilde u\vert\widetilde n)}. 
\label{st30}
\end{eqnarray}

Finally applying relation (\ref{st30}), and proposition \ref{int_bord} to $\Big\langle S_i\eta,\eta\Big\rangle_\wedge$, the following result is obtained.
\begin{proposition}For $i=1,2$ we have\\
$\Big\langle S_i\eta,\eta\Big\rangle_{\wedge}\; \geq \frac{1}{2}(\Vert c^{i,\eta}\Vert^2_{\partial Q_i^+,(\widetilde u\vert\widetilde n)} + \Vert \eta\Vert^2_{\widetilde\Gamma^-_i ,(\widetilde u\vert\widetilde n)} + \Vert \eta-c^{i,\eta}\Vert^2_{\widetilde\Gamma^+_i,(\widetilde u\vert\widetilde n)}-\Vert \eta\Vert^2_{\widetilde\Gamma^+_i,(\widetilde u\vert\widetilde n)}),\; \forall \;\eta\in\wedge.$
\label{th9}\\

\end{proposition}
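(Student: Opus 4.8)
The plan is to compute $\bigl\langle S_i\eta,\eta\bigr\rangle_{\wedge}$ directly from its definition and then insert the two structural identities that have just been established, namely Proposition~\ref{int_bord} (the flux-balance identity valid because $\nabla_t\cdot(\widetilde u c^{i,\eta})=0$) and the elementary algebraic identity~(\ref{st30}). First I would write
$$
\bigl\langle S_i\eta,\eta\bigr\rangle_{\wedge}
=(\eta,\eta)_{\widetilde\Gamma^-_i,(\widetilde u\vert\widetilde n)}-(c^{i,\eta},\eta)_{\widetilde\Gamma^+_i,(\widetilde u\vert\widetilde n)}
=\Vert\eta\Vert^2_{\widetilde\Gamma^-_i,(\widetilde u\vert\widetilde n)}-(c^{i,\eta},\eta)_{\widetilde\Gamma^+_i,(\widetilde u\vert\widetilde n)}.
$$

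Next I would use~(\ref{st30}) to replace the cross term: since $-(c^{i,\eta},\eta)_{\widetilde\Gamma^+_i,(\widetilde u\vert\widetilde n)}=\tfrac12\Vert\eta-c^{i,\eta}\Vert^2_{\widetilde\Gamma^+_i,(\widetilde u\vert\widetilde n)}-\tfrac12\Vert\eta\Vert^2_{\widetilde\Gamma^+_i,(\widetilde u\vert\widetilde n)}-\tfrac12\Vert c^{i,\eta}\Vert^2_{\widetilde\Gamma^+_i,(\widetilde u\vert\widetilde n)}$, this gives
$$
\bigl\langle S_i\eta,\eta\bigr\rangle_{\wedge}
=\Vert\eta\Vert^2_{\widetilde\Gamma^-_i,(\widetilde u\vert\widetilde n)}
+\tfrac12\Vert\eta-c^{i,\eta}\Vert^2_{\widetilde\Gamma^+_i,(\widetilde u\vert\widetilde n)}
-\tfrac12\Vert\eta\Vert^2_{\widetilde\Gamma^+_i,(\widetilde u\vert\widetilde n)}
-\tfrac12\Vert c^{i,\eta}\Vert^2_{\widetilde\Gamma^+_i,(\widetilde u\vert\widetilde n)}.
$$
Now I apply Proposition~\ref{int_bord} to $\varphi^i=c^{i,\eta}$ (legitimate because, under $u\in L^1(0,T;BV(\Omega)^d)$, $\nabla_t\cdot\widetilde u=0$, Theorem~\ref{sol_fa} makes $c^{i,\eta}$ a weak solution of~(\ref{st26}), hence $\nabla_t\cdot(\widetilde u c^{i,\eta})=0$ in $L^2(Q_i)$): the identity reads $\int_{\partial Q_i^+\cup\widetilde\Gamma^+_i}(\widetilde u\vert\widetilde n){c^{i,\eta}}^2\,\mathrm d\widetilde\sigma=-\int_{\partial Q_i^-\cup\widetilde\Gamma^-_i}(\widetilde u\vert\widetilde n){c^{i,\eta}}^2\,\mathrm d\widetilde\sigma$. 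Splitting both sides and using the sign conventions of the $(\cdot,\cdot)_{\widetilde\Sigma,(\widetilde u\vert\widetilde n)}$ pairings gives $\Vert c^{i,\eta}\Vert^2_{\partial Q_i^+,(\widetilde u\vert\widetilde n)}+\Vert c^{i,\eta}\Vert^2_{\widetilde\Gamma^+_i,(\widetilde u\vert\widetilde n)}=\Vert c^{i,\eta}\Vert^2_{\partial Q_i^-,(\widetilde u\vert\widetilde n)}+\Vert c^{i,\eta}\Vert^2_{\widetilde\Gamma^-_i,(\widetilde u\vert\widetilde n)}$; since $c^{i,\eta}$ vanishes on $\partial Q_i^-$ (it lies in $H_0(u,Q_i,\partial Q_i^-)$) and equals $\eta$ on $\widetilde\Gamma^-_i$ by~(\ref{st27}), the right-hand side is exactly $\Vert\eta\Vert^2_{\widetilde\Gamma^-_i,(\widetilde u\vert\widetilde n)}$. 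Hence $-\tfrac12\Vert c^{i,\eta}\Vert^2_{\widetilde\Gamma^+_i,(\widetilde u\vert\widetilde n)}=\tfrac12\Vert c^{i,\eta}\Vert^2_{\partial Q_i^+,(\widetilde u\vert\widetilde n)}-\tfrac12\Vert\eta\Vert^2_{\widetilde\Gamma^-_i,(\widetilde u\vert\widetilde n)}$, and substituting this into the last display yields
$$
\bigl\langle S_i\eta,\eta\bigr\rangle_{\wedge}
=\tfrac12\Vert c^{i,\eta}\Vert^2_{\partial Q_i^+,(\widetilde u\vert\widetilde n)}
+\tfrac12\Vert\eta\Vert^2_{\widetilde\Gamma^-_i,(\widetilde u\vert\widetilde n)}
+\tfrac12\Vert\eta-c^{i,\eta}\Vert^2_{\widetilde\Gamma^+_i,(\widetilde u\vert\widetilde n)}
-\tfrac12\Vert\eta\Vert^2_{\widetilde\Gamma^+_i,(\widetilde u\vert\widetilde n)},
$$
which is even an equality; dropping nothing, the claimed inequality follows a fortiori.

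The step I expect to be the main obstacle is the careful bookkeeping of signs in Proposition~\ref{int_bord} when passing from the boundary integrals $\int_{\widetilde\Sigma}(\widetilde u\vert\widetilde n)(\cdot)^2\,\mathrm d\widetilde\sigma$ to the norm notation $\Vert\cdot\Vert^2_{\widetilde\Sigma,(\widetilde u\vert\widetilde n)}$: on the outflow parts $\partial Q_i^+\cup\widetilde\Gamma^+_i$ one has $(\widetilde u\vert\widetilde n)>0$ so the integral equals $+\Vert\cdot\Vert^2$, whereas on the inflow parts $(\widetilde u\vert\widetilde n)<0$ so it equals $-\Vert\cdot\Vert^2$, and these sign flips must be tracked consistently through the flux-balance identity. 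A secondary point worth stating explicitly is the justification that $c^{i,\eta}$ is a genuine weak solution so that Proposition~\ref{int_bord} applies — this uses Theorem~\ref{sol_fa} together with the standing hypothesis $u\in L^1(0,T;BV(\Omega)^d)$ and $\nabla_t\cdot\widetilde u=0$. Everything else is the linear-algebra identity~(\ref{st30}) and the definitions of the trace spaces, which are routine.
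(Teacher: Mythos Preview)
Your proposal is correct and follows exactly the approach the paper sketches: the paper's entire proof is the single sentence ``applying relation~(\ref{st30}), and proposition~\ref{int_bord} to $\langle S_i\eta,\eta\rangle_\wedge$,'' and you have carried out precisely those two substitutions with full details, including the correct use of the boundary conditions $c^{i,\eta}=0$ on $\partial Q_i^-$ and $c^{i,\eta}=\eta$ on $\widetilde\Gamma^-_i$. Your observation that the computation in fact yields an equality (not merely the stated inequality) is accurate under the standing hypotheses.
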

Thus for all $\eta\in\wedge$ one gets $$\Big\langle S\eta,\eta\Big\rangle_{\wedge}\; \geq \frac{1}{2}(\Vert c^{1,\eta}\Vert^2_{\partial Q_1^+,(\widetilde u\vert\widetilde n)} +\Vert c^{2,\eta}\Vert^2_{\partial Q_2^+,(\widetilde u\vert\widetilde n)}  
+ \Vert \eta-c^{1,\eta}\Vert^2_{\widetilde\Gamma^+_1,(\widetilde u\vert\widetilde n)} + \Vert \eta-c^{2,\eta}\Vert^2_{\widetilde\Gamma^+_2,(\widetilde u\vert\widetilde n)})
.$$
Consider now $\eta\in\wedge$ such that $S\eta=0$.  Define $c^\eta$ on $Q$ such that ${c^\eta}_{\mid_{Q_i}}=c^{i,\eta}.$ Then $\gamma^-(c^\eta)=0$ and $\nabla_t\cdot(\widetilde uc^\eta)=0$ in $L^2(Q)$. Applying the theorem \ref{princ_max} to the subproblem (\ref{st26})-(\ref{st27}) with $u$ regular enough then $c^\eta\in H_0(u,Q,\partial Q^-)$ and is the least-squares solution of (\ref{st9})-(\ref{st10}) with $f=0$ and $c_b=0.$ Using the Curved-Poincar\'e inequality (theorem \ref{poincare}) we have $c^\eta=0$ in $L^2(Q),\;\eta=0\in  L^2(\widetilde\Gamma^-\cup\widetilde\Gamma^+,\vert(\widetilde u\vert\widetilde n)\vert\mathrm{d}\widetilde\sigma)$, so $S$ is positive define. 
Thus the following result is proved.
\begin{proposition}
If $u\in L^2(0,T;H^1_0(\Omega)^d),\;\nabla_t\cdot\widetilde u=0$ the Stelov-Poincar\'e operator $S$ realize an isomorphism between $\wedge$ and $S(\wedge)$.
\label{th10}\\
\end{proposition}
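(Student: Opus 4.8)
The plan is to obtain the conclusion from the two properties of $S=S_1+S_2$ already gathered above: $S$ maps $\wedge$ continuously into $\wedge'$, and $S$ is positive definite. Continuity is the content of the discussion around Proposition \ref{int_bord}: since, under $u\in L^1(0,T;BV(\Omega)^d)$ with $\nabla_t\cdot\widetilde u=0$, each STILS-homogeneous extension $c^{i,\eta}$ of (\ref{st26})-(\ref{st27}) is a weak solution, one has $\nabla_t\cdot(\widetilde uc^{i,\eta})=0$ in $L^2(Q_i)$, and Proposition \ref{int_bord} then gives $S_i\eta={\eta_\mid}_{\widetilde\Gamma^-_i}-{{c^{i,\eta}}_\mid}_{\widetilde\Gamma^+_i}\in\wedge'$ with norm controlled by $\Vert\eta\Vert_{\widetilde\Gamma^-_i\cup\widetilde\Gamma^+_i,(\widetilde u\vert\widetilde n)}$. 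Summing Proposition \ref{th9} over $i=1,2$ and cancelling the opposite contributions on $\widetilde\Gamma^+$ and $\widetilde\Gamma^-$ gives the lower bound
$$\Big\langle S\eta,\eta\Big\rangle_{\wedge}\geq\frac{1}{2}\Big(\Vert c^{1,\eta}\Vert^2_{\partial Q_1^+,(\widetilde u\vert\widetilde n)}+\Vert c^{2,\eta}\Vert^2_{\partial Q_2^+,(\widetilde u\vert\widetilde n)}+\Vert\eta-c^{1,\eta}\Vert^2_{\widetilde\Gamma^+_1,(\widetilde u\vert\widetilde n)}+\Vert\eta-c^{2,\eta}\Vert^2_{\widetilde\Gamma^+_2,(\widetilde u\vert\widetilde n)}\Big),$$
which will drive the injectivity step.

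The decisive step is to show that $S$ is injective, which is exactly the positive-definiteness argument already recorded before the statement. Let $\eta\in\wedge$ with $S\eta=0$; pairing with $\eta$ and using the bound above, each of the four nonnegative terms on the right vanishes, so $c^{i,\eta}=0$ on $\partial Q_i^+$, $c^{1,\eta}=\eta$ on $\widetilde\Gamma^+$ and $c^{2,\eta}=\eta$ on $\widetilde\Gamma^-$ (in the $\vert(\widetilde u\vert\widetilde n)\vert\mathrm{d}\widetilde\sigma$ sense). Together with the prescribed inflow data (${\gamma^-_1(c^{1,\eta})}_{\mid_{\widetilde\Gamma^-}}=\eta$ and ${\gamma^-_2(c^{2,\eta})}_{\mid_{\widetilde\Gamma^+}}=\eta$) this makes the traces of $c^{1,\eta}$ and $c^{2,\eta}$ coincide on all of $\widetilde\Gamma^-\cup\widetilde\Gamma^+$. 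Defining $c^\eta$ on $Q$ by ${c^\eta}_{\mid_{Q_i}}=c^{i,\eta}$, the normal traces $(\widetilde uc^{1,\eta}\vert\widetilde n)$ and $(\widetilde uc^{2,\eta}\vert\widetilde n)$ then agree across $\widetilde\Gamma$, so $\widetilde uc^\eta\in H(\widetilde{\mathrm{div}},Q)$ with $\nabla_t\cdot(\widetilde uc^\eta)=0$ in $L^2(Q)$, and $c^\eta\in H(u,Q)$ with $\gamma^-(c^\eta)=0$. Hence $c^\eta$ solves the global STILS problem (\ref{st19}) with $f=0$ and $c_b=0$; since $\vert c^\eta\vert_{H(u,Q)}=0$, the Curved-Poincar\'e inequality (Theorem \ref{poincare}) forces $c^\eta=0$ in $L^2(Q)$ (equivalently, the maximum principle of Theorem \ref{princ_max}, valid for $u\in L^2(0,T;H^1_0(\Omega)^d)$, first gives $c^\eta\in L^\infty(Q)$ and then $0\leq c^\eta\leq 0$), and continuity of $\gamma^-$ yields $\eta=0$ in $L^2(\widetilde\Gamma^-\cup\widetilde\Gamma^+,\vert(\widetilde u\vert\widetilde n)\vert\mathrm{d}\widetilde\sigma)$.

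Being a continuous injective linear map, $S$ then realizes a bijection of $\wedge$ onto its range $S(\wedge)\subset\wedge'$, which is the asserted isomorphism. The step I expect to require the most care is the gluing used in the injectivity argument, namely deducing $c^\eta\in H(u,Q)=\overline{\mathcal{D}(\overline Q)}^{H(u,Q)}$ from the trace-sense equalities on $\widetilde\Gamma^\pm$: this rests on the transmission characterization in $H(\widetilde{\mathrm{div}},Q)$, $(\zeta_1\vert\widetilde n)=(\zeta_2\vert\widetilde n)$ in $(H^{\frac{1}{2}}_{00}(\widetilde\Gamma))'$, together with the dense continuous injection $H^{\frac{1}{2}}_{00}(\widetilde\Gamma^-\cup\widetilde\Gamma^+)\subset L^2(\widetilde\Gamma^-\cup\widetilde\Gamma^+)$ already used for the converse inclusion preceding Proposition \ref{sol2}; the regularity $u\in L^2(0,T;H^1_0(\Omega)^d)$ is precisely what makes $H(u,Q)\cap L^\infty(Q)=W(u,Q)\cap L^\infty(Q)$, so that the glued, bounded $c^\eta$ genuinely belongs to $H(u,Q)$. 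Finally, if one wanted the stronger statement that $S^{-1}$ is continuous on $S(\wedge)$, one would need a lower bound $\Vert S\eta\Vert_{\wedge'}\geq\alpha\Vert\eta\Vert_{\wedge}$; this does not seem to follow from the coercivity estimate above, and would be the genuine difficulty.
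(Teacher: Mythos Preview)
Your argument follows essentially the same route as the paper's: both reduce the claim to the injectivity of $S$, glue the subdomain extensions $c^{i,\eta}$ into a global $c^\eta$, identify $c^\eta$ as the STILS solution with zero data, and conclude via the Curved--Poincar\'e inequality. You are in fact more explicit than the paper at the trace-matching step, extracting from $\langle S\eta,\eta\rangle=0$ the four vanishing norms and hence $c^{1,\eta}=c^{2,\eta}=\eta$ on $\widetilde\Gamma^-\cup\widetilde\Gamma^+$, which is exactly what makes $\widetilde uc^\eta\in H(\widetilde{\mathrm{div}},Q)$.

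One point of logical ordering deserves clarification. In your parenthetical you invoke the maximum principle (Theorem~\ref{princ_max}) as an \emph{alternative} to Curved--Poincar\'e for concluding $c^\eta=0$ once $c^\eta\in H(u,Q)$. In the paper's argument the maximum principle plays a different, earlier role: it is applied on each \emph{subdomain} to the problem (\ref{st26})--(\ref{st27}), using that $\eta\in\wedge\subset L^\infty(\widetilde\Gamma^-\cup\widetilde\Gamma^+)$, to obtain $c^{i,\eta}\in L^\infty(Q_i)$ and hence $c^\eta\in L^\infty(Q)$. This boundedness is precisely what you need \emph{before} invoking $H(u,Q)\cap L^\infty(Q)=W(u,Q)\cap L^\infty(Q)$ to place the glued $c^\eta$ in $H(u,Q)$; your final paragraph speaks of ``the glued, bounded $c^\eta$'' without saying where the bound comes from. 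So the subdomain maximum principle is not an alternative endpoint but the ingredient that closes the gluing step you flagged as delicate. With that ordering fixed, your proof matches the paper's.

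Your closing observation is also accurate: the statement as phrased (isomorphism onto $S(\wedge)$) amounts to injectivity plus continuity, and neither your argument nor the paper's supplies a coercivity bound of the form $\Vert S\eta\Vert_{\wedge'}\geq\alpha\Vert\eta\Vert_\wedge$ that would give continuity of $S^{-1}$.
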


 For $f=0$, we have ${c_\mid}_{\widetilde\Gamma^-\cup\widetilde\Gamma^+}\in\wedge$ and $\chi\in S(\wedge).$ Thus the interface equation admits a unique solution.\\
  Finally the following result is obtained.
\begin{theorem}\label{th_quiv_ddm}
 Assuming $u\in L^2(0,T;H^1_0(\Omega)^d),\;\nabla_t\cdot\widetilde u=0$ and $f=0$ in (\ref{st1}), the interface equation (\ref{st29}) associated to (\ref{st19}) admits a unique solution. Thus if $c\in H(u,Q)$ is the solution of (\ref{st19}) then $({c_\mid}_{Q_1},{c_\mid}_{Q_2})\in (H(u,Q_1)\times H(u,Q_2))\cap L^\infty(Q)$ satisfies (\ref{st21})-(\ref{st25}). Reciprocally if $(c^1,c^2)\in (H(u,Q_1)\times  H(u,Q_2))\cap L^\infty(Q)$ satisfies (\ref{st21})-(\ref{st25}) then $c$ defined on $Q$ such that ${c_\mid}_{Q_i}=c^i$ is the solution of (\ref{st19}). In this sense the two formulations are equivalent.
 \label{th11}\\
 \end{theorem}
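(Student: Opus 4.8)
The plan is to assemble Theorem~\ref{th11} from the ingredients already proved, rather than to do anything new. The statement has two implications plus a uniqueness claim for the interface equation; I would organize the proof in three movements.

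\textbf{Step 1: Uniqueness of the interface equation.} Here I would simply collect what has just been established in the run-up to the theorem. Under the hypotheses $u\in L^2(0,T;H^1_0(\Omega)^d)$, $\nabla_t\cdot\widetilde u=0$, and $f=0$, Proposition~\ref{th10} says $S$ is an isomorphism from $\wedge$ onto $S(\wedge)$. Since $f=0$, the global STILS solution $c$ of (\ref{st19}) lies in $L^\infty(Q)$ (by the maximum principle Theorem~\ref{th3} together with the standing assumption on $c_b$ and $C_b$), so $c_{\mid\widetilde\Gamma^-\cup\widetilde\Gamma^+}\in\wedge$; moreover the right-hand side datum $\chi$ built from the particular extensions $c^{i,*}$ satisfies $\chi\in S(\wedge)$ (with $f=0$ one has $c^{i,*}=0$, so $\chi=0\in S(\wedge)$). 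Hence (\ref{st29}) has a unique solution $\lambda\in\wedge$.

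\textbf{Step 2: Forward implication.} If $c\in H(u,Q)$ solves (\ref{st19}), Proposition~\ref{th7} already gives that $(c_{\mid Q_1},c_{\mid Q_2})\in H(u,Q_1)\times H(u,Q_2)$ satisfies (\ref{st21})--(\ref{st25}); the $L^\infty$ membership of each restriction is inherited from $c\in L^\infty(Q)$. So this direction is essentially a citation of Proposition~\ref{th7} plus the regularity remark.

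\textbf{Step 3: Reciprocal implication.} Let $(c^1,c^2)\in(H(u,Q_1)\times H(u,Q_2))\cap L^\infty(Q)$ satisfy (\ref{st21})--(\ref{st25}). Because each $c^i$ is bounded, the interface trace $\lambda:=c^1_{\mid\widetilde\Gamma^-\cup\widetilde\Gamma^+}=c^2_{\mid\widetilde\Gamma^-\cup\widetilde\Gamma^+}$ lies in $\wedge$ (one needs the matching of the two traces on $\widetilde\Gamma^-\cup\widetilde\Gamma^+$, which follows from (\ref{st25}) exactly as in the discussion preceding Proposition~\ref{th7}, using the $H(\widetilde{\mathrm{div}})$ normal-trace compatibility across $\widetilde\Gamma$ and the density of $H^{1/2}_{00}$ in $L^2$). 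Then $c^i$ decomposes as $c^{i,\lambda}+c^{i,*}$, and (\ref{st25}) forces $\lambda$ to solve the Steklov--Poincar\'e equation (\ref{st29}). By Step 1 this solution is unique, and by Step 2 the trace of the global STILS solution $c$ is itself such a solution; hence $\lambda=\gamma(c)_{\mid\widetilde\Gamma^-\cup\widetilde\Gamma^+}$, and therefore $c^{i,\lambda}+c^{i,*}=c_{\mid Q_i}$ by uniqueness of the subdomain solutions (Lax--Milgram via the local Curved--Poincar\'e inequality, Theorem~\ref{th6}). Gluing gives that the function equal to $c^i$ on $Q_i$ coincides with $c$, hence solves (\ref{st19}).

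\textbf{Main obstacle.} The only genuinely delicate point is the bookkeeping in Step 3: verifying that the glued function actually belongs to $H(u,Q)=\overline{\mathcal D(\overline Q)}^{H(u,Q)}$ and not merely to a piecewise space, and that the interface trace $\lambda$ is admissible, i.e.\ lies in $\wedge$ (which requires $\nabla_t\cdot(\widetilde u\psi^\lambda)\in L^\infty$ for some extension $\psi^\lambda$, supplied here by $c$ itself since $f=0$ gives $\nabla_t\cdot(\widetilde uc)=0$). Both are handled by the $H(\widetilde{\mathrm{div}},Q)$ gluing criterion $(\zeta_1\vert\widetilde n)=(\zeta_2\vert\widetilde n)$ on $\widetilde\Gamma$ together with Proposition~\ref{th5} and the weak-solution identity of Theorem~\ref{th4}, so it is really a matter of citing the right earlier result in the right order rather than proving something new.
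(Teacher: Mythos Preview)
Your outline is exactly the paper's own argument: Theorem~\ref{th11} is stated there as the conclusion of the preceding development, with the forward direction being Proposition~\ref{th7}, the interface uniqueness being Proposition~\ref{th10} together with $\chi\in S(\wedge)$, and the reciprocal obtained by identifying the Steklov--Poincar\'e trace. One slip to fix: in Step~1 you assert that $f=0$ forces $c^{i,*}=0$ and hence $\chi=0$, but $c^{i,*}$ still carries the nonzero Dirichlet datum $c_b$ on $\partial Q_i^-$, so in general $c^{i,*}\neq 0$ and $\chi\neq 0$; the correct justification that $\chi\in S(\wedge)$ is the one you already use in Step~3, namely that the trace of the global solution $c$ on $\widetilde\Gamma^-\cup\widetilde\Gamma^+$ furnishes a preimage $S(c_{\mid\widetilde\Gamma^-\cup\widetilde\Gamma^+})=\chi$.
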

 \begin{remark}
 This analysis can be generalized to the multi-domains case where $\overline\Omega=\bigcup_{i=1}^P\overline\Omega_i,\;\Omega_i\cap\Omega_j=\emptyset,\; \overline Q=\bigcup_{i=1}^P\overline Q_i,\;\partial Q_i\cap\partial Q_j=\widetilde\Gamma_{ij},\;Q_i=\Omega_i\times(0,T)$, and
 $c\in H(u,Q)$ is a solution of (\ref{st19}) if and only if $ ({c_{\mid}}_{Q_1},{c_{\mid}}_{Q_2},\cdots,{c_{\mid}}_{Q_P})=(c^1,c^2,\cdots,c^P)
 \in \prod_{i=1}^PH(u,Q_i)$ is a multi-domains STILS solution, i.e
 \begin{eqnarray}
\nonumber
\int_{Q_i} \nabla_t\cdot(\widetilde uc^i)\nabla_t\cdot(\widetilde u\varphi^i)\;\mathrm{dxdt}&=&\int_{Q_i} f\nabla_t\cdot(\widetilde u\varphi^i)\;\mathrm{dxdt}\;\;i=1,\cdots,P
 \nonumber\\
 c^i& = & c_{b,i} \qquad\qquad \mathrm{on}\;\; \partial Q^-_i
\nonumber \\
c^{i}&=&c^j\qquad\qquad\;\; \mathrm{on}\;\;\widetilde\Gamma_{ij}\neq\emptyset,\;j=1,\cdots,P.
\nonumber
\end{eqnarray}
for all $\varphi_i\in H_0(u,Q_i,\partial Q_i^-\cup\widetilde\Gamma^-_{ij}).$
\end{remark}
 \\ 
 
 In the sequel ${\cal{M}}$ is some constant which do not depend on $j$.
  \subsubsection{Iteration-by-subdomain scheme }
We propose now a parallel iterative procedure to solve the two-domains STILS formulation (\ref{st21})-(\ref{st25}).  Consider for $i=1,2$
$$V_i^-=\{\varphi^i\in H_0(u,Q_i,\partial Q_i^-),\;\nabla_t\cdot(\widetilde u\varphi_i)\in L^\infty(Q_i),\;{\gamma_i}^-(\varphi_i)\in L^2(\widetilde\Gamma^-_i,\vert(\widetilde u\vert\widetilde n)\vert\mathrm{d}\widetilde\sigma)\},$$
$$G_i^-={\gamma_i^-}(V_i^-).$$
For $j\geq 0,$ assuming $ ({c^{1,j}}_{\mid_{\tilde\Gamma^+}}, {c^{2,j}}_{\mid_{\tilde\Gamma^-}})\in G^-_2\times G^-_1,$
find $c^{1,j+1}$ and $c^{2,j+1}$ respectively  the least-squares solution of
\begin{eqnarray}
\nabla_t\cdot(\widetilde uc^{1,j+1})&=&f
\label{st31}\\
c^{1,j+1} &=&c_{b,1}\qquad\mathrm{for}\;(x,t)\;\;\mathrm{on}\;\partial Q^-_1
\label{st32}\\
c^{1,j+1}&=&c^{2,j}
\qquad \mathrm{for}\;(x,t)\;\;\mathrm{on}\;\tilde\Gamma^-
\label{st33}
\end{eqnarray}
and 
\begin{eqnarray}
\nabla_t\cdot(\widetilde uc^{2,j+1})&=&f
\label{st34}\\
c^{2,j+1} &=&c_{b,2}\qquad 
\mathrm{for}\;(x,t)\;\;\mathrm{on}\;\partial Q^-_2 
\label{st35}\\
c^{2,j+1}&=&c^{1,j}
\qquad 
\mathrm{for}\;(x,t)\;\;\mathrm{on}\;\tilde\Gamma^+.
\label{st36}
\end{eqnarray}
In the STILS sense these subproblems admit unique solutions. 
To analyze the convergence of this substructuring algorithm, let us remark that the subproblem (\ref{st31})-(\ref{st33}) can be decomposed into STILS-homogeneous and Dirichlet-homogeneous parts
$$c^{1,j+1}=c^{1,j+1,{\lambda^j_2}}+c^{1,*},\;\;\lambda_2^j={{c^{2,j}}_\mid}_{\widetilde\Gamma^-}$$
where the STILS-homogeneous part $c^{1,j+1,{\lambda^j_2}}\in H_0(u,Q_1,\partial Q^-_1)$ is defined as the least-squares solution of 
\begin{eqnarray}
\nabla_t\cdot(\widetilde uc^{1,j+1,{\lambda^j_2}})&=&0
\label{st37}\\
c^{1,j+1,{\lambda^j_2}} &=&0\qquad
\mathrm{for}\;(x,t)\;\;\mathrm{on}\;\partial Q^-_1
\label{st38}\\
c^{1,j+1,{\lambda^j_2}}&=&c^{2,j}\;\quad
\mathrm{for}\;(x,t)\;\;\mathrm{on}\; \tilde\Gamma^-
\label{st39}
\end{eqnarray}
and the Dirichlet-homogeneous part $c^{1,*}\in H_0(u,Q_1,\tilde\Gamma^-_1) $ is the least-squares solution of 
\begin{eqnarray}
\nabla_t\cdot(\widetilde uc^{1,*})&=&f
\label{st40}\\
c^{1,*} &=&c_{b,1} \qquad
\mathrm{for}\;(x,t)\;\;\mathrm{on}\;\partial Q^-_1
\label{st41}\\
c^{1,*}&=&0\qquad
\quad
\mathrm{for}\;(x,t)\;\;\mathrm{on}\; \tilde\Gamma^-.
\label{st42}
\end{eqnarray}
Assuming $u\in L^1(0,T;BV(\Omega)^d),\;\nabla_t\cdot\widetilde u=0$, we have $\nabla_t\cdot(\widetilde uc_1^{j+1,{\lambda^{j}_2}})=0$ in $L^2(Q_1)$ (theorem \ref{sol_fa})
and 
$$\Vert\nabla_t\cdot(\widetilde uc^{1,j+1,\lambda^j_2})\Vert^2_{L^2(Q_1)}+ \Vert c^{1,j+1,\lambda^j_2}\Vert_{\widetilde\Gamma^-,(\widetilde u\vert\widetilde n)}^2\leq \Vert c^{2,j}\Vert_{\widetilde\Gamma^-,(\widetilde u\vert\widetilde n)}^2.$$
By the trace inequality (proposition \ref{trace_loc}) , the Curved-Poincar\'e inequality (theorem \ref{poincare_loc}) and the  proposition \ref{int_bord}
$$\Vert c^{1,j+1,\lambda^j_2}\Vert^2_{\widetilde\Gamma^+,(\widetilde u\vert\widetilde n)}+ \Vert c^{1,j+1,\lambda^j_2}\Vert^2_{H(u,Q_1)}\leq ({\cal{M}}(T))^{-1}\Vert c^{2,0}\Vert_{\widetilde\Gamma^-,(\widetilde u\vert\widetilde n)}^2,$$$\mathrm{with}\;
{\cal{M}}(T)=\frac{1}{4}\mathrm{min}({\cal{C}}(T),1).$\\

In the same way, there exists a unique least-squares solution of (\ref{st34})-(\ref{st36}) given by $c^{2,j+1}=c^{2,j+1,{\lambda^j_1}}+c_2^*\in  H(u,Q_2)$ such that
$$\Vert c^{2,j+1,\lambda^j_1}\Vert^2_{\widetilde\Gamma^-,(\widetilde u\vert\widetilde n)}+ \Vert c^{2,j+1,\lambda^j_1}\Vert^2_{H(u,Q_2)}\leq ({\cal{M}}(T))^{-1}\Vert c^{1,0}\Vert_{\widetilde\Gamma^+,(\widetilde u\vert\widetilde n)}^2.$$
Thus the following estimate result is proved.
 \begin{proposition}\label{estim_iter}
 There exists a constant ${\cal{M}}>0$ depending on $T$ such that
$\Vert c^{i,j+1}\Vert^2_{\widetilde\Gamma^+_i,(\widetilde u\vert\widetilde n)}+ \Vert c^{i,j+1}\Vert^2_{H(u,Q_i)}\leq {\cal{M}},\;i=1,2\;\mathrm{for}\;\mathrm{all}\;j\geq 0$.
\label{th12}
\end{proposition}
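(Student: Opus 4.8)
The plan is to establish Proposition~\ref{estim_iter} by controlling each iterate through its decomposition into a Dirichlet-homogeneous part and a STILS-homogeneous part, and then showing that the interface data feeding the STILS-homogeneous parts is bounded uniformly in $j$. First I would recall that by the construction already laid out in the excerpt, $c^{1,j+1}=c^{1,j+1,\lambda_2^j}+c^{1,*}$ with $\lambda_2^j={c^{2,j}}_{\mid_{\widetilde\Gamma^-}}$, and likewise $c^{2,j+1}=c^{2,j+1,\lambda_1^j}+c^{2,*}$ with $\lambda_1^j={c^{1,j}}_{\mid_{\widetilde\Gamma^+}}$. The Dirichlet-homogeneous parts $c^{i,*}$ solve problems that do not depend on $j$, so by Lax-Milgram together with the Curved-Poincar\'e inequality (Theorem~\ref{poincare_loc}) and the local trace estimate (Proposition~\ref{trace_loc}) they satisfy fixed bounds $\Vert c^{i,*}\Vert^2_{\widetilde\Gamma^+_i,(\widetilde u\vert\widetilde n)}+\Vert c^{i,*}\Vert^2_{H(u,Q_i)}\leq \mathcal{M}_*$ for some constant depending only on $T$, $f$ and the boundary data. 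Hence it remains to bound the STILS-homogeneous parts uniformly in $j$.

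The key estimate is the energy inequality for the STILS-homogeneous extension: testing (\ref{st37})--(\ref{st39}) with its own solution and using $\nabla_t\cdot(\widetilde uc^{1,j+1,\lambda_2^j})=0$ in $L^2(Q_1)$ (which holds by Theorem~\ref{sol_fa} under the divergence-free hypothesis), one obtains
$$\Vert\nabla_t\cdot(\widetilde uc^{1,j+1,\lambda_2^j})\Vert^2_{L^2(Q_1)}+\Vert c^{1,j+1,\lambda_2^j}\Vert^2_{\widetilde\Gamma^-,(\widetilde u\vert\widetilde n)}\leq \Vert c^{2,j}\Vert^2_{\widetilde\Gamma^-,(\widetilde u\vert\widetilde n)},$$
and then Proposition~\ref{int_bord} converts the $\widetilde\Gamma^-$ control into control of the outflow trace on $\widetilde\Gamma^+$, while Theorem~\ref{poincare_loc} and Proposition~\ref{trace_loc} upgrade this to the full $H(u,Q_1)$-norm, giving
$$\Vert c^{1,j+1,\lambda_2^j}\Vert^2_{\widetilde\Gamma^+,(\widetilde u\vert\widetilde n)}+\Vert c^{1,j+1,\lambda_2^j}\Vert^2_{H(u,Q_1)}\leq (\mathcal{M}(T))^{-1}\Vert c^{2,j}\Vert^2_{\widetilde\Gamma^-,(\widetilde u\vert\widetilde n)}$$
with $\mathcal{M}(T)=\tfrac14\min(\mathcal{C}(T),1)$, and symmetrically for subdomain $2$. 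The crucial observation is that the right-hand side involves the \emph{inflow} trace of the previous iterate on $\widetilde\Gamma^-$, which for $c^{2,j}$ is precisely its Dirichlet-homogeneous data there; since $c^{2,j}=c^{2,j,\lambda_1^{j-1}}+c^{2,*}$ and the STILS-homogeneous summand vanishes on $\widetilde\Gamma^-_2=\widetilde\Gamma^+$ by its boundary condition being imposed only on $\widetilde\Gamma^-_2$... here one must be careful. The cleanest route is to unwind the recursion: $\Vert c^{2,j}\Vert_{\widetilde\Gamma^-,(\widetilde u\vert\widetilde n)}$ is bounded by the $j{=}0$ datum plus the fixed contribution of $c^{2,*}$, because each STILS-homogeneous step is a contraction in the relevant trace norm (the factor $(\mathcal{M}(T))^{-1}$ notwithstanding, the inequality telescopes back to the initial data as written in the excerpt, $\leq \Vert c^{2,0}\Vert^2_{\widetilde\Gamma^-,(\widetilde u\vert\widetilde n)}$ for the homogeneous part). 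Adding the two pieces and absorbing the fixed Dirichlet contribution into a single constant $\mathcal{M}$ depending on $T$ (and on the prescribed initial guesses $c^{1,0},c^{2,0}$ and the data $f$, $c_b$) yields the claimed uniform bound.

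The main obstacle I anticipate is making the telescoping rigorous: one needs to verify that the interface trace of $c^{i,j}$ on its inflow part $\widetilde\Gamma^-_i$ is genuinely controlled by the data at step $0$ and the fixed Dirichlet-homogeneous part, rather than growing with $j$. This requires either (i) showing the iteration map on the interface traces is a strict contraction — which is plausible given the energy inequality loses mass through the $\Vert\cdot\Vert_{\widetilde\Gamma^-}$ term and the volume residual term, as in the Steklov-Poincar\'e positivity of Proposition~\ref{th9} — or (ii) directly bounding $\Vert c^{i,j}\Vert_{\widetilde\Gamma^-_i}$ by $\Vert c^{i,0}\Vert_{\widetilde\Gamma^+_i}$-type quantities using Proposition~\ref{int_bord} applied to each iterate. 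Either way the constants must be tracked to confirm they stay bounded; once that is done, combining with the fixed bound on $c^{i,*}$ and the triangle inequality in $H(u,Q_i)$ and in $L^2(\widetilde\Gamma^+_i,\vert(\widetilde u\vert\widetilde n)\vert\mathrm{d}\widetilde\sigma)$ closes the proof. \qquad$\square$
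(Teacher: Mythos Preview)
Your approach mirrors the paper's exactly: the same decomposition $c^{i,j+1}=c^{i,j+1,\lambda}+c^{i,*}$, the same use of Theorem~\ref{sol_fa} to obtain $\nabla_t\cdot(\widetilde u\,c^{i,j+1,\lambda})=0$ in $L^2$, the same energy inequality, and the same invocation of Proposition~\ref{int_bord}, Theorem~\ref{poincare_loc} and Proposition~\ref{trace_loc} to reach the full $H(u,Q_i)$-norm plus the outflow trace. The paper is equally terse at the telescoping step: it simply replaces $\Vert c^{2,j}\Vert_{\widetilde\Gamma^-,(\widetilde u\vert\widetilde n)}^2$ by $\Vert c^{2,0}\Vert_{\widetilde\Gamma^-,(\widetilde u\vert\widetilde n)}^2$ on the right-hand side of its second displayed estimate with no further justification, so you have correctly located the only non-routine point.

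That said, neither of your proposed closures is convincing as written. Route~(ii) yields only $\Vert c^{1,j+1,\lambda}\Vert_{\widetilde\Gamma^+}\le\Vert c^{2,j}\Vert_{\widetilde\Gamma^-}$; when you pass to the full iterate $c^{2,j}=c^{2,j,\lambda}+c^{2,*}$ the fixed contribution $\Vert c^{2,*}\Vert_{\widetilde\Gamma^-}$ is picked up at every step and a priori accumulates linearly in $j$. Route~(i) fails for the same reason: Proposition~\ref{int_bord} is an \emph{equality}, so the interface map is merely non-expansive, not strictly contractive, and Proposition~\ref{th9} does not supply a contraction factor either. A cleaner way to close the gap, under the extra hypothesis $f=0$ already used in Theorem~\ref{th_quiv_ddm}, is to apply the maximum principle (Theorem~\ref{princ_max}) inductively on each subproblem: if the initial guesses satisfy $\inf c_b\le c^{i,0}\le\sup c_b$, then every iterate does as well, so all interface traces are uniformly bounded in $L^\infty$ and hence in the weighted $L^2$ norms, after which the $H(u,Q_i)$-bound follows from Curved--Poincar\'e exactly as you describe.
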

\\

Therefore there exists a subsequence of $c^{1,j}$ (still denoted by this symbol) converging to some $c^1$ weakly in $H(u,Q_1)$. The same argument is applied to the sequence $c^{2,j}$. This allow us to pass to the limit as $j$ tends to infinity in the STILS forms of (\ref{st31})-(\ref{st36}). This limit is a solution of the two-domain STILS formulation (\ref{st21})-(\ref{st25}).\\
Thus the following convergence result is proved.
\begin{theorem}\label{th_conver}
Under the assumptions of the proposition \ref{sol2}, the sequence $(c^{1,j},c^{2,j})$ defined by the substructuring algorithm (\ref{st31})-(\ref{st36}) admits a subsequence which weakly converges in $H(u,Q_1)\times H(u,Q_2)$ to $(c^1,c^2)$
which satisfies the two-domain STILS form (\ref{st21})-(\ref{st25}).
\label{th13}
\end{theorem}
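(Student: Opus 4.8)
The plan is a weak-compactness argument. By Proposition~\ref{estim_iter} the sequences $(c^{1,j})_{j\geq 1}$ and $(c^{2,j})_{j\geq 1}$ are bounded in the Hilbert spaces $H(u,Q_1)$ and $H(u,Q_2)$, and the interface traces ${c^{i,j}}_{\mid_{\widetilde\Gamma_i^+}}$ are bounded in $L^2(\widetilde\Gamma_i^+,\vert(\widetilde u\vert\widetilde n)\vert\mathrm{d}\widetilde\sigma)$. These spaces being reflexive, by successive extractions over the single index $j$ there is a subsequence, still denoted by $j$, along which
$$c^{1,j}\rightharpoonup c^1,\quad c^{1,j+1}\rightharpoonup \widetilde c^1\;\;\mathrm{in}\;H(u,Q_1),\qquad c^{2,j}\rightharpoonup c^2,\quad c^{2,j+1}\rightharpoonup \widetilde c^2\;\;\mathrm{in}\;H(u,Q_2).$$
The test spaces $H_0(u,Q_i,\partial Q_i^-\cup\widetilde\Gamma_i^-)$ and the multiplier space $\wedge$ do not depend on $j$, so there is no difficulty there.

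Next I would pass to the limit in the weak (STILS) forms of (\ref{st31})--(\ref{st36}). For fixed $\varphi^1\in H_0(u,Q_1,\partial Q_1^-\cup\widetilde\Gamma_1^-)$ the functional $\psi\mapsto\int_{Q_1}\nabla_t\cdot(\widetilde u\psi)\,\nabla_t\cdot(\widetilde u\varphi^1)\,\mathrm{dxdt}$ is linear and, by Cauchy-Schwarz, continuous on $H(u,Q_1)$, hence weakly continuous; since $\int_{Q_1}f\,\nabla_t\cdot(\widetilde u\varphi^1)\,\mathrm{dxdt}$ does not depend on $j$, the weak form of (\ref{st31}) passes to the limit and $\widetilde c^1$ satisfies (\ref{st21}); symmetrically $\widetilde c^2$ satisfies (\ref{st23}). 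The localized trace operators $\gamma_i^{\pm}$ are linear and continuous from $H(u,Q_i)$ into the weighted $L^2$-spaces (Proposition~\ref{trace_loc}), hence weakly continuous; applying them to the boundary and interface conditions (\ref{st32})--(\ref{st33}), (\ref{st35})--(\ref{st36}) and using $\widetilde\Gamma^-=\widetilde\Gamma_1^-=\widetilde\Gamma_2^+$, $\widetilde\Gamma^+=\widetilde\Gamma_1^+=\widetilde\Gamma_2^-$, one gets $\gamma_1^-(\widetilde c^1)=c_{b,1}$ on $\partial Q_1^-$, $\gamma_2^-(\widetilde c^2)=c_{b,2}$ on $\partial Q_2^-$, $\gamma_1^-(\widetilde c^1)=\gamma_2^+(c^2)$ on $\widetilde\Gamma^-$, and $\gamma_2^-(\widetilde c^2)=\gamma_1^+(c^1)$ on $\widetilde\Gamma^+$.

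It remains to identify the limits of two consecutive iterates, i.e. to show $\widetilde c^1=c^1$ and $\widetilde c^2=c^2$. Once this holds, the four relations above become exactly (\ref{st22}), (\ref{st24}) together with $c^1=c^2$ on $\widetilde\Gamma^-$ and on $\widetilde\Gamma^+$; hence $c^1=c^2$ a.e.\ on $\widetilde\Gamma^-\cup\widetilde\Gamma^+$, both members of (\ref{st25}) vanish, (\ref{st25}) holds for all $\mu\in\wedge$, and $(c^1,c^2)\in H(u,Q_1)\times H(u,Q_2)$ solves (\ref{st21})--(\ref{st25}). To carry out the identification I would use the splitting $c^{i,j+1}=c^{i,\lambda^j}+c^{i,*}$ of (\ref{st37})--(\ref{st42}), where $\lambda^j$ is the trace of the neighbouring iterate and $c^{i,*}$ is fixed: the map $\lambda\mapsto c^{i,\lambda}$ (STILS-homogeneous extension, cf.\ (\ref{st26})--(\ref{st27})) is linear and continuous by Lax-Milgram and the Curved-Poincar\'e inequality (Theorem~\ref{poincare_loc}), so the algorithm reduces to an affine iteration on the pair of interface traces $({c^{1,j}}_{\mid_{\widetilde\Gamma^+}},{c^{2,j}}_{\mid_{\widetilde\Gamma^-}})$; passing this affine iteration to the limit and invoking the \emph{quantitative} content of Proposition~\ref{estim_iter} (the telescoping behind the uniform bound, which forces $c^{i,j+1}-c^{i,j}$, and in particular its interface trace, to tend to zero) gives $\widetilde c^i=c^i$.

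The main obstacle is precisely this last step: the algorithm couples $c^{i,j+1}$ with $c^{k,j}$, $k\neq i$, with a one-step lag, and since no contraction is available a priori one must extract a common subsequence for the indices $j$ and $j+1$ and use the a priori estimate \emph{quantitatively}, rather than merely through its boundedness consequence, to make the two weak limits coincide. All the other ingredients are routine: linearity, continuity of the bilinear forms, and the (weak) continuity of the trace operators established in Proposition~\ref{trace_loc}.
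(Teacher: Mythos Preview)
Your overall strategy coincides with the paper's: invoke the uniform bound of Proposition~\ref{estim_iter}, extract weakly convergent subsequences in the Hilbert spaces $H(u,Q_i)$, and pass to the limit in the STILS forms of (\ref{st31})--(\ref{st36}). The paper's argument is in fact much terser than yours; after stating Proposition~\ref{estim_iter} it simply says that a subsequence of $c^{1,j}$ converges weakly to some $c^1$ in $H(u,Q_1)$, likewise for $c^{2,j}$, and that ``this allow[s] us to pass to the limit \ldots\ in the STILS forms of (\ref{st31})--(\ref{st36}).'' No separate discussion of consecutive iterates or of the interface coupling is given.

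You go further than the paper in explicitly isolating the one-step lag (that $c^{i,j+1}$ is driven by $c^{k,j}$, $k\neq i$) and the resulting need to identify the weak limits along $j$ and along $j+1$. That is a genuine point. However, your proposed resolution of it is not supported by what Proposition~\ref{estim_iter} actually contains. In the paper, the proof of that proposition yields only a \emph{uniform} bound: from $\nabla_t\cdot(\widetilde u c^{i,j+1,\lambda})=0$ and Proposition~\ref{int_bord} one gets the non-expansion $\Vert c^{1,j+1,\lambda}\Vert^2_{\widetilde\Gamma^+,(\widetilde u\vert\widetilde n)}\leq \Vert c^{2,j}\Vert^2_{\widetilde\Gamma^-,(\widetilde u\vert\widetilde n)}$ and then a bound by the initial data $\Vert c^{2,0}\Vert^2_{\widetilde\Gamma^-,(\widetilde u\vert\widetilde n)}$. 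There is no telescoping sum and no statement that $c^{i,j+1}-c^{i,j}\to 0$. So the ``quantitative content'' you invoke is not available from Proposition~\ref{estim_iter} as written. On this step you are not diverging from the paper so much as making explicit a difficulty that the paper's two-line argument leaves unaddressed; but your own remedy for it is not justified by the cited proposition.
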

\\
\begin{remark} 
If $\widetilde\Gamma^-=\emptyset$ or $\widetilde\Gamma^+=\emptyset,$  
the substructuring iterative scheme (\ref{st31})-(\ref{st36}) converges in one iteration.
\end{remark}
\section{Time-marching approach} 
To simplify the notations we suppose $c_{00}=0,$ the general case  being obtained by superposition. Let now decompose the time domain $(0,T)$ into M parts
$$t_0=0<t_1<\cdots\cdots<t_M=T$$
and put $$\overline Q=\bigcup_{k=1}^M\overline Q^k,\;\; Q^k=\Omega\times (t_{k-1},t_k)$$
In the space-time description the flow has a constant direction at the interface $\partial Q^{k-1}\cap \partial Q^k=\Omega\times\{t_{k-1}\}$ then with the same assumptions of proposition \ref{sol2} it's seen that $c\in H(u,Q)$ is the solution of (\ref{st19}) if and only if for $k=1,\cdots,M, \;c_{k}={c_\mid}_{Q^k}\in H(u,Q^{k})$  is the unique solution of 
\begin{eqnarray}
\int_{Q^k}\nabla_t\cdot(\widetilde uc_k)\nabla_t\cdot(\widetilde u\varphi_k)\;\mathrm{dtdx} &=&\int_{Q^k} f\nabla_t\cdot(\widetilde u\varphi_k)\;\mathrm{dtdx}
\label{st43}\\
c_k(x,t_k)&=&c_{k-1}\;\;\;\mathrm{for}\;x\;\mathrm{on}\;\Omega
\label{st44}
\end{eqnarray}
for all $\varphi_k\in H_0(u,Q^k,\partial Q^{k,-}).$\\

In the same way $(c^1,c^2)\in H(u,Q^1)\times H(u,Q^2)$ is the solution of (\ref{st21})-(\ref{st25}) if and only if 
 for $k=1,\cdots,M,\;(c^1_{k},c^2_{k})=({c^1_\mid}_{Q_1^k},{c^2_\mid}_{Q_2^k})\in H(u,Q^k_1)\times H(u,Q^k_2)$ is the unique solution of
\begin{eqnarray}
\int_{Q^k_1}\nabla_t\cdot(\widetilde uc^1_k)\nabla_t\cdot(\widetilde u\varphi^1_k)\;\mathrm{dtdx} &=&\int_{Q^k_1} f\nabla_t\cdot(\widetilde u\varphi^1_k)\;\mathrm{dtdx}
\label{st45}\\
c^1_k(x,t_k)&=&c^1_{k-1}\;\;\;\mathrm{for}\;x\;\mathrm{on}\;\Omega_1
\label{st46}
\\
{{c^1_k}_\mid}_{\widetilde\Gamma^k}&=&{{c^2_k}_\mid}_{\widetilde\Gamma^k}
\label{st47}\\
\int_{Q^k_2}\nabla_t\cdot(\widetilde uc^2_k)\nabla_t\cdot(\widetilde u\varphi^2_k)\;\mathrm{dtdx} &=&\int_{Q^k_2} f\nabla_t\cdot(\widetilde u\varphi^2_k)\;\mathrm{dtdx}
\label{st48}\\
c^2_{k}(x,t_k)&=&c^2_{k-1}\;\;\;\mathrm{for}\;x\;\mathrm{on}\;\Omega_2
\label{st49}
\end{eqnarray}
for all $(\varphi^1_k,\varphi^2_k)\in H_0(u,Q^k_1,\partial Q^{k,-}_1)\times H_0(u,Q^k_2,\partial Q^{k,-}_2)$.\\

Where
$$\overline Q^k=\overline Q^k_1\cup\overline Q^k_2,\;\overline Q^k_i=\Omega_i\times(t_{k-1},t_k),\;i=1,2,\;\mathrm{and}\;\widetilde\Gamma^{k}=\partial Q_1^k\cap\partial Q_2^k$$

Using theorem \ref{th_quiv_ddm}, one shows that 
the multi-domains forms (\ref{st43})-(\ref{st44}) and (\ref{st45})-(\ref{st49}) are equivalent. Moreover for $k=1,\cdots,M$, the solution of (\ref{st45})-(\ref{st49}) can be obtained by the iterative scheme (\ref{st31})-(\ref{st36}).

In the sequel we show how to separate the two dimensions space and time to solve the above multi-domain problems with finite element discretizations. 


\section{Finite element discretizations in time and space} In this section we assume that $\nabla_t\cdot\widetilde u=0,$ and the equation (\ref{st9}) becomes
    $$(\nabla_tc\vert\widetilde u)=f.$$ 
Then the different formulations 
(\ref{st19}) and (\ref{st21})-(\ref{st25}) are now performed by separated finite element discretizations in the following way. 
\\
 
For the space dimension, we consider $\{\mathcal{P}_h(\Omega)\}_h$ be a family of regular and quasi-uniform triangulations of $\Omega,$ i.e satisfying the following conditions: there are two positive constant $\alpha$ and $\sigma$ independent of $h={\max}_{\mathcal{K}\in\mathcal{P}_h(\Omega)} h_{\mathcal{K}}$ such that for all $\mathcal{K}\in\mathcal{P}_h(\Omega)$\\
$(i)\;\;h_{\mathcal{K}}\geq\sigma h$\\
$(ii)\;$the angles of $\mathcal{K}$ are bounded from below by $\alpha$\\
\\
where $h_{\mathcal{K}}$ is the diameter of $\mathcal{K}$.
This triangulation can be induced by two independent grids $\mathcal{P}_{1h}$ and $\mathcal{P}_{2h}$ defined on the subdomains $\Omega_1$ and $\Omega_2$, and which are compatible on $\Gamma$ (that is they share the same edges therein). \\
Let $\{\psi_1\cdots\psi_N\}$ be the basis of a finite dimensioned subspace $\mathcal{V}_h\subset H(u,Q),$ where $\psi_i$ is the polynomial function which interpolates the pairs \\$(x_1,0),(x_2,0),\cdots,(x_i,1),\cdots,(x_N,0), $ ($x_1,x_2,\cdots,x_i,\cdots,x_N$ are the nodes of the grid $\{\mathcal{P}_h(\Omega)\}_h$).  \\

For the time dimension, the domain $[0,T]$ is divided  into $n$ subintervals \\$[t^0,t^1],\;[t^1,t^2],\;\cdots,[t^{n-1},t^n]$ as follows
$$0=t^0<t^1<\cdots<t^{n-1}<t^n=T$$ and for each $(t^{k-1},t^k),\;k=1,\cdots,n$ we consider affine basis functions
$$a_{k-1}(t)=\frac{1}{\tau}(t^{k}-t),\;\;a_{k}(t)=\frac{1}{\tau}(t-t^{k-1})$$
with $t\in I^k=[t^{k-1},t^{k}],\;\tau=t^{k}-t^{k-1}.$ Let $C_2$ be the $2-$dimensional subspace of $\mathcal{C}[t^{k-1},t^{k}]$ generated by $a_{k-1}$ and $a_{k}$.\\

Put now $$Q^{I^k}=\Omega\times I^k,\;Q^{I^k}_i=\Omega_i\times I^k,\;i=1,2$$
Then the TMQ1 (Time-Marching Method with Q1 basis functions in time) approximation \cite{besg} of the problem (\ref{st19}) 
consists in finding for $k\ge 1,\;c_h=c^{k}\in H^h=(\mathcal{V}^h\otimes C_2)\cap H(u,Q^{I^k})$ such that
\begin{eqnarray}
\int_{Q^{I^k}}(\nabla_tc_h\vert\widetilde u)(\nabla_t\varphi_h\vert\widetilde u)
\;\mathrm{dtdx} &=&\int_{Q^{I^k}} f (\nabla_t\varphi_h\vert\widetilde u)\;\mathrm{dtdx}
\label{st50}\\
c_h(x,t^{k-1})&=&c^{k-1}\;\;\;\mathrm{for}\;x\;\mathrm{on}\;\Omega
\label{st51}
\end{eqnarray}
for all $\varphi_h\in H^{h,0}=(\mathcal{V}^h\otimes C_2)\cap H_0(u,Q,\partial Q^{I^k,-}).$\\

Consider $\{\psi^{(i)}_1,\cdots,\psi^{(i)}_{N^i}\}$ the set of the basis functions associated with the nodes lying on $\Omega_i,\;i=1,2.$ And $\mathcal{V}^h_{i}$ be the finite dimensioned subspace induced by this set of functions. 
Then the
TMQ1 approximation 
of (\ref{st21})-(\ref{st25}) is expressed as follows:
for $k\ge 1,\;(c^1_h,c^2_h)=(c_1^{k},c_2^{k})\in H_1^h\times H_2^h=\big((\mathcal{V}^h_1\otimes C_2)\cap H(u,Q^{I^k}_1)\big)\times \big((\mathcal{V}^h_2\otimes C_2)\cap H(u,Q^{I^k}_2)\big)$ such that
\begin{eqnarray}
\int_{Q^{I^k}_1}(\nabla_tc^1_h\vert\widetilde u)(\nabla_t\varphi^1_h\vert\widetilde u)
\;\mathrm{dtdx} &=&\int_{Q^{I^k}_1} f(\nabla_t\varphi^1_h\vert\widetilde u)\;\mathrm{dtdx}
\label{st52}\\
c^1_h(x,t^{k-1})&=&c_1^{k-1}\;\;\;\mathrm{for}\;x\;\mathrm{on}\;\Omega_1
\label{st53}
\\
{{c^1_h}_\mid}_{\widetilde\Gamma^{I^k}}&=&{{c^2_h}_\mid}_{\widetilde\Gamma^{I^k}}
\label{st54}\\
\int_{Q^{I^k}_2}(\nabla_tc^2_h\vert\widetilde u)(\nabla_t\varphi^2_h\vert\widetilde u)
\;\mathrm{dtdx} &=&\int_{Q^{I^k}_2} f(\nabla_t\varphi^2_h\vert\widetilde u)\;\mathrm{dtdx}
\label{st55}\\
c^2_h(x,t^{k-1})&=&c_2^{k-1}\;\;\;\mathrm{for}\;x\;\mathrm{on}\;\Omega_2
\label{st56}
\end{eqnarray}
$(\varphi^1_h,\varphi^2_h)\in H_1^{h,0}\times H_2^{h,0}=\big((\mathcal{V}^h_1\otimes C_2)\cap H_0(u,Q^{I^k}_1,\partial Q^{I^k,-}_1\cup\widetilde\Gamma^{I^k,-})\big)\times \big((\mathcal{V}^h_2\otimes C_2)\cap H_0(u,Q^{I^k}_2,\partial Q^{I^k,-}_2\cup\widetilde\Gamma^{I^k,+})\big)$.\\
where
$\widetilde\Gamma^{I^k,-}$ and $\widetilde\Gamma^{I^k,+}$ are respectively the inflow and outflow part of $\widetilde\Gamma^{I^k}.$ \\
For each time subdomain $(t^{k-1},t^k),$ the equivalent result between the discrete problems (\ref{st50})-(\ref{st51}) and (\ref{st52})-(\ref{st56}) is given by

\begin{theorem}\label{equi_disc}
 Assuming $\nabla_t\cdot\widetilde u=0,$ and $f=0$ in (\ref{st1}), 
 then if 
 $c_h\in H^h$ is a solution of (\ref{st50})-(\ref{st51}) then $({{c_h}_\mid}_{Q_1},{{c_h}_\mid}_{Q_2})\in (H_1^{h}\times H_2^{h})$ satisfies (\ref{st52})-(\ref{st56}). Reciprocally if $(c_h^1,c^2_h)\in H_1^h\times H_2^h$ satisfies (\ref{st52})-(\ref{st56}) then $c_h$ defined on $Q^k$ such that ${{c_h}_\mid}_{Q^{I^k}_i}=c^i_h$ is a solution of (\ref{st50})-(\ref{st51})\label{th14}.
\end{theorem}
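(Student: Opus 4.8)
The plan is to mirror, at the discrete level, the continuous argument that culminated in Theorem~\ref{th_quiv_ddm}, exploiting the special feature that on each time slab the flow has a constant direction at the space interface, so no interface (Steklov--Poincar\'e) equation needs to be inverted. I would first fix the time subinterval $I^k$ and the incoming data $c^{k-1}$ (resp. $(c_1^{k-1},c_2^{k-1})$), and observe that, because $\widetilde u=(u_1,\dots,u_d,1)^t$ has last component $1>0$, the bottom face $\Omega\times\{t^{k-1}\}$ lies in the inflow boundary $\partial Q^{I^k,-}$ and the top face in the outflow boundary; thus the Dirichlet conditions \eqref{st51}, \eqref{st53}, \eqref{st56} are genuinely prescribed on (part of) the inflow set, exactly the framework for which the discrete spaces $H^h$, $H_i^h$ and their subspaces $H^{h,0}$, $H_i^{h,0}$ were defined.

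Next I would prove the forward implication. Suppose $c_h\in H^h$ solves \eqref{st50}--\eqref{st51}. Its restriction ${c_h}_{\mid Q^{I^k}_i}$ lies in $H_i^h$ because $\mathcal V^h_i$ consists precisely of the basis functions attached to nodes of $\Omega_i$ and the grids are compatible on $\Gamma$, so the trace matching \eqref{st54} is automatic (the two restrictions agree with $c_h$ on $\widetilde\Gamma^{I^k}$, hence with each other). The initial conditions \eqref{st53}, \eqref{st56} are the restrictions of \eqref{st51}. For the variational equalities \eqref{st52} and \eqref{st55}: given $\varphi^1_h\in H_1^{h,0}$, extend it by zero into $Q^{I^k}_2$; the key point is that this zero-extension $\widetilde\varphi_h$ lies in $H^{h,0}$. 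This is where the discrete analogue of the continuity/trace argument of Proposition~\ref{trace_loc} enters --- one needs $\nabla_t\cdot(\widetilde u\widetilde\varphi_h)\in L^2(Q^{I^k})$ with no spurious interface term, which holds because $\varphi^1_h$ vanishes on $\widetilde\Gamma^{I^k,-}$ (its inflow part, from the definition of $H_1^{h,0}$) and, the flow being one-directional across $\widetilde\Gamma^{I^k}$, on the outflow side the jump produces no $L^2(Q)$ contribution to the space-time divergence once $f=0,\ \nabla_t\cdot\widetilde u=0$. Testing \eqref{st50} against $\widetilde\varphi_h$ and using that the integrand is supported in $Q^{I^k}_1$ gives \eqref{st52}; symmetrically for \eqref{st55}.

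For the converse, let $(c^1_h,c^2_h)\in H_1^h\times H_2^h$ satisfy \eqref{st52}--\eqref{st56} and define $c_h$ on $Q^{I^k}$ by gluing. Membership $c_h\in H^h$ again follows from grid compatibility on $\Gamma$ together with \eqref{st54} (the two pieces share the same interface trace, so the glued function is in $\mathcal V^h\otimes C_2$ and its space-time divergence is in $L^2$). Then for any $\varphi_h\in H^{h,0}$, its restrictions $\varphi^i_h:={\varphi_h}_{\mid Q^{I^k}_i}$ lie in $H_i^{h,0}$ (they inherit the vanishing on the inflow parts, and again the one-directionality of the flow across $\widetilde\Gamma^{I^k}$ means the admissibility on the interface portion is consistent); summing \eqref{st52} and \eqref{st55} tested against $\varphi^1_h,\varphi^2_h$ reconstitutes \eqref{st50}, and the initial condition \eqref{st51} is the union of \eqref{st53}, \eqref{st56}.

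The main obstacle is the bookkeeping around the interface $\widetilde\Gamma^{I^k}$: one must check carefully that zero-extension of a subdomain test function is an admissible global test function and, conversely, that restriction of a global test function is admissible on each side --- i.e.\ that the \emph{discrete} spaces close up under these operations exactly as the continuous $H(u,\cdot)$ spaces do. This rests on two facts used repeatedly above: (a) the triangulations $\mathcal P_{1h}$, $\mathcal P_{2h}$ are compatible on $\Gamma$, so the finite element spaces match nodewise across the interface, and (b) on the slab $Q^{I^k}$ the characteristic direction $\widetilde u$ crosses $\widetilde\Gamma^{I^k}$ with a fixed sign, which kills the would-be boundary term in the discrete Green formula (the discrete counterpart of Proposition~\ref{int_bord}) when $\nabla_t\cdot\widetilde u=0$ and $f=0$. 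Once these are in place the equivalence is just the finite-dimensional shadow of Theorem~\ref{th_quiv_ddm}.
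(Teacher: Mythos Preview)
Your argument rests on a claim that is false: you assert (as the ``special feature'' at the outset and again as point (b) at the end) that on each time slab the flow crosses the space interface $\widetilde\Gamma^{I^k}=\Gamma\times I^k$ with a fixed sign. It does not. The normal velocity there is $(\widetilde u\vert\widetilde n)=(u\vert n)$, which in general changes sign along $\Gamma$; the paper explicitly decomposes $\widetilde\Gamma^{I^k}$ into $\widetilde\Gamma^{I^k,-}$ and $\widetilde\Gamma^{I^k,+}$, and the cylinder example in Section~7 has $(u\vert n)=-y$ on $\Gamma=\{0\}\times(-1,1)$, which takes both signs. You have confused the \emph{time} interfaces $\Omega\times\{t^{k-1}\}$ (where the last component of $\widetilde u$ indeed forces a fixed sign, as used in Section~5) with the \emph{space} interface inside each slab.

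This error breaks both directions of your argument. In the forward direction, a test function $\varphi^1_h\in H_1^{h,0}$ is only required to vanish on $\widetilde\Gamma^{I^k,-}$, not on all of $\widetilde\Gamma^{I^k}$; its zero extension to $Q^{I^k}_2$ is therefore discontinuous across $\widetilde\Gamma^{I^k,+}$ and does \emph{not} lie in $\mathcal V^h\otimes C_2$, hence not in $H^{h,0}$. In the converse direction, a global test function $\varphi_h\in H^{h,0}$ has no reason to vanish anywhere on $\widetilde\Gamma^{I^k}$, so its restriction to $Q^{I^k}_i$ is generally \emph{not} in $H_i^{h,0}$; you cannot simply sum \eqref{st52} and \eqref{st55} against those restrictions to recover \eqref{st50}. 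The paper's route is precisely the one you dismissed: one mirrors Theorem~\ref{th_quiv_ddm} at the discrete level, treating the genuinely two-sided interface via the (discrete) Steklov--Poincar\'e equation. The forward implication uses the weak-solution property (the analogue of Theorem~\ref{sol_fa} and Proposition~\ref{sol2}), and the converse uses injectivity of the discrete Steklov--Poincar\'e operator (the analogue of Proposition~\ref{th10}), so that the unique two-domain solution must coincide with the restrictions of the global one.
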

\\

Finally as in the continuous level (section 5) the above TMQ1 approximations are equivalent.\\

In order to solve the two-domains discrete formulation (\ref{st52})-(\ref{st56}), it's convenient to apply 
the iteration-by-subdomain scheme.\\
Thus for $j\geq 0$ until convergence,  
find ${c^1_h}^{,j+1}=c^{k,j+1}_1\in H_1^h$ and ${c^2_h}^{,j+1}=c^{k,j+1}_2\in H_2^h$ such that
 \begin{eqnarray}
\int_{Q^{I^k}_1} (\nabla_tc^{1,j+1}_h\vert\widetilde u)(\nabla_t\varphi^1_h\vert\widetilde u)&=&\int_{Q^{I^k}_1}f(\nabla_t\varphi^1_h\vert\widetilde u),\;\mathrm{for}\;\mathrm{all}\;\varphi^1_h\in H_1^{h,0}
\label{st57}\\
c_h^{1,j+1}(x,t^{k-1}) &=&c^{k-1}_{1}
\label{st58}\\
{{c_h^{1,j+1}}_{\mid}}_{\widetilde\Gamma^{I^k,-}}&=&c_h^{2,j}
\label{st59}
\end{eqnarray}
and 
\begin{eqnarray}
\int_{Q^{I^k}_2} (\nabla_tc^{2,j+1}_h\vert\widetilde u)(\nabla_t\varphi^2_h\vert\widetilde u)&=&\int_{Q^{I^k}_2}f(\nabla_t\varphi^2_h\vert\widetilde u),\;\mathrm{for}\;\mathrm{all}\;\varphi^2_h\in H_2^{h,0}
\label{st60}\\
c_h^{2,j+1} (x,t^{k-1})&=&c_2^{k-1}
\label{st61}\\
{{c_h^{2,j+1}}_{\mid}}_{\widetilde\Gamma^{I^k,+}}&=&c_h^{1,j}
\label{st62}
\end{eqnarray}
Following the same guideline of the continuous level, we get the estimates
$$\Vert c^{i,j}_h\Vert^2_{\tilde\Gamma^{k,+}_i,(\tilde u\vert\tilde n)}+ \Vert c^{i,j}_h\Vert^2_{H(u,Q_i^{I^k})}\leq {\cal{M}}(\tau),\;\;i=1,2,
$$
and the discrete convergence result is given by
\begin{theorem}\label{conver_dis}
Under the assumptions of the proposition \ref{sol2}, the sequence $(c_h^{1,j},c_h^{2,j})$ defined by the substructuring algorithm (\ref{st57})-(\ref{st62}) admits a subsequence which weakly converges in $H_1^h \times H_2^h$ to $(c_h^1,c_h^2)$
which satisfies the discrete two-domains STILS form (\ref{st52})-(\ref{st56})\label{th15}.\\
\end{theorem}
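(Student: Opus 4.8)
The plan is to mirror, at the discrete level, the a priori estimate and weak-compactness argument used at the continuous level in Theorem~\ref{th_conver}, with the proviso that every function space is now finite-dimensional so that ``weak convergence of a subsequence'' becomes the much softer statement of extracting a convergent subsequence from a bounded sequence in a finite-dimensional normed space. First I would split each iterate using the discrete analogue of the STILS-homogeneous/Dirichlet-homogeneous decomposition: write $c_h^{1,j+1}=c_h^{1,j+1,\lambda_2^j}+c_h^{1,*}$ with $\lambda_2^j={c_h^{2,j}}_{\mid_{\widetilde\Gamma^{I^k,-}}}$, where $c_h^{1,*}\in H_1^h$ solves (\ref{st57})--(\ref{st58}) with homogeneous interface data on $\widetilde\Gamma^{I^k,-}$ and is therefore fixed once and for all (independent of $j$), and the homogeneous part $c_h^{1,j+1,\lambda_2^j}\in (\mathcal{V}_1^h\otimes C_2)\cap H_0(u,Q_1^{I^k},\partial Q_1^{I^k,-})$ solves the discrete problem with zero right-hand side and interface value $c_h^{2,j}$ on $\widetilde\Gamma^{I^k,-}$. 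The same is done symmetrically for $c_h^{2,j+1}=c_h^{2,j+1,\lambda_1^j}+c_h^{2,*}$ with data carried from $\widetilde\Gamma^{I^k,+}$.

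Next I would establish the energy bound. Testing the homogeneous discrete equation against its own solution and invoking the discrete counterparts of Proposition~\ref{int_bord} (which holds verbatim since $\nabla_t\cdot\widetilde u=0$ and the conforming discrete functions belong to $H(u,Q_i^{I^k})$) together with the trace estimate (Proposition~\ref{trace_loc}) and the Curved-Poincar\'e inequality (Theorem~\ref{poincare_loc}), one gets
\begin{equation}
\Vert \nabla_t\cdot(\widetilde u c_h^{1,j+1,\lambda_2^j})\Vert^2_{L^2(Q_1^{I^k})}+\Vert c_h^{1,j+1,\lambda_2^j}\Vert^2_{\widetilde\Gamma^{I^k,-},(\widetilde u\vert\widetilde n)}\;\leq\;\Vert c_h^{2,j}\Vert^2_{\widetilde\Gamma^{I^k,-},(\widetilde u\vert\widetilde n)},
\nonumber
\end{equation}
and, using Proposition~\ref{int_bord} once more to convert the inflow trace norm into the outflow trace norm, an inequality of the form $\Vert c_h^{1,j+1,\lambda_2^j}\Vert^2_{\widetilde\Gamma^{I^k,+},(\widetilde u\vert\widetilde n)}+\Vert c_h^{1,j+1,\lambda_2^j}\Vert^2_{H(u,Q_1^{I^k})}\leq ({\cal M}(\tau))^{-1}\Vert c_h^{2,0}\Vert^2_{\widetilde\Gamma^{I^k,-},(\widetilde u\vert\widetilde n)}$, exactly as in the displayed estimate preceding the theorem. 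Adding the fixed contribution of $c_h^{1,*}$ (and symmetrically for index $2$) yields the uniform-in-$j$ bound $\Vert c_h^{i,j}\Vert^2_{\widetilde\Gamma^{I^k,+}_i,(\widetilde u\vert\widetilde n)}+\Vert c_h^{i,j}\Vert^2_{H(u,Q_i^{I^k})}\leq {\cal M}(\tau)$ announced in the excerpt.

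Finally I would extract the subsequence and pass to the limit. Since $H_1^h\times H_2^h$ is finite-dimensional and the sequence $(c_h^{1,j},c_h^{2,j})$ is bounded in its norm, a subsequence converges (strongly, hence a fortiori weakly) to some $(c_h^1,c_h^2)\in H_1^h\times H_2^h$; along this subsequence the traces on $\widetilde\Gamma^{I^k,\pm}$ also converge by continuity of the (finite-dimensional) trace operators. Writing the iterative scheme (\ref{st57})--(\ref{st62}) in weak STILS form and letting $j\to\infty$ through the subsequence, each bilinear form and each linear functional being continuous on the finite-dimensional spaces, I would obtain that the limit satisfies (\ref{st52}), (\ref{st53}), (\ref{st55}), (\ref{st56}); the interface relations (\ref{st59}) and (\ref{st62}) become, in the limit, ${c_h^1}_{\mid_{\widetilde\Gamma^{I^k,-}}}={c_h^2}_{\mid_{\widetilde\Gamma^{I^k,-}}}$ and ${c_h^2}_{\mid_{\widetilde\Gamma^{I^k,+}}}={c_h^1}_{\mid_{\widetilde\Gamma^{I^k,+}}}$, which together give (\ref{st54}). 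I expect the main obstacle to be the careful bookkeeping in the energy estimate — in particular making sure the discrete homogeneous extensions indeed satisfy $\nabla_t\cdot(\widetilde u\,c_h^{i,j+1,\lambda})=0$ in $L^2$ so that Proposition~\ref{int_bord} applies (this uses that with $\nabla_t\cdot\widetilde u=0$ the equation reduces to $(\nabla_t c_h\vert\widetilde u)=0$ and that the conforming test space contains the solution itself), and tracking the constant ${\cal M}(\tau)$ through the chain of trace and Poincar\'e inequalities so that it is genuinely independent of $j$; the compactness and limit-passage steps are then routine because of finite-dimensionality.
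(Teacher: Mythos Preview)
Your plan mirrors the paper's own argument: the paper simply states that one proceeds ``following the same guideline of the continuous level'' to derive the bound $\Vert c^{i,j}_h\Vert^2_{\widetilde\Gamma^{k,+}_i,(\widetilde u\vert\widetilde n)}+ \Vert c^{i,j}_h\Vert^2_{H(u,Q_i^{I^k})}\leq {\cal M}(\tau)$ and then extracts a weakly convergent subsequence, so your splitting/energy-estimate/compactness outline is exactly the intended route, and your remark that finite-dimensionality makes the compactness and limit-passage steps routine is apt.

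There is, however, a real flaw in your resolution of what you correctly flag as the main obstacle. You assert that the discrete homogeneous extension satisfies $\nabla_t\cdot(\widetilde u\,c_h^{i,j+1,\lambda})=0$ in $L^2$ because ``the conforming test space contains the solution itself''. It does not: $c_h^{1,j+1,\lambda_2^j}$ carries the nonzero interface datum $\lambda_2^j$ on $\widetilde\Gamma^{I^k,-}$, whereas every $\varphi_h^1\in H_1^{h,0}$ is required to vanish there, so you cannot test the discrete equation against its own solution. In the continuous setting this gap is closed by Theorem~\ref{sol_fa}, whose proof genuinely relies on the richness of the infinite-dimensional test space $H_0(u,Q_i,\partial Q_i^-\cup\widetilde\Gamma_i^-)$; that mechanism does not survive the passage to a finite element subspace, and in general the discrete least-squares minimizer has a nonzero residual $(\nabla_t c_h\vert\widetilde u)$. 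Consequently Proposition~\ref{int_bord} is not directly applicable as you invoke it. One way to repair the bound is to retain the full Green identity underlying Proposition~\ref{int_bord} (keeping the cross term $2\int_{Q_i^{I^k}}(\nabla_t c_h\vert\widetilde u)\,c_h$), control the residual via the minimizing property of the STILS solution compared against a discrete lifting of the interface data, and accept a constant that may depend on $h$ and $\tau$ but not on $j$---which is all that is needed here since $h,\tau$ are fixed. The paper's one-line sketch does not spell this out either, but your stated justification cannot be used as written.
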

\\
Higher orders time basis functions can be considered in the above analysis. For example one can to apply the time-marching approach with $Q2-$basis functions \cite{besg} (TMQ2) which are defined on $[0,\tau]$ by
\begin{eqnarray}
\nonumber
a_{k-1}(t)&=&\frac{(t-\frac{\tau}{2})(t-\tau)}{(0-\frac{\tau}{2})(0-\tau)} \\\nonumber
a_{k-\frac{1}{2}}(t)&=&\frac{(t-0)(t-\tau)}{(\frac{\tau}{2}-0)(\frac{\tau}{2}-\tau)}\\\nonumber
a_{k}(t)&=&\frac{(t-0)(t-\frac{\tau}{2})}{(\tau-\frac{\tau}{2})(\tau-\frac{\tau}{2})}
\end{eqnarray}
\begin{remark}
The subproblems (\ref{st57})-(\ref{st59}) and (\ref{st60})-(\ref{st62}) can be approximated differently both in space and time. 
 Indeed: \\
 $1.\;$ For the space domain, the two grids $\mathcal{P}_{1h}$ and $\mathcal{P}_{2h}$ can be choosed independently.\\
 $2.\;$ For the time subdomain $(t^{k-1},t^{k})$, the choice of the basis functions can also be different for the two subproblems, for example one can to adopt the TMQ1 method for 
 (\ref{st57})-(\ref{st59}), and the TMQ2 approximation for 
 (\ref{st60})-(\ref{st62}). 
\end{remark}
\\

In the next section some numerical results are presented to analyze the potential of the iterative scheme (\ref{st57})-(\ref{st62}) to solve the STILS-DDM formulation (\ref{st52})-(\ref{st56}). In this sense the STILS-DDM numerical solution 
and the STILS numerical solution 
are compared for the cylinder rotating example \cite{hans}. 
\section{Numerical results}
We consider the TMQ1 approach to analyze numerically the STILS and STILS-DDM formulations
\subsection{Time integration}
To solve (\ref{st50})-(\ref{st51}) by a TMQ1 method, we choose $\varphi_h$ independent of $a_{k-1}$. So $\varphi_h$ cancels on $t=t^{k-1}$. This is motivated by considering the local problem in the domain $Q^{I^k}=\Omega\times I^k$, with an initial condition (\ref{st51}) on $\Omega\times\{t^{k-1}\}.$ If 
     $$c^k(x,t)=\sum_{l=1}^N\psi_l(x)(c^{k-1}_{l}a_{k-1}(t)+c^{k}_{l}a_{k}(t)),$$
problem (\ref{st50}) becomes in this case, after time integration see \cite{besg}
\begin{eqnarray}
\label{st63}&&\sum_{l=1}^Nc^{k}_{l}\int_{\Omega}\Big[\frac{\tau}{3}(\nabla\psi_l\vert u)(\nabla\psi_i\vert u)+ \frac{1}{2}(\nabla\psi_l\vert u)\psi_i+ \frac{1}{2}(\nabla\psi_i\vert u)\psi_l+ \frac{1}{\tau}\psi_l\psi_i\Big]\mathrm{dx}\\\nonumber\\&=&\sum_{l=1}^Nc^{k-1}_{l}\int_{\Omega}\Big[\frac{-\tau}{6}(\nabla\psi_l\vert u)(\nabla\psi_i\vert u) -\frac{1}{2}(\nabla\psi_l\vert u)\psi_i+ \frac{1}{2}(\nabla\psi_i\vert u)\psi_l+ \frac{1}{\tau}\psi_l\psi_i\Big]\mathrm{dx}\;\;\nonumber
\end{eqnarray}
for all $i$ in $\{1,\cdots,N\}$.
For the STILS-DDM algorithm (\ref{st57})-(\ref{st62}), we choose $\varphi^i_h,\;i=1,2$ independent of $a_{k-1}$, and if 
$$c^{k,j+1}_m(x,t)=\sum_{l=1}^{N^m}\psi^{(m)}_l(x)(c^{k-1}_{m,l}a_{k-1}(t)+c^{k,j+1}_{m,l}a_{k}(t)),\;\;m=1,2,\;j\geq 0.$$
then (\ref{st57})-(\ref{st59}) and (\ref{st60})-(\ref{st62}) become
\begin{eqnarray}
\label{st64}&&\qquad\sum_{l=1}^{N^m}c^{k,j+1}_{m,l}\int_{\Omega_m}\Big[\frac{\tau}{3}(\nabla\psi^{(m)}_l\vert u)(\nabla\psi^{(m)}_i\vert u)+ \frac{1}{2}(\nabla\psi^{(m)}_l\vert u)\psi^{(m)}_i+\\\nonumber\\\nonumber
&& \frac{1}{2}(\nabla\psi^{(m)}_i\vert u)\psi^{(m)}_l+ \frac{1}{\tau}\psi^{(m)}_l\psi^{(m)}_i\Big]\mathrm{dx}
=\sum_{l=1}^{N^m}c^{k-1}_{m,l}\int_{\Omega_m}\Big[\frac{-\tau}{6}(\nabla\psi^{(m)}_l\vert u)(\nabla\psi^{(m)}_i\vert u) - \\\nonumber\\
&&\frac{1}{2}(\nabla\psi^{(m)}_l\vert u)\psi^{(m)}_i+\frac{1}{2}(\nabla\psi^{(m)}_i\vert u)\psi^{(m)}_l+ \frac{1}{\tau}\psi^{(m)}_l\psi^{(m)}_i\Big]\mathrm{dx},
\nonumber
\end{eqnarray}
$m=1,2,$ for all $i$ in $\{1,\cdots,N\}$, and $j\geq 0.$ \\

The related sparse, symmetric, and positive define systems 
(\ref{st63}), (\ref{st64}) are solved by means of the Preconditioned Conjugate Gradient  (PCG) algorithm.
\subsection{Cylinder with a slot example}
 Let consider the cylinder with a slot example taken from \cite{hans}. The domain is the square $\Omega=]-1,1[^2$ and discretized in $100\;\times\;100$ elements, of type $Q1$. 
 The initial condition is  $$c(x,y,0)=
  \left\{
  \begin{tabular}{ll}
  $1 \;\mathrm{if}\;(\vert x\vert>0.05\;\mathrm{or}\;y>0.7)\;\;\mathrm{and}\;\;R\le 3$\\
  $0 \; \mathrm{elsewhere}$\\
  \end{tabular}
 \right.\;
$$
 where $R=\sqrt{x^2+(y-0.5)^2}$, the velocity field has the form $$v(x,y,t)=(-y,x),$$ and the final time is $T=2\pi.$\\
 At final time, i.e. after a complete rotation, the exact solution coincides with the initial condition. 
 The domain is decomposed by two subdomains $$\Omega_1=]-1,0[\;\times\;]-1,1[,\,\Omega_2=]0,1[\;\times\;]-1,1[,\;\Gamma=\{0\}\times]-1,1[.$$
 Here are the results for numerical solutions of equation (\ref{st1}) using the STILS and STILS-DDM methods. 
Thus for $\tau=2\pi/800,\;h=2/100$, the figures \ref{hans1}, \ref{hans2}, and \ref{hans3} show respectively the snapshot of the numerical solutions at $100 th,\,400 th,\,500 th$ step on $800$. 
The cylinder rotating effectuates perfectly. In more the shape of the two slots are similar, and they are preserved compared to the initial condition. 
Thus this example of simulation justifies well that  the time-marching approch of the STILS-DDM procedure even its simplest form (Q1 time basis functions ) is well encouraged to approximate the STILS problem. 
\begin{figure}[H]
\begin{center}
\includegraphics[width=14.cm]{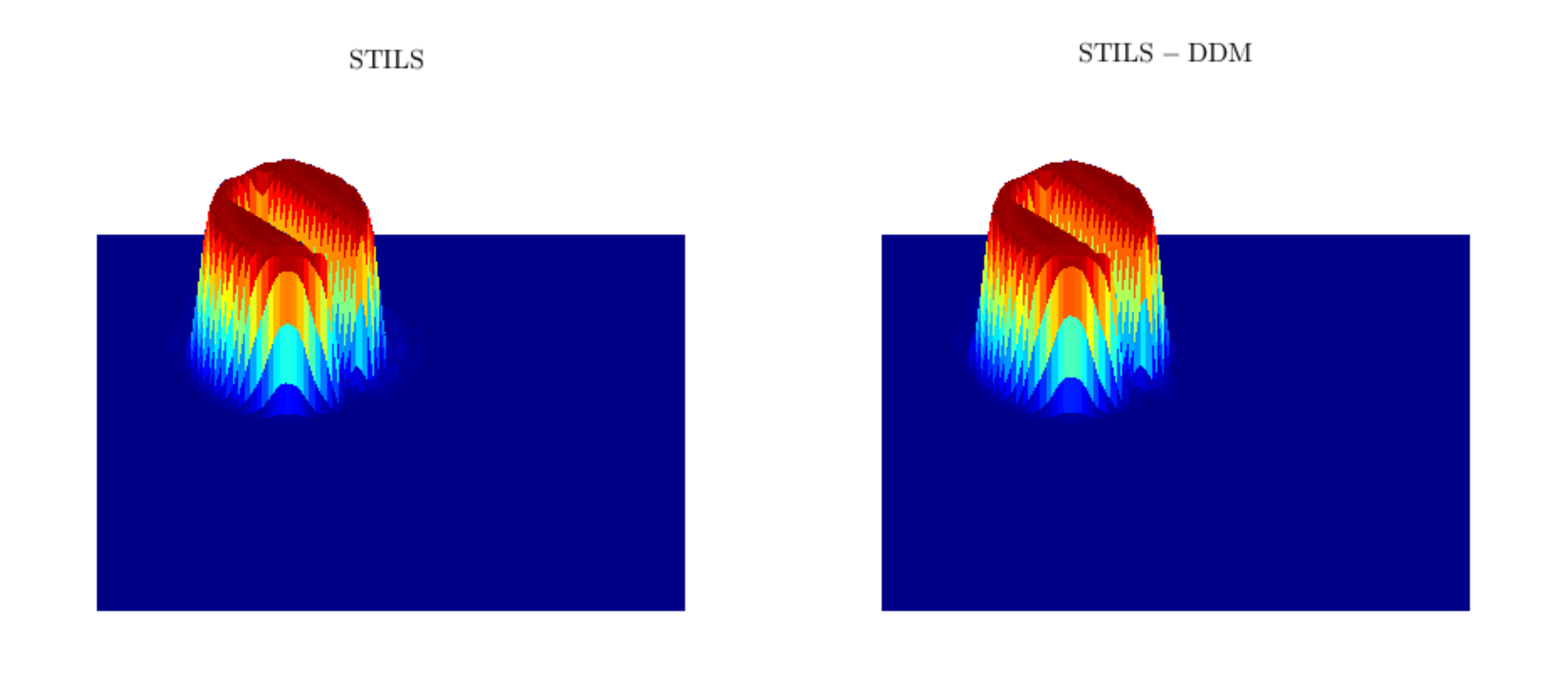}
\caption{TMQ1 numerical solutions of STILS (left), STILS-DDM (right)  at $100 th$ step}
\label{hans1}
\end{center}
\end{figure}
\begin{figure}[H]
\begin{center}
\includegraphics[width=14.cm]{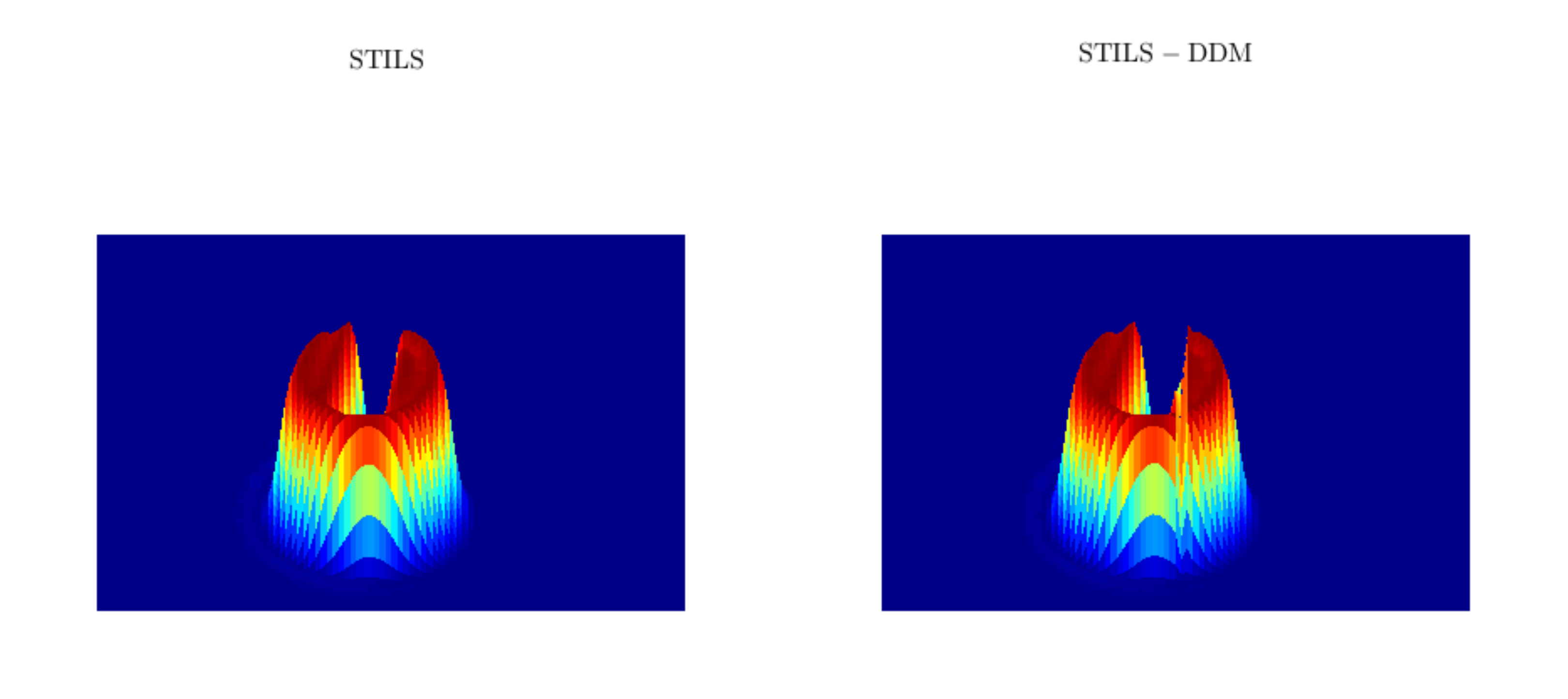}
\caption{TMQ1 numerical solutions of STILS (left), STILS-DDM (right)  at $400 th$ step}
\label{hans2}
\quad\\\quad\\\quad\\\quad\\\quad\\\quad\\\quad\\\quad\\\quad\\\quad\\\quad
\end{center}
\end{figure}
\begin{figure}[H]
\begin{center}
\includegraphics[width=14.cm]{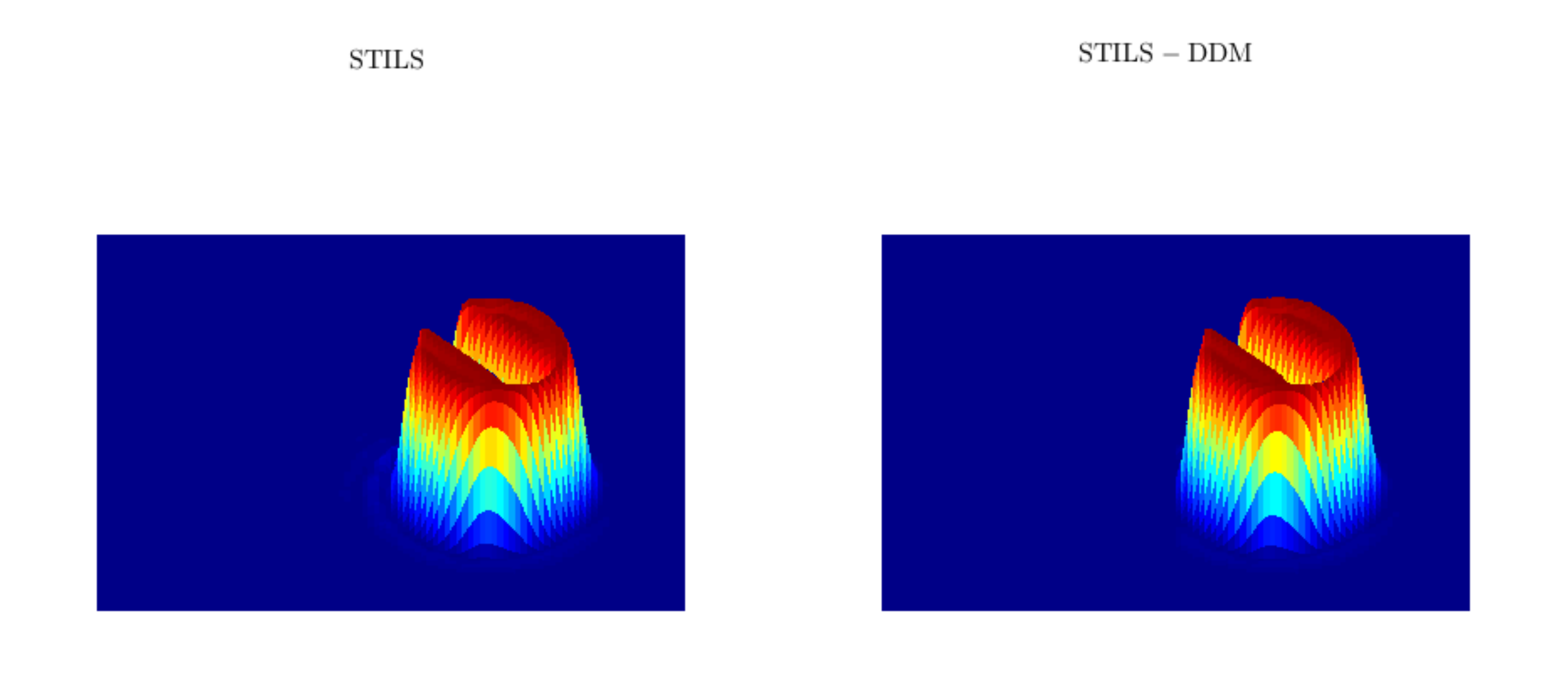}
\caption{TMQ1 numerical solutions of STILS (left), STILS-DDM (right)  at $500 th$ step}
\label{hans3}
\end{center}
\end{figure}
\begin{figure}[H]
\begin{center}
\includegraphics[width=14.cm]{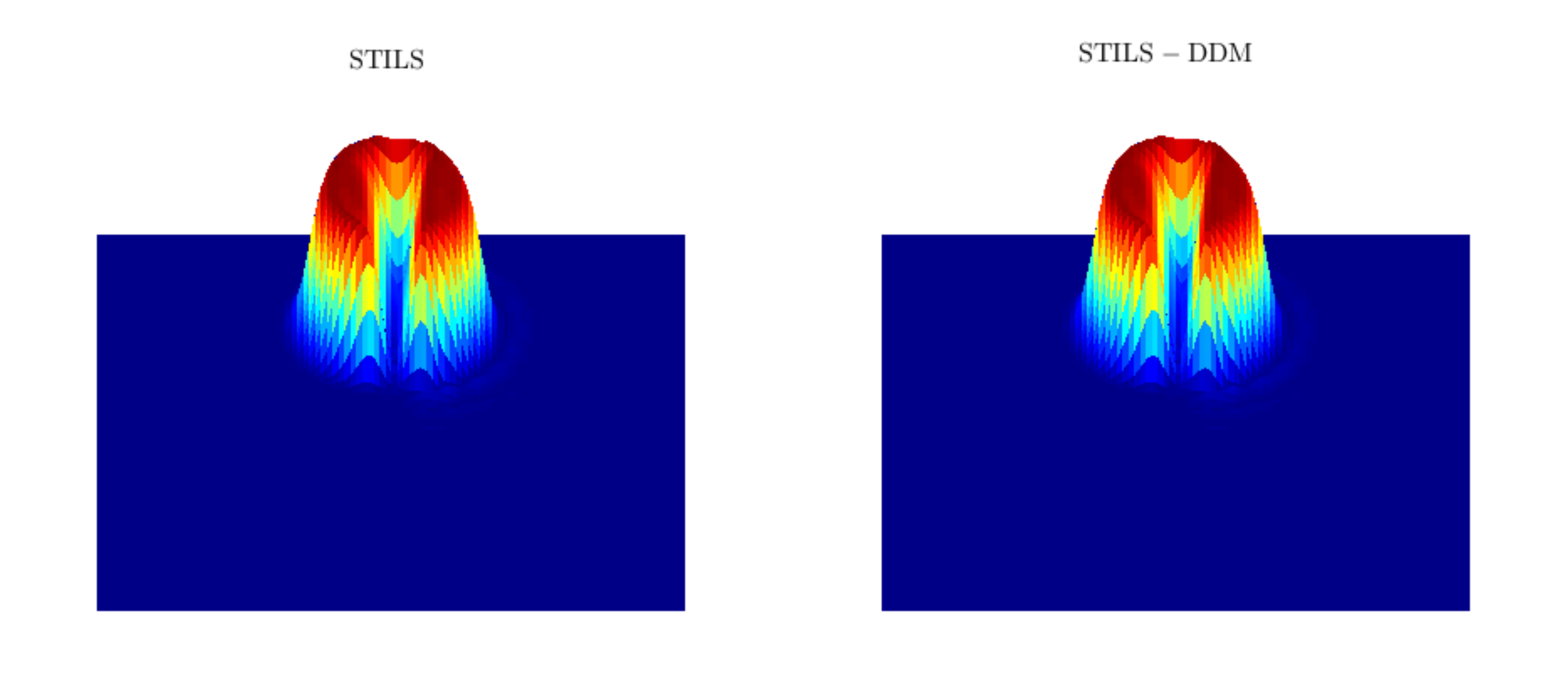}
\caption{TMQ1 numerical solutions of STILS (left), STILS-DDM (right) at  final time}
\label{hans4}
\end{center}
\end{figure}
\begin{remark}
 The STILS-DDM solution presents some small oscillations on the interface which  doesn't appear in the STILS numerical solution see fig. \ref{hans2}. These numerical oscillations are induced by the solving of the two subproblems to obtain the interface condition. However with a small tolerance these oscillations are not so sufficient to create a major difference between the two schemes. 
 %
\end{remark}
\\

The two methods present also some differences and difficulties induced by the two finite element discretizations. 
The following results show the influence of the discretization parameters on the different algorithms. 
\subsubsection{Influence of the discretization parameter $\tau$} 
The relative variation which is the maximal variation of the  integral concentration across the time steps, divided by the minimal value measures in an inverse way the global conservativity. Ideally it should be $0$ since there is no input nor output of concentration and the exact solution inside the unit disk is simply a rotation of the initial condition.
 \begin{table}[H]
\begin{center}
       \begin{tabular}{|l|l|c|r|c|c|c|c|c|c|c|c|clclclclclclclclclcl}
         \hline  
    $\tau$   &Number of iter &(min,max)&(min,max)&relat variat&relat variat 
    \\ 
     &$\qquad\leq$&STILS-DDM&STILS&STILS-DDM&STILS
   \\
       \hline
$2\pi/50$&$\qquad\;2 $&$(-0.33\;,\;0.91)$&$(-0.35\;,\;0.92)$&$3.14\cdot10^{-2}$&$2.65\cdot10^{-2}$
 \\
       $2\pi/100$&$\qquad\;3$&$(-0.39\;,\;1.06)$&$(-0.41\;,\;1.07)$&$1.60\cdot10^{-2}$&$3.98\cdot10^{-3}$
    \\ 
      $2\pi/200$	& $\qquad\;3$& $(-0.23\;,\;1.25)$&$(-0.28\;,\;1.25)$&$4.98\cdot10^{-3}$&$8.70\cdot10^{-4}$
     \\ 
      $2\pi/400$&$\qquad\;4$&$(-0.28\;,\;1.28)$&$(-0.19\;,\;1.28)$&$1.97\cdot10^{-3}$&$3.59\cdot10^{-4}$
       \\ 
      $2\pi/800$&$\qquad\;4$&$(-0.23\;,\;1.21)$&$(-0.19\;,\;1.20)$&$7.44\cdot10^{-4}$&$1.44\cdot10^{-4}$
 \\
         \hline
\end{tabular}
 \end{center} 
 \label{tab1} 
 \caption{Numerical results respecting on $\tau$}
 \end{table}
 \begin{figure}[H]
\begin{center}
\includegraphics[width=8.cm]{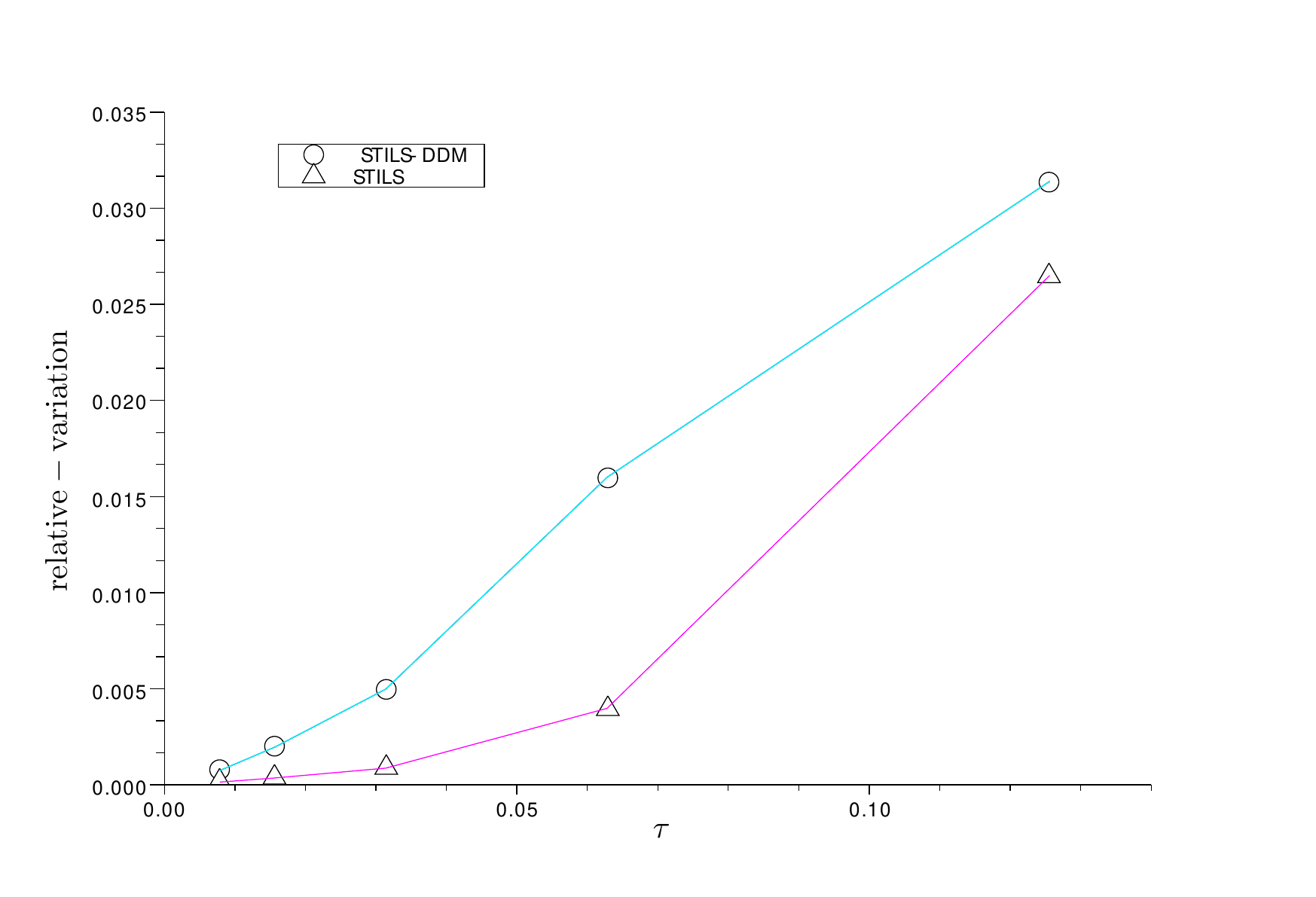}
\caption{Conservativity respecting on $\tau$}
\label{hans7}
\end{center}
\end{figure}
\begin{figure}[H]
\begin{center}
\includegraphics[width=14.cm]{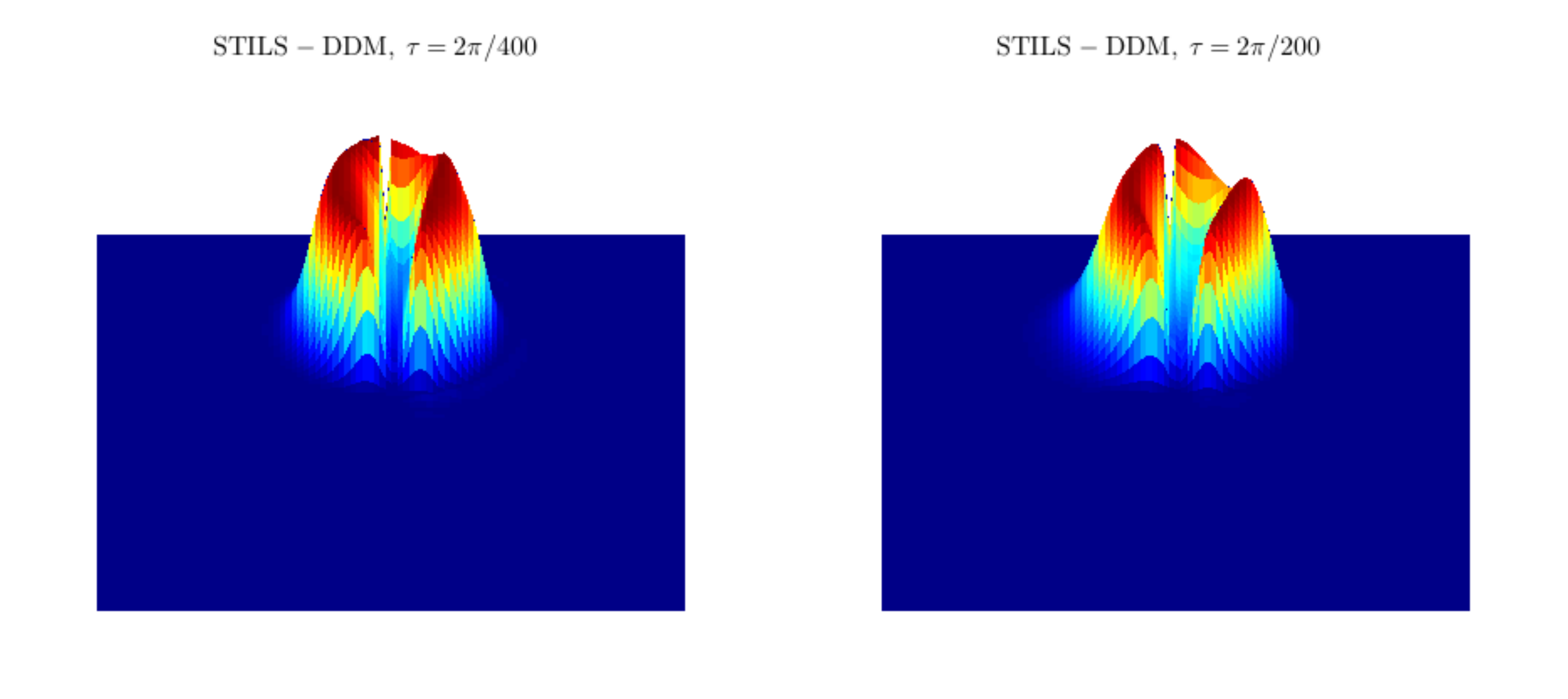}
\end{center}
\end{figure}
\begin{figure}[H]
\begin{center}
\includegraphics[width=14.cm]{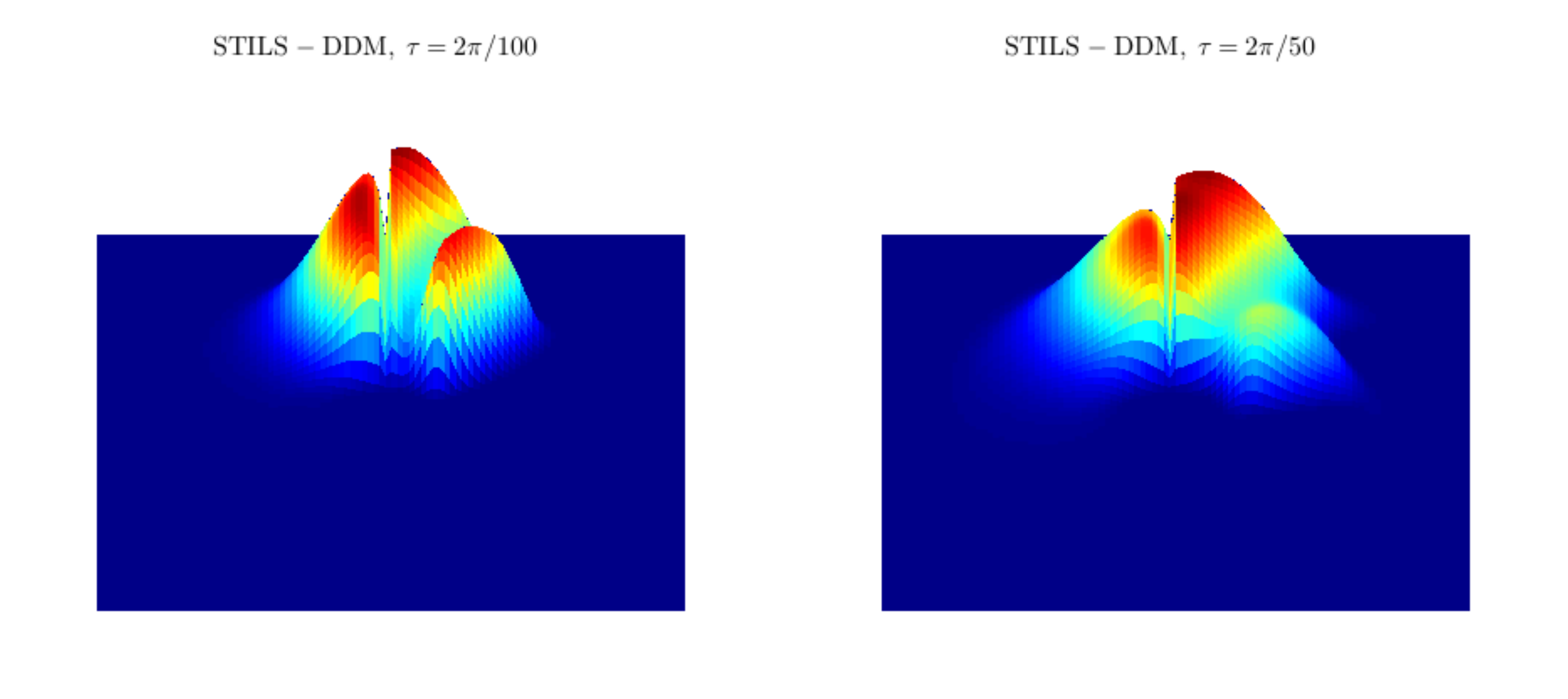}
\caption{STILS-DDM-TMQ1 numerical solutions at final time for different values of $\tau.$}
\label{hans8}
\end{center}
\end{figure}
 As showed on the table $7.1$ and fig. \ref{hans7} considering a fixed value of $h=2/100$ the global conservativity for each scheme decreases (relative variation increases) when $\tau$ increases, with a light advantage of STILS. In the STILS-DDM case this is illustrated by a deterioration of the shape of the slot at final time (see fig. \ref{hans8}).
 \subsubsection{Influence of the discretization parameter $h$} 
The same remarks are observed when $h$ takes enough high values and $\tau=2\pi/800,$ then the global conservativity decreases (see table $7.2$ and $7.3$, fig. \ref{hans9}.)
 \begin{table}[H]
\begin{center}
     \begin{tabular}{|l|l|c|r|c|c|c|c|c|c|c|c|clclclclclclclclclcl}
        \hline  
   Number of   &Number of&Numb of iterations&(min,max)&relative variation\\
   elements  &nodes&$\le$&STILS-DDM&STILS-DDM
  \\
       \hline
$1600$&$1681$&$3$&$(-0.25,1.20)$&$1.2\;10^{-2}$
 \\
       $2500$&$2601$&$3$&$(-0.20,1.30)$&$0.89\;10^{-3}$
     \\ 
      $3600$&3721& $4$&$(-0.18,1.26)$&$1.17\;10^{-3}$
    \\ 
     $10000$&$10201$&$4$&$(-0.23,1.21)$&$7.44\;10^{-4}$
       \\ 
      $40000$&$40401$&$5$&$(-0.19,1.23)$&$4.06\;10^{-5}$
      \\ 
        \hline
\end{tabular}
 \end{center}    
 \label{tab2} 
  \caption{Numerical results of STILS-DDM-TMQ1 respecting on $h$}
 \end{table}
\begin{table}[H]
\begin{center}
      \begin{tabular}{|l|l|c|r|c|c|c|c|c|c|c|c|clclclclclclclclclcl}
         \hline  
   Number of   &Number of&(min,max)&relative variation\\
   elements  &nodes&STILS&STILS
   \\
       \hline
$1600$&$1681$&$(-0.33,1.21)$&$2.76\;10^{-2}$
 \\
       $2500$&$2601$&$(-0.17,1.28)$&$5.99\;10^{-3}$
     \\ 
     $3600$&3721& $(-0.21,1.23)$&$0.98\;10^{-4}$
     \\ 
      $10000$&$10201$&$(-0.19,1.20)$&$1.44\;10^{-4}$
       \\ 
      $40000$&$40401$&$(-0.15,1.20)$&$1.87\;10^{-5}$
      \\ 
         \hline
\end{tabular}
 \end{center}    
 \label{tab3} 
  \caption{Numerical results of STILS-TMQ1 respecting on $h$}
 \end{table}
\begin{figure}[H]
\begin{center}
\includegraphics[width=8.cm]{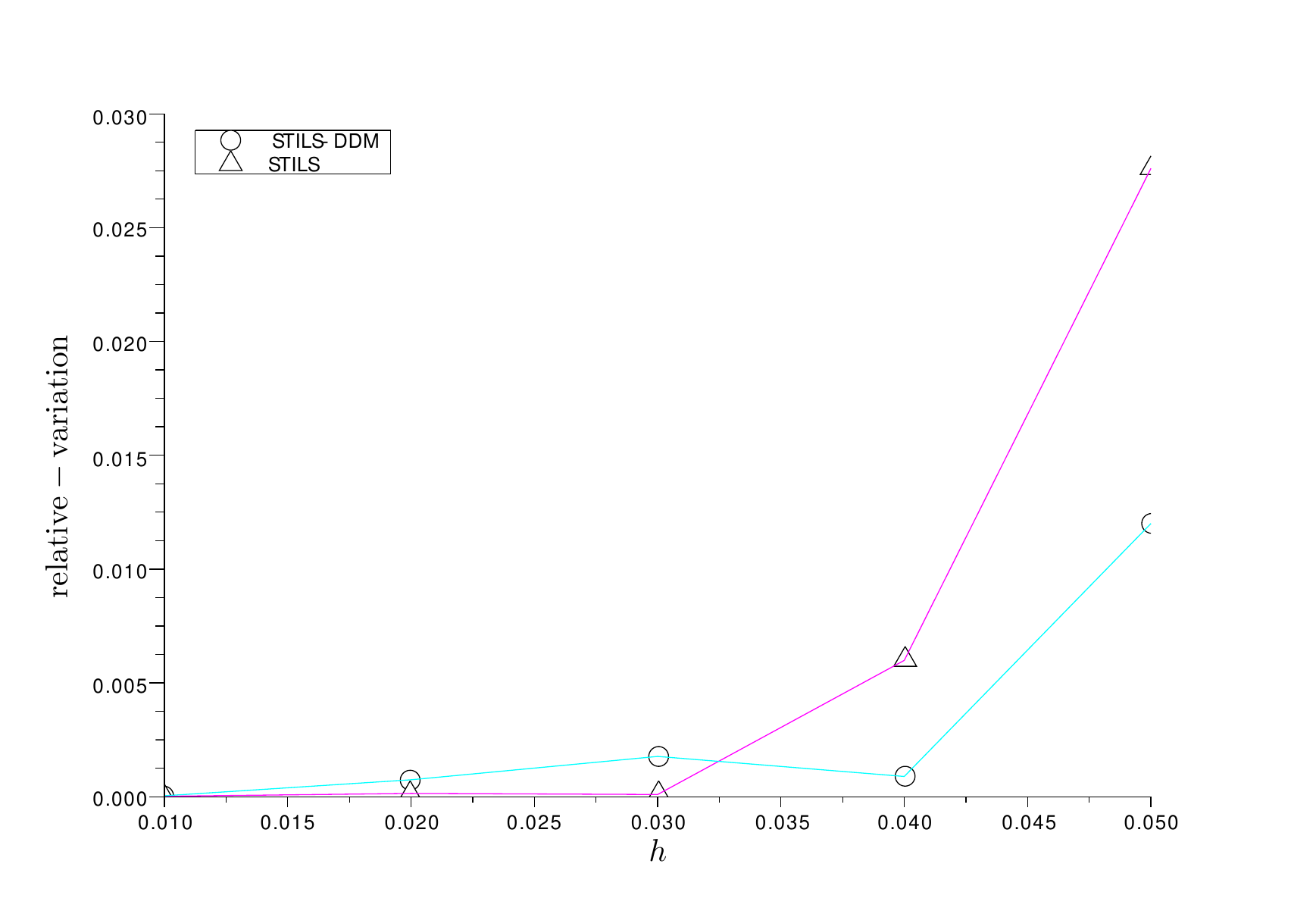} 
\caption{Conservativity respecting on $h$}
\label{hans9}
\end{center}
\end{figure}
Finally one remarks also that the two schemes induce some small  overshootings and undershootings which are nearly the same in the two cases. This due to the fact that the TM scheme is not conservative nor even TVD. At end it can be to remark that the maximal number of iterations for the STILS-DDM algorithm to obtain convergence
 varies very little with $\tau$ or $h$ (see table $7.1$, and $7.2$).\\
 
These numerical results lead to the following remark
\begin{remark}
Experimentally, in oder to product a good quality of the solution, it's necessary to have $\frac{\tau}{h}<\frac{1}{2}.$ It's look like an unstable condition while the time-marching approach of STILS is unconditionally stable see \cite{montm} .\\
When $\frac{\tau}{h}>\frac{1}{2}$ there is a phenomenon look like a numerical diffusion.
\end{remark}

\section{Conclusion}
Using the STILS method a new space-time domain decomposition algorithm to solve scalar conservation laws is presented.
A convenient way to analyze numerically this algorithm is to  discretized separately by finite element methods the space and time dimensions. The different grid parameters are very influent on the convergence of the iterative scheme. The Hansbo example illustrates well the results of this work i.e to approximate the STILS problem by a domain decomposition method. Higher orders and all sort of combinations with the techniques in Galerkin variants are possible for the proposed iteration-by-subdomain scheme. 
\section*{Acknowledgments}
I wish to express my sincere appreciation and heartfelt gratitude to Prof. O. Besson for his suggestions and the helpful discussions concerning the  subject of this work.

\end{document}